\theoremstyle{plain}
\newtheorem{theorem}{Theorem}[section]
\newtheorem*{theorem*}{Theorem}
\newtheorem{lemma}[theorem]{Lemma}
\newtheorem*{lemma*}{Lemma}
\newtheorem{corollary}[theorem]{Corollary}
\newtheorem{proposition}[theorem]{Proposition}
\newtheorem*{proposition*}{Proposition}
\newtheorem*{conjecture*}{Conjecture}
\theoremstyle{remark}
\newtheorem{remark}[theorem]{Remark}
\theoremstyle{definition}
\newtheorem{definition}[theorem]{Definition}
\numberwithin{equation}{section}
\renewcommand\labelenumi{(\roman{enumi})}
\renewcommand\theenumi\labelenumi
\DeclareMathOperator{\QQ}{\mathbf{Q}}
\DeclareMathOperator{\Gal}{Gal}
\DeclareMathOperator{\pr}{pr}
\DeclareMathOperator{\Spec}{Spec}
\DeclareMathOperator{\Frac}{Frac}
\newcommand{\F}{\mathscr{F}}
\newcommand{\Sch}{\mathsf{Sch}}
\newcommand{\Reg}{\mathsf{Reg}}
\newcommand{\Dvr}{\mathsf{Dvr}}
\newcommand{\Shdvr}{\mathsf{Shdvr}}
\newcommand{\Zar}{\textnormal{Zar}}
\newcommand{\Nis}{\textnormal{Nis}}
\newcommand{\et}{\textnormal{\'{e}t}}
\newcommand{\cdp}{\textnormal{cdp}}
\newcommand{\qfh}{\textnormal{qfh}}
\newcommand{\h}{\textnormal{h}}
\newcommand{\eh}{\textnormal{eh}}
\newcommand{\cdh}{\textnormal{cdh}}
\newcommand{\rh}{\textnormal{rh}}
\newcommand{\fin}{\textnormal{fin}}
\newcommand{\prop}{\textnormal{prop}}
\newcommand{\dvr}{\textnormal{dvr}}
\newcommand{\shdvr}{\textnormal{shdvr}}
\newcommand{\sh}{\textnormal{sh}}
\newcommand{\ess}{\mathsf{ess}}
\DeclareMathOperator{\Frob}{F}
\DeclareMathOperator{\Ver}{V}
\DeclareMathOperator{\tor}{tor}
\newcommand{\U}{\mathfrak{U}}
\newcommand{\OO}{\mathscr{O}}
\newcommand{\sbt}{\,\begin{picture}(-1,1)(-1,-2)\circle*{2}\end{picture}\ }
\title{ Witt differentials in the h-topology}
\thanks{2010\,\emph{Mathematics Subject Classification}.
 14F40 (primary); 13F35 (secondary)}
\author{Veronika Ertl}
\address{ Universit\"at Regensburg, Fakult\"at f\"ur Mathematik Universit\"atsstrasse 31, 93053 Regensburg, Germany}
\email{{veronika.ertl@mathematik.uni-regensburg.de}}
\author{Lance Edward Miller}
\address{Department of Mathematical Sciences, 309 SCEN, University of Arkansas,
Fayetteville, AR, 72701}
\email{{lem016@uark.edu}}
\begin{document}

\maketitle

\begin{abstract} 
For sheaves of differential forms of the de~Rham--Witt complex for regular varieties over a perfect field of positive characteristic $p$ we prove  unconditional descent in cohomological dimension $0$ with respect to Voevodsky's $\h$-topology. Under resolution of singularities we obtain full cohomological descent. Our approach follows recent work of Huber--J\"order and Huber--Kebekus--Kelly on sheaves of differential forms.
\end{abstract}

\section{Introduction}

The idea to use differential forms to obtain numerical invariants of algebraic varieties dates back to Picard and Lefschetz. Meanwhile sheaves of differential forms have become an important tool in the study of local and global properties of algebraic varieties and schemes. However it is well-known that they are not well behaved in the singular case even in characteristic $0$. To get around the problems which occur here, several competing generalizations of differential forms and of the de~Rham complex have been proposed. Examples include K\"ahler differentials in the case of a variety which is embeddable into a smooth variety, reflexive differentials for normal varieties,  logarithmic differentials in the case a divisor is considered,  Deligne's de~Rham complex using proper hypercovers, or the Du~Bois complex. 

In \cite{HuberJoerder2014} Huber and J\"order introduce another player using Voevodsky's $\h$-topology \cite{Voevodsky1996}. It turns out that in characteristic $0$ the $\h$-sheafification of the sheaves $\Omega^n$, $n\geqslant 0$, of differential forms provides a conceptual extension to singular varieties which recovers  the above variants in many situations. This approach is built on the intuition that all varieties over a field of characteristic $0$ are $\h$-locally smooth.

In  characteristic $p>0$ the $\h$-topology encounters a number of challenges. For example, one quickly notes that the basic K\"ahler differentials become zero under $\h$-sheafification. This is due to the fact that the Frobenius map is an $\h$-cover.  However, it is possible to circumvent these difficulties to some extent with  more subtle  sheaf theoretic methods \cite{HuberKebekusKelly2016}.

In positive characteristic, one is often led to consider $p$-adic cohomolgy theories instead. If a variety over a perfect field of  characteristic $p$ lifts to characteristic $0$, the first choice is to consider  the differential forms of this lift as described above. If a variety lifts only locally however, one often turns towards crystalline cohomology or one of its variants. In \cite{Illusie1979} Illusie introduced a complex of \'etale sheaves, called the de~Rham--Witt complex, which computes crystalline cohomology on smooth schemes.  

The hope is to extend the program described in \cite{HuberJoerder2014} and \cite{HuberKebekusKelly2016} to the de~Rham--Witt complex in order to obtain an equally conceptual approach to the study of singular varieties in characteristic $p$. In this article, we start by studying the $\h$-sheafification of the rational sheaf of Witt differential forms $W\Omega^n_{\QQ}$ of degree $n\geqslant 0$. This  extends  the  Witt differential forms to singular varieties analogous to Huber--J\"order's method described above. 

To lay the base for this, we are   especially interested in descent results for rational Witt differentials. An optimal result would be a cohomological descent statement analogous to \cite[Cor.~6.5]{HuberJoerder2014}. This appears to be well-known  to experts if one assumes resolution of singularities in positive characteristic. We give a detailed proof for this in Section~\ref{sec:res}. Without the assumption of resolution of singularities such a statement is only known for Serre's Witt vector cohomology \cite{BhattScholze2015}. For Witt differential forms of arbitrary degree full cohomological descent without resolutions of singularities remains an unsurmountable challenge. 

However, using  techniques from \cite{HuberKebekusKelly2016} we were able to obtain the following descent result for Witt differentials of any degree without resolution of singularities.

\begin{theorem*}[Thm.~\ref{dRWSmHDescent}]
Let $k$ be a perfect field of positive characteristic $p$. For any regular scheme $X$ over $k$, the change of topology map induces quasi-isomorphisms 
	$$
	W\Omega^n_{\QQ}(X)\cong W\Omega^n_{\QQ,\h}(X)
	$$ 
for all $n\geqslant 0$.
\end{theorem*}
The proof relies on a construction introduced in \cite{HuberKebekusKelly2016} which allows one to extend  a presheaf $\F$ on regular schemes to a presheaf $\F_{\dvr}$ on arbitrary schemes. We study it closer in Section~\ref{sec:dvr}.

An important insight by Huber, Kebekus and Kelly is that this construction extends sheaves for topologies coarser than the \'etale topology. More precisely, if $\F$ is for example an \'etale sheaf on regular schemes, $\F_{\dvr}$ is an $\eh$-sheaf on $\Sch(k)$. In Corollary~\ref{cor:qfhSheaf}  we show a similar result for the $\qfh$-topology. Concretely, this means, that in order to show that $\F_{\dvr}$ is a $\qfh$-sheaf, it suffices to show that $\F$ is a $\qfh$-sheaf on a certain smaller category. Following a modification of the approach in \cite{HuberKebekusKelly2016} suggested to us by Kelly, we obtain the following. 

\begin{proposition*}[Prop.~\ref{prop:qfhDescent}]
For a perfect field $k$ of characteristic $p > 0$, the extension $W\Omega^n_{\QQ,\dvr}$ is a $\qfh$-sheaf on $\Sch(k)$.
\end{proposition*}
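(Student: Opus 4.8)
The strategy is to invoke the abstract extension criterion from Huber--Kebekus--Kelly, in the form upgraded to the $\qfh$-topology in Corollary~\ref{cor:qfhSheaf}. That result reduces the problem to checking that $W\Omega^n_{\QQ}$, viewed on the relevant small category of regular schemes (essentially regular schemes that are localizations of smooth $k$-schemes, or perhaps just regular affine schemes together with their $\dvr$-type local data), is a $\qfh$-sheaf there. So the first step is to set up notation carefully and state precisely which presheaf on which category we must verify the sheaf condition for, and to recall from Section~\ref{sec:dvr} the exact hypotheses of Corollary~\ref{cor:qfhSheaf} so that the reduction is clean.

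Next I would verify the two types of covers that generate the $\qfh$-topology. Recall that $\qfh$-covers are generated by Nisnevich (indeed Zariski or \'etale) covers together with proper quasi-finite (``finite'') surjective morphisms, plus blow-ups. For the \'etale/Nisnevich part, $W\Omega^n_{\QQ}$ is already an \'etale sheaf on regular (in fact on all smooth) $k$-schemes by Illusie's construction, so descent there is known. The substantive content is therefore finite surjective descent: given a finite surjective morphism $Y \to X$ of regular schemes (or more generally within the small category under consideration), one must show $W\Omega^n_{\QQ}(X)$ is the equalizer of $W\Omega^n_{\QQ}(Y) \rightrightarrows W\Omega^n_{\QQ}(Y\times_X Y)$. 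The key input is rationality: after inverting $p$, one can use a trace argument. For a finite surjective map of regular schemes, generically of degree $d$, the normalized trace $\frac{1}{d}\operatorname{Tr}_{Y/X}$ provides a retraction of $W\Omega^n_{\QQ}(X) \to W\Omega^n_{\QQ}(Y)$; since $p$ is invertible and the de~Rham--Witt complex is well-behaved under finite extensions in the regular case, this shows injectivity of the restriction and identifies the image with the Galois/symmetric invariants, which is exactly the equalizer condition.

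The step I expect to be the main obstacle is precisely this trace/transfer argument for $W\Omega^n_{\QQ}$: one needs a well-defined, functorial trace map $\operatorname{Tr}_{Y/X}\colon W\Omega^n(Y) \to W\Omega^n(X)$ for finite surjective $Y \to X$ in the relevant category, satisfying the projection formula and the property that $\operatorname{Tr}_{Y/X}\circ \mathrm{res}_{Y/X}$ is multiplication by the degree. For finite \'etale maps this is classical (transfer in \'etale cohomology, or Illusie's construction), but $\qfh$-covers force one to handle finite surjective maps that are ramified or non-flat; here one must reduce to the \'etale (or generically \'etale, then normalized) situation, possibly using that the schemes in the small category over which we check the sheaf condition are regular so that ramification is controlled, and that we only need the statement after tensoring with $\QQ$. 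Once the trace is in hand, the equalizer computation is the standard ``invariants under the covering'' argument and the remaining blow-up/cdh-type covers are handled by the already-established (or hypothesis-covered) birational behaviour. Finally I would assemble these pieces through Corollary~\ref{cor:qfhSheaf} to conclude that $W\Omega^n_{\QQ,\dvr}$ is a $\qfh$-sheaf on all of $\Sch(k)$.
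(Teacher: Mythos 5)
Your opening reduction via Corollary~\ref{cor:qfhSheaf} is the right move, but you should pin down the small category: by Lemma~\ref{lem:NewHKK4.4} one identifies $\F_{\dvr}$ with $\F_{\shdvr}$ and checks the sheaf condition on $\Shdvr(k)$, the strict henselisations of discrete valuation rings. There, by Lemma~\ref{lem:slocNormalForm} (via Suslin--Voevodsky), every $\qfh$-cover refines to a Zariski cover of the normalization $V \to X$ in a finite normal extension of the function field, so the whole problem does come down to a single finite surjective morphism of strictly henselian dvrs, as you anticipate.

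The genuine gap is in how you treat that finite morphism. In characteristic $p$ a finite surjective map need not be generically \'etale: it can be generically purely inseparable --- the Frobenius itself is a $\qfh$-cover, which is exactly why $\Omega^n_{\h}=0$ in positive characteristic. Your plan reduces everything ``to the \'etale (or generically \'etale, then normalized) situation'' and then applies a normalized trace $\frac{1}{d}\operatorname{Tr}_{Y/X}$; but for a purely inseparable cover the two projections from $\widetilde V$ agree after reduction, so the equalizer condition degenerates to asking that $W\Omega^n_{\QQ}(X)\to W\Omega^n_{\QQ}(V)$ be \emph{surjective}, and no averaging or retraction argument produces that (the field trace of an inseparable extension kills everything outside the base). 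The paper instead factors $\pi$, after a further finite cover, into a generically purely inseparable map followed by a generically Galois one. The Galois part is handled by Galois descent for the \'etale sheaf $W\Omega^n_{\QQ}$ at the generic point together with a lifting step over the valuation ring using the exact sequences of \cite[I.3.15]{Illusie1979} --- a step your ``the de~Rham--Witt complex is well-behaved under finite extensions'' glosses over. The inseparable part is reduced, via Koll\'ar's factorization, to the relative Frobenius, where the essential Witt-specific input is $\Frob\Ver=\Ver\Frob=p$, making $\Frob$ surjective on $W\Omega^n_{\QQ}$ after inverting $p$. This Frobenius--Verschiebung mechanism is the heart of the proposition and is absent from your proposal; without it (or a genuinely new construction of transfers for inseparable finite maps on the de~Rham--Witt complex), the argument does not close.
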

The proof of this explicitly uses properties of Witt differentials, notably the existence of a Frobenius and a Verschiebung map. As $\eh$- and $\qfh$-topology generate the $\h$-topology, this result implies that $W\Omega^n_{\QQ,\dvr}$ is an $\h$-sheaf on $\Sch(k)$. 

Another ingredient for our main theorem  is Corollary~\ref{Cor:Torsion} which states that if $\F_{\dvr}$ is an $\h$-sheaf, then it has no topological torsion. In particular this is true for $W\Omega^n_{\QQ,\dvr}$. It is applied in the last section to show that the natural map $W\Omega_{\QQ,\h}^n(X) \to W\Omega_{\QQ,\dvr}^n(X)$ is injective. Our main result then follows by a diagram chase. 

To finish, we draw several expected consequences from the $\h$-descent. This includes analogues for the rational  Witt differentials of results from \cite{HuberJoerder2014} on K\"ahler differentials (see Theorem~\ref{prop:BasicProperties}). 

\subsection*{Acknowledgements}
 We are indebted to Annette Huber, without her this paper would not exist. Special thanks are due to  Shane Kelly for inspiring discussions and suggesting the approach which lead to Lemmas~\ref{lem:slocNormalForm} and \ref{lem:NewHKK4.4}.
We benefited from many helpful discussions with Bhargav Bhatt, Kei Hagihara, Lars Hesselholt, Marc Hoyois, and David Zureick-Brown. We also thank the referee of an earlier version of this paper for pointing out numerous improvements both mathematically and to the exposition. 
The first named author is grateful to Wiesia Nizio\l{} for continued support and encouragement. Finally, we would  like to thank the University of Freiburg, and our home institutions for making key visitations possible. 

\section{Preliminaries and notation}

Throughout, all schemes are assumed to be separated and any reference to a base scheme $S$ assumes that $S$ is Noetherian. For a fixed base scheme $S$ we denote by $\Sch(S)$ the category of schemes of finite type over $S$. When $S = \Spec R$ is affine, we write $\Sch(R)$ for $\Sch(S)$. We denote by $\Reg(S)$ the full subcategory of $\Sch(S)$ of regular schemes, by $\Sch(S)^{\ess}$ the category of schemes essentially of finite type,  by $\Dvr(S)$ the category of schemes essentially of finite type which are regular, local, and of dimension at most $1$, and by $\Shdvr(S)$ the category of schemes of the form $\Spec R^{\sh}$ where $\Spec R$ is in $\Dvr(S)$ and $R^{\sh}$ denotes its strict henselization. 

\subsection{Grothendieck topologies}

A Grothendieck topology on $\Sch(S)$ (or one of the other categories introduced above) is defined by identifying which families of morphisms in $\Sch(S)$ are to be considered as open covers. For a Grothendieck topology $\tau$ denote by $\Sch(S)_{\tau}$ the associated site of $\Sch(S)$. A presheaf $\F$ is a sheaf for the topology $\tau$, if it satisfies the sheaf condition for every covering family in $\Sch(S)$. Moreover, one can associate to a presheaf $\F$ a unique $\tau$-sheaf $\F_\tau$. If $\F$ is already a $\tau$-sheaf, they coincide.

For two Grothendieck topologies $\tau$ and $\mu$ on $\Sch(S)$ one calls $\tau$ {\bf finer} than $\mu$ if every $\mu$-cover is a $\tau$-cover. We denote this relationship by $\mu \rightarrow \tau$. It induces a morphism of associated sites 
	$$ 
	\rho \colon \Sch(S)_{\tau} \to \Sch(S)_{\mu}
	$$ 
and hence a morphism of topoi, called the change of topology map and denoted by
	$$
	(\rho^\ast,\rho_\ast): \widetilde{\Sch(S)}_\tau\rightarrow \widetilde{\Sch(S)}_\mu.
	$$
 Furthermore, we say that a $\mu$-sheaf satisfies {\bf $\tau$-descent} for $X\in\Sch(S)$ if the change of topology map induces an isomorphism
	$\F(X) \cong \F_{\tau}(X).$ 
We say that $\F$ satisfies {\bf cohomological $\tau$-descent} for $X$ if the change of topology map induces isomorphisms
	$$
	H^i_{\mu}(X, \F) \cong H^i_{\tau}(X, \F_{\tau})\quad\text{ for all $i \geqslant 0$}.
	$$ 
For a $\mu$-sheaf $\F$ on $\Reg(S)$, we say that it satisfies {\bf regular $\tau$-descent} provided $\F_\tau(X)=\F(X)$ for all $X\in \Reg(k)$.  In this paper, we are mostly interested in the case where $\mu$ is the Zariski topology. 

We introduce now several Grothendieck topologies we will be working with. They are variants of Voevodsky's $\h$-topology introduced in \cite{Voevodsky1996}.  

\begin{definition}
A morphism in $\Sch(S)$ is called a {\bf topological epimorphism}, if it is surjective and the Zariski topology of the target is the quotient topology of the Zariski topology of the source. It is called universal if this property is preserved by any base change. The {\bf $\h$-topology} is the Grothendieck topology on $\Sch(S)$ with coverings universal topological epimorphisms of finite type.
\end{definition}

The $\h$-topology is the coarsest topology finer than the proper and Zariski topology. It is also finer than the \'etale topology. Several intermediate topologies will appear which are generated by $\cdp$-morphisms and some other variant of covering families. A $\cdp$-morphism is a completely decomposed proper morphism, i.e., a proper morphism such that each $x \in X$ has a preimage $u$ with $[k(u) \colon k(x)] = 1$.  The topology generated by  $\cdp$-covers and Zariski covers is called the $\rh$-topology. If we refine the topology by allowing in addition Nisnevich covers we obtain the $\cdh$-topology. The $\eh$-topology is generated by \'etale covers and $\cdp$-morphisms.  A central topology for our considerations is the $\qfh$-topology which is generated by \'etale covers and finite covers. 

It will be helpful to keep the following diagram in mind which illustrates how the topologies mentioned in this paragraph relate to each other. For a detailed discussion of these and other topologies we refer to \cite{GabberKelly2015} especially \cite[Def.~2.5 and Diag.~(6)]{GabberKelly2015}.
	$$
	\xymatrix{&&&& \fin \ar[d]\ar[r] & \prop\ar[dd]\\ 
	&\Zar \ar[d]\ar[r] & \Nis \ar[d]\ar[r] & \et \ar[d]\ar[r] & \qfh \ar[dr] & \\
	\cdp \ar[r] & \rh \ar[r] & \cdh\ar[r] & \eh \ar[rr]&& \h}
	$$
A special form of $\cdp$-morphism is an abstract blow-up. 

\begin{definition}
Let $X\in \Sch(S)$, $f:X'\rightarrow X$ a proper map and $Z\subset X$ a closed subscheme with preimage $E\subset X'$. We say that $(X',Z)$ is an {\bf abstract blow-up of $X$}, if $f$ induces an isomorphism $X'\backslash E\xrightarrow{\sim} X\backslash Z$ outside $Z$. We denote an abstract blow-up either by $(X'\xrightarrow{f} X, Z\xrightarrow{i} X)$ or more simply by $(X', Z)$. 
\end{definition}

The $\cdp$-topology is in fact generated by abstract blow-ups. Thus the $\rh$-, $\cdh$- and $\eh$-topologies are generated by abstract blow-ups and Zariski, Nisnevich or \'etale coverings respectively, which becomes evident in certain factorizations of covers (e.g. \cite[Thm.~3.1.1]{Voevodsky1996}). If one has resolution of singularities one can even replace abstract blow-ups by blow-ups along smooth centers \cite{CortinasHasemeyerSchlichtingWeibel2005}. 

As abstract blow-ups are $\h$-maps, we see that all proper birational maps are $\h$-covers. Hence, if resolution of singularities holds, all schemes  are $\h$-locally smooth.  But this is even true over any field without the assumption of resolution of singularities by de Jong's alteration theorem, because finite maps and in particular alterations are $\h$-morphisms.

\subsection{Sheafifications of differential forms}

The work of Huber--J\"order and Huber--Kebekus--Kelly describes descent for differential forms. For a fixed field $k$ denote by $\Omega^1$ the presheaf of K\"ahler differential forms on $\Sch(k)$ and for each positive integer $n$ denote by $\Omega^n$ its $n$-fold exterior power. Assume $k$ has characteristic $0$. For the relationships $\cdh \to \eh \to \h$ of topologies, one has descent isomorphisms
	$$
	 \Omega_{\cdh}^n(X)\cong \Omega_{\eh}^n(X) \cong \Omega_{\h}^n(X)
	$$ 
by \cite[Cor.~2.8]{HuberJoerder2014} and  \cite[Thm.~3.6]{HuberJoerder2014} for any  $X\in \Sch(k)$. If $X$ is in addition smooth, one has a descent isomorphism	
	$$
	\Omega^n(X) \cong \Omega_{\eh}^n(X)
	$$ 
by \cite[Thm.~4.7]{Geisser2006} which depends on resolution of singularities. Thus in characteristic $0$ the $\h$-sheafification of K\"ahler differentials agrees with the usual ones in the smooth case. For a klt-base space $X$, one also has an isomorphism  
	$$
	\Omega_{\h}^n(X) \cong (\Omega^{[n]}(X))
	$$ 
between $\h$-differential forms and the push forward of usual differential forms from the smooth locus, called reflexive differentials, \cite[Thm.~5.4]{HuberJoerder2014}. 

When $k$ is of characteristic $p > 0$ the situation is more complicated. For example, an $\h$-sheafification $\F_{\h}$ of a non-trivial presheaf $\F$ can be zero.  This happens in particular for the sheaf of K\"ahler differentials. However some descent statements can be recovered at least for the $\cdh$-topology  via an auxiliary sheaf constructed using the right Kan extension along the inclusion $\Reg(k)\rightarrow \Sch(k)$ denoted $(-)_{\dvr}$. Notably, when $X \in \Reg(S)$ one has $\F_{\dvr}(X) = \F(X)$ and when $k$ has characteristic $0$ one can show 
	$$
	\Omega^n_{\dvr} \cong \Omega_{\h}^n
	$$ 
using \cite{HuberJoerder2014}. For a perfect field $k$ of any characteristic one has isomorphisms
	$$
	\Omega^n(X)\cong  \Omega^n_{\rh}(X) \cong \Omega^n_{\cdh}(X) \cong \Omega^n_{\eh}(X) \cong \Omega^n_{\dvr}(X)
	$$
for a $X\in \Reg(k)$ \cite[Thm.~5.11]{HuberKebekusKelly2016}.

\subsection{The de~Rham--Witt complex} 

Fix a perfect field $k$ of characteristic $p>0$. For a scheme $X$ over $k$, denote by $W \Omega^{\sbt}_X$ the $p$-typical de~Rham--Witt complex  defined by Illusie \cite{Illusie1979}. Set $W \Omega^{\sbt}_{X,\QQ} := W \Omega^{\sbt}_X \otimes \QQ$. It is a complex of \'{e}tale sheaves. For a fixed integer $n$, we consider the sheaf $ W \Omega^{n}_{\QQ}$  of differentials of degree $n$ of the de~Rham--Witt complex.  For short, we refer to it as the sheaf of rational Witt differentials. For a topology $\tau$ on $\Sch(k)$, finer than the \'{e}tale topology we denote as above  by $W \Omega^n_{\QQ,\tau}$ the $\tau$-sheafification which we call {\bf $\tau$-Witt differentials}.

\section{Cohomological descent for Witt differentials assuming resolution of singularities}\label{sec:res}

In this section fix a perfect field $k$ of characteristic $p>0$ and assume strong resolution of singularities. By this we mean that integral and separated schemes over $k$ have smooth proper birational covers and for each proper birational cover $f \colon Y \to X$ where $X$ is smooth, there is a sequence of blow-ups along smooth centres $X_n \to \cdots \to X_1 \to X$ such that $X_n \to X$ factors through $f$. This is the {\it only} section which relies on resolution of singularities. 

Under these assumptions we prove cohomological $\h$-descent for $W\Omega^n_{\QQ}$ on a smooth $k$-variety $X$, i.e., for any $i,n\geqslant 0$ one has 
	$$
	H^i_{\Zar}(X,W\Omega^n_{\QQ}) \cong H^i_{\h}(X,W\Omega^n_{\QQ,\h}).
	$$ We break this up into two steps. The first one is to obtain an (integral) $\eh$-descent under the same assumptions.  

We will use the following lemma which compares \v{C}ech and derived functor cohomology of quasi-coherent sheaves which should not be surprising. We call a $\tau$-sheaf $\F$ for  a Grothendieck topology $\tau$ finer than the Zariski topology quasi-coherent, if  the restriction of $\F$ to all small Zariski sites is quasi-coherent. 
	
\begin{lemma}\label{Lem:Cech}
Let $X$ be a smooth $k$-variety and $\F$ a presheaf on $\Reg(k)$. 
	\begin{enumerate}
	\item If $\F$ is a quasi-coherent \'etale sheaf, its \'etale sheaf cohomology is computed by \v{C}ech cohomology, 
		$$
		H^i_{\et}(X,\F)\cong\varinjlim_{\mathfrak{U}\in\et(X)}\check{H}^i(\mathfrak{U},\F).
		$$ 
	\item If $\F$ is a quasi-coherent $\cdh$-sheaf which satisfies $\cdh$-cohomological descent on $\Reg(k)$, then its $\cdh$-sheaf cohomology is computed by \v{C}ech cohomology, 
		$$
		H^i_{\cdh}(X,\F)\cong\varinjlim_{\mathfrak{U}\in\cdh(X)}\check{H}^i(\mathfrak{U},\F).
		$$
	\item  If $\F$ is a quasi-coherent $\eh$-sheaf which satisfies \'etale and $\cdh$-cohomological descent, then its $\eh$-sheaf cohomology is computed by \v{C}ech cohomology, i.e., 
		$$
		H^i_{\eh}(X,\F)\cong\varinjlim_{\mathfrak{U}\in\eh(X)}\check{H}^i(\mathfrak{U},\F).
		$$
	\end{enumerate}
\end{lemma}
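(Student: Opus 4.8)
The plan is to prove all three parts by the same mechanism: a $\tau$-sheaf cohomology that agrees with its $\check{\text{C}}$ech cohomology precisely when the higher $\check{\text{C}}$ech cohomology groups of every covering vanish in positive degree on the members of the cover, which for quasi-coherent sheaves is controlled by the affineness and quasi-coherence hypotheses together with a Cartan--Leray spectral sequence argument. Concretely, for any Grothendieck topology $\tau$ finer than the Zariski topology there is a spectral sequence $\check{H}^p(\mathfrak{U}, \underline{H}^q_\tau(\F)) \Rightarrow H^{p+q}_\tau(X,\F)$ associated to a covering $\mathfrak{U}\to X$, where $\underline{H}^q_\tau(\F)$ is the presheaf $Y\mapsto H^q_\tau(Y,\F)$. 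Passing to the colimit over all refinements, it suffices to show that the colimit of the off-diagonal terms vanishes, i.e.\ that $\varinjlim_{\mathfrak{U}} \check{H}^p(\mathfrak{U}, \underline{H}^q_\tau(\F)) = 0$ for $q>0$. This is the standard route (cf.\ Artin's theorem that $\check{\text{C}}$ech and derived cohomology agree on schemes for quasi-coherent sheaves on the Zariski and \'etale sites).

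For part (i), the \'etale case, this is essentially classical: a quasi-coherent \'etale sheaf on a Noetherian scheme has the property that $H^q_{\et}$ restricted to affines vanishes for $q>0$ (by faithfully flat descent reducing to Zariski quasi-coherent cohomology on affines, which vanishes), so the fiber products appearing in any \'etale \v{C}ech nerve of an affine cover contribute nothing off the diagonal, and the spectral sequence degenerates in the limit. First I would recall or cite this (e.g.\ Milne or SGA4), then for parts (ii) and (iii) bootstrap: the hypothesis that $\F$ satisfies $\cdh$-cohomological descent on $\Reg(k)$ (respectively \'etale and $\cdh$-descent) is exactly what lets one rewrite $H^q_{\cdh}$ (resp.\ $H^q_{\eh}$) of the members of a cover in terms of the already-understood cohomology. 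For (ii) one uses that $\cdh$-covers are generated by Nisnevich covers and abstract blow-ups; on a smooth (hence regular) scheme, cohomological $\cdh$-descent combined with quasi-coherence forces $H^q_{\cdh}(V,\F)$ to agree with Nisnevich/Zariski quasi-coherent cohomology on affine $V$, which vanishes for $q>0$. For (iii) one factors $\eh$-covers into \'etale covers followed by $\cdh$-covers (using the factorization statements referenced in the excerpt, e.g.\ \cite[Thm.~3.1.1]{Voevodsky1996} and the remarks after it), and iterates a Cartan--Leray argument comparing $\eh$ to $\cdh$ to \'etale, at each stage invoking the relevant cohomological descent hypothesis to kill the higher terms.

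The key steps, in order, are: (1) set up the $\check{\text{C}}$ech-to-derived spectral sequence for a general topology finer than Zariski and reduce each statement to the vanishing of $\varinjlim_{\mathfrak{U}}\check{H}^p(\mathfrak{U},\underline{H}^q_\tau(\F))$ for $q>0$; (2) prove the \'etale case (i) by reducing higher quasi-coherent \'etale cohomology of affines to zero; (3) prove (ii) by using the $\cdh$-cohomological descent hypothesis and the generation of $\cdh$ by Nisnevich covers and abstract blow-ups to reduce to the Nisnevich/Zariski quasi-coherent vanishing, taking care that the members of a $\cdh$-cover of a smooth variety can be taken regular, so the hypothesis applies; (4) prove (iii) by the factorization of $\eh$-covers through \'etale and $\cdh$-covers and a two-step Cartan--Leray comparison using parts (i) and (ii).

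The main obstacle I expect is step (3)--(4): controlling the members of $\cdh$- and $\eh$-covers. An abstract blow-up of a smooth variety need not have regular source, so one cannot naively invoke $\cdh$-cohomological descent on $\Reg(k)$ for arbitrary blow-ups; one must use that in the colimit over refinements it is enough to work with covers whose members are regular (e.g.\ by refining further, using that $\cdh$-locally everything in sight can be assumed smooth under the standing resolution-of-singularities assumption of this section, or by a careful induction on dimension using the excision/blow-up square structure). Making this reduction precise --- ensuring the $\check{\text{C}}$ech nerve stays within $\Reg(k)$ where the descent hypotheses have content, and that the colimit is cofinal over such nice refinements --- is the delicate point, and is presumably where the smoothness of $X$ and the resolution hypothesis of Section~\ref{sec:res} are genuinely used.
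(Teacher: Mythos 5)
Your proposal is correct and follows essentially the same route as the paper: the \v{C}ech-to-derived (Cartan--Leray) comparison, vanishing of quasi-coherent cohomology on affines for (i), and the descent hypotheses plus the resolution-of-singularities refinement of $\cdh$- and $\eh$-covers by smooth affine members (via the generation of these topologies by Nisnevich/\'etale covers and \emph{smooth} blow-up squares) for (ii) and (iii). The delicate point you flag --- keeping the \v{C}ech nerve inside $\Reg(k)$ --- is exactly the one the paper resolves by citing Voevodsky and Geisser, and for (iii) the paper implements your ``two-step comparison'' concretely via a spectral sequence for composite covers from Kelly's thesis.
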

 \begin{proof}
 In the \'etale case it is clear that any $k$-variety  may be covered by affines such that all finite intersections are affine as well. But on affine schemes the higher \'etale sheaf  cohomology groups of a quasi-coherent  sheaf vanish. Hence by Leray's theorem the first statement follows. 
 
 We argue similarly in the $\cdh$-case. Under resolution of singularities, the $\cdh$-topology on $\Reg(k)$ is generated by Nisnevich covers and smooth blow-up squares \cite[Lem.~4.6]{Voevodsky2000}. In particular, any $\cdh$-cover in $\Reg(k)$ has a refinement by smooth affine schemes such that every finite intersection is again smooth and affine. By $\cdh$-descent and the Zariski case
 	$$
 	H^i_{\cdh}(\Spec A, \F_{\cdh})\cong H^i_{\Zar}(\Spec A,\F)=0
 	$$ 
for $i\geqslant 1$ and any smooth affine $k$-variety $\Spec A$. Again by Leray's theorem the second statement follows.

We come now to the case of $\eh$-cohomology. Similar to the $\cdh$-topology, the $\eh$-topology on $\Reg(k)$ is under resolution of singularities generated by \'etale covers and smooth blow-up squares \cite[Cor.~2.6]{Geisser2006}. Hence we may again refine an $\eh$-cover of $X\in\Reg(k)$ by smooth affine schemes $\mathfrak{U}\rightarrow X$ such that all intersections are again smooth affine. We compute the higher \v{C}ech cohomology groups for an arbitrary open affine subset $U\in \mathfrak{U}$. By \textit{loc. cit}  every $\eh$-cover of $U$ has under resolution of singularities a refinement of the form $\{V_i\rightarrow U'\rightarrow U\}$, where $\{V_i\rightarrow U'\}$ is an \'etale cover and $U'\rightarrow U$ is a $\cdh$-map with $U'\in\Reg(k)$. We may refine the cover further such that the $V_i$ are affine and assume without loss of generality that the \'etale cover $\{V_i\rightarrow U'\}$ is non-trivial.

By \cite[Lem.~3.4.3]{Kelly2012} we obtain a first quadrant spectral sequence converging to the \v{C}ech cohomology of the cover $\{V_i\rightarrow U'\rightarrow U\}$ where the entries on the $E_1$-sheet consist of the \v{C}ech cohomology groups associated to fibre products of the cover $\{V_i\rightarrow U'\}$
	$$
	E_1^{p,q}=\check{H}^q(V_{\sbt}^{\times_U^{p+1}}/{U'}^{\times_U^{p+1}},\F) \Rightarrow \check{H}^{p+q}(V_{\sbt}/U,\F),
	$$
where we denote by $V_{\sbt}^{\times_U^{p+1}}$  the $p+1$-fold fiber product $V_{\sbt} \times_U \cdots \times_U V_{\sbt}$. 
As we have seen above the left hand side is just \'etale cohomology of $\F$ on ${U'}^{\times_U^{p+1}}$ and hence by assumption Zariski cohomology of $\F$ on ${U'}^{\times_U^{p+1}}$. In fact, if we fix $U'$ and vary  $\{V_i\rightarrow U'\}$ the left-hand side does not change. By the theory of spectral sequences it follows that the same is true for the right hand side of the spectral sequence. Thus passing to the limit over covers $\{V_i\rightarrow U'\rightarrow U\}$ where $\{V_i\rightarrow U\}'$ is a Zariski cover gives the same result as  passing to the limit over covers $\{V_i\rightarrow U'\rightarrow U\}$ where $\{V_i\rightarrow U'\}$ is an \'etale cover, with $U'\in\Reg(k)$ in both cases.

Note that taking $\{V_i\rightarrow U'\}$  to be  a Zariski cover makes $\{V_i\rightarrow U'\rightarrow U\}$ into a $\cdh$-cover. 
	\begin{eqnarray*}
	\varinjlim_{\mathfrak{V}\in\eh(U)}\check{H}^i(\mathfrak{V}/U,\F) & \cong & \varinjlim_{U'\in\cdh(U)} \varinjlim_{\mathfrak{V}\in\et(U')}\check{H}^i(\mathfrak{V}/U',\F)\\
	& \cong & \varinjlim_{U'\in\cdh(U)} \varinjlim_{\mathfrak{V}\in\Zar(U')}\check{H}^i(\mathfrak{V}/U',\F) \\
	& \cong & \varinjlim_{\mathfrak{V}\in\cdh(U)}\check{H}^i(\mathfrak{V}/U,\F),
	\end{eqnarray*}
	with $U'\in\Reg(k)$. We combine this with
	$$
	\varinjlim_{\mathfrak{V}\in\cdh(U)}\check{H}^i(\mathfrak{V}/U,\F)\cong H^i_{\cdh}(U,\F) \cong H^i_{\Zar}(U,\F),
	$$
where the first equality is (ii) and  the second one comes from the hypothesis that $\F$ satisfies cohomological $\cdh$-descent.  Since $\F$ is quasi-coherent and $U$ affine, this last cohomology group vanishes for $i>0$, in other words $\varinjlim_{\mathfrak{V}\in\eh(U)}\check{H}^i(\mathfrak{V}/U,\F)=0$ for $i>0$ and each $U\in\mathfrak{U}$. Cartan's theorem \cite[Thm.~5.9.2]{Godement1958} which applies in our context mutatis mutandis, implies now that the natural morphisms 
	$$
	\varinjlim_{\mathfrak{U}\in\eh(X)}\check{H}^i(\mathfrak{U}/X,\F)\xrightarrow{\sim} H^i_{\eh}(X,\F)
	$$ 
are isomorphisms for all $i$. 
\end{proof}

\begin{remark}
As far as the authors know there is no obvious direct relationship between the question whether a topos is hypercomplete in the sense of \cite{Lurie2009} and the question whether sheaf cohomology on that topos is computed by \v{C}ech cohomology which was one of the reasons to include the above lemma.  Consider for example the \'etale topos of a finite field which is not hypercomplete according to \cite[War.~7.2.2.31]{Lurie2009}  while in this case \'etale \v{C}ech cohomology computes \'etale sheaf cohomology. On the other hand, the Zariski topos of a Noetherian scheme of finite Krull dimension is hypercomplete \cite[\S~7.2.4]{Lurie2009}, while here \v{C}ech cohomology and sheaf cohomology don't coincide in general.
\end{remark}
	
We can now show the following cohomological descent result.

\begin{proposition}
Let $X$ be a smooth $k$-scheme. Then for all $i,n\geqslant 0$ the change of topology map induces isomorphisms of cohomology groups
	$$
	H^i(X,W\Omega^n)\cong H^i_{\eh}(X,W\Omega^n_{\eh}).
	$$ 
\end{proposition}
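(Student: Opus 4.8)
The plan is to isolate one geometric input --- a blow-up formula for the de~Rham--Witt sheaves --- and feed it into the \v{C}ech machinery of Lemma~\ref{Lem:Cech}. I would begin with the two standing facts about $\F:=W\Omega^n$ as a presheaf on $\Reg(k)$: it is an \'etale sheaf (Illusie \cite{Illusie1979}), and it is quasi-coherent, being the inverse limit $\varprojlim_m W_m\Omega^n$ of the coherent $W_m(\OO)$-modules $W_m\Omega^n$ along the surjective restriction maps. Quasi-coherence gives $R\Gamma_{\Zar}(U,W\Omega^n)=W\Omega^n(U)$ concentrated in degree $0$ for $U$ smooth affine, the same on the small \'etale and Nisnevich sites, and hence \'etale and Nisnevich cohomological descent as well as $H^i_{\Zar}(X,W\Omega^n)\cong H^i_{\et}(X,W\Omega^n)$ in general.

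The technical heart is the behaviour of $W\Omega^n$ on a smooth blow-up square
	$$
	\xymatrix{E \ar[r]\ar[d]_{g} & X' \ar[d]^{f} \\ Z \ar[r]^{i} & X}
	$$
with $X'=\mathrm{Bl}_Z X$, $Z\subset X$ smooth closed of codimension $c$, and $E\subset X'$ the exceptional divisor (a $\mathbf{P}^{c-1}$-bundle over $Z$): I claim $R\Gamma(-,W\Omega^n)$ sends it to a homotopy cartesian square. This follows from the projective space bundle and blow-up formulas for de~Rham--Witt cohomology (Gros): one has $Rf_\ast W\Omega^n_{X'}\simeq W\Omega^n_X\oplus\bigoplus_{j=1}^{c-1}Ri_\ast W\Omega^{n-j}_Z[-j]$ and $Rg_\ast W\Omega^n_{E}\simeq\bigoplus_{j=0}^{c-1}W\Omega^{n-j}_Z[-j]$, the summands with $j\geqslant 1$ matching under restriction along $E\hookrightarrow X'$; writing $R\Gamma(X',W\Omega^n)=R\Gamma(X,W\Omega^n)\oplus C$ and $R\Gamma(E,W\Omega^n)=R\Gamma(Z,W\Omega^n)\oplus C$ accordingly, with the map $R\Gamma(X,W\Omega^n)\to R\Gamma(Z,W\Omega^n)$ being restriction along $i$, the homotopy pullback of $R\Gamma(X',W\Omega^n)\to R\Gamma(E,W\Omega^n)\leftarrow R\Gamma(Z,W\Omega^n)$ is $R\Gamma(X,W\Omega^n)$. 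If one prefers not to invoke this as a black box, it can be produced by d\'evissage along Illusie's filtration of $W_m\Omega^n$ from the classical coherent blow-up formula for the $\Omega^j$, $j\leqslant n$, and a passage to the limit in $m$ (the $\varprojlim^1$-terms vanishing on our Noetherian schemes).

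Granting this, I would first upgrade to $\cdh$-cohomological descent for $W\Omega^n$ on $\Reg(k)$: under resolution the $\cdh$-topology on $\Reg(k)$ is generated by Nisnevich covers and smooth blow-up squares \cite[Lem.~4.6]{Voevodsky2000}, so by the cd-structure formalism the presheaf $Y\mapsto R\Gamma_{\Zar}(Y,W\Omega^n)$ --- which satisfies Nisnevich descent and sends smooth blow-up squares to homotopy cartesian squares --- satisfies $\cdh$-descent; in particular $W\Omega^n_{\cdh}|_{\Reg(k)}=W\Omega^n$, it is quasi-coherent, and $H^i_{\cdh}(X,W\Omega^n_{\cdh})\cong H^i_{\Zar}(X,W\Omega^n)$. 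Likewise, since the $\eh$-topology on $\Reg(k)$ is generated by \'etale covers and smooth blow-up squares \cite[Cor.~2.6]{Geisser2006}, the same reasoning shows $W\Omega^n$ is an $\eh$-sheaf on $\Reg(k)$ that is quasi-coherent and satisfies \'etale and $\cdh$-cohomological descent. Lemma~\ref{Lem:Cech}(iii) then identifies $H^i_{\eh}(X,W\Omega^n_{\eh})$ with $\varinjlim_{\mathfrak{U}\in\eh(X)}\check H^i(\mathfrak{U},W\Omega^n)$, and the refinement $\{V_i\to U'\to U\}$ of \cite[Cor.~2.6]{Geisser2006} together with the spectral sequence of \cite[Lem.~3.4.3]{Kelly2012}, exactly as in the proof of Lemma~\ref{Lem:Cech}, collapses this colimit first to $\varinjlim_{\mathfrak{U}\in\cdh(X)}\check H^i$, then via Lemma~\ref{Lem:Cech}(ii) and $\cdh$-descent to $H^i_{\Zar}(X,W\Omega^n)$, which is the assertion.

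I expect the main obstacle to be the blow-up formula for the \emph{individual} sheaves $W\Omega^n$ rather than for the full de~Rham--Witt complex: one must control the extra direct summands contributed by the exceptional divisor (carrying twisted Frobenius actions), and if one argues by d\'evissage, the Frobenius- and Verschiebung-laden graded pieces of Illusie's filtration together with the passage to the inverse limit. The cd-structure descent and the comparison of the $\eh$-sites of $\Reg(k)$ and $\Sch(k)$ on smooth schemes are, by comparison, routine.
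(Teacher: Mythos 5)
Your proposal is correct and follows essentially the same route as the paper: the geometric input is Gros's blow-up result for $W\Omega^n$ (the paper cites the long exact sequence of \cite[IV. Thm.~1.1.9]{Gros1985} directly, which is equivalent to your homotopy cartesian square), fed into the cd-structure descent criterion (the paper's reference is \cite[Cor.~3.9]{CortinasHasemeyerSchlichtingWeibel2005}) to get Nisnevich and then $\cdh$-cohomological descent on $\Reg(k)$, followed by the same \v{C}ech spectral sequence argument via Lemma~\ref{Lem:Cech} to pass from $\cdh$ to $\eh$. The only cosmetic difference is that you derive the Mayer--Vietoris sequence from the full projective-bundle/blow-up decomposition rather than quoting it, which is a harmless strengthening of the same input.
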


\begin{proof}
To begin with, we show  that $W\Omega^n$ satisfies  cohomological $\cdh$-descent on $\Reg(k)$. Under resolution of singularities, the $\cdh$-topology on $\Reg(k)$ is generated by Nisnevich covers and smooth blow-up squares \cite[Lem.~4.6]{Voevodsky2000}. Moreover, \cite[Cor.~3.9]{CortinasHasemeyerSchlichtingWeibel2005} applies under resolution of singularities regardless of the characteristic of the base field.  It  asserts that $W\Omega^n$ satisfies cohomological $\cdh$-descent  on $\Reg(k)$ if and only if it satisfies cohomological Nisnevich descent and if it takes smooth blow-up squares to long exact sequences of cohomology. 

Since $W\Omega^n$ satisfies cohomological \'etale and hence Nisnevich descent it remains to check the condition on smooth blow-up squares. But this follows by a result due to Gros \cite[IV. Thm.~1.1.9]{Gros1985} which gives for a smooth blow-up $X'\rightarrow X$ along a smooth center $Z$ with pull back $Z'=Z\times_X X'$ a long exact sequence
 	$$
 	\cdots\rightarrow H^i(X,W\Omega^n)\rightarrow H^i(X',W\Omega^n)\oplus H^i(Z,W\Omega^n)\rightarrow H^i(Z',W\Omega^n)\rightarrow\cdots.
 	$$ 
Therefore we have the following isomorphisms for $X\in\Reg(k)$ and  $i\geqslant 0$
	\begin{equation}\label{Equ:cdh-descent}
	H^i_{\Zar}(X,W\Omega^n) \cong H^i_{\et}(X,W\Omega^n) \cong H^i_{\cdh}(X,W\Omega_{\cdh}^n).
	\end{equation}
In particular, taking $i=0$, we see that $W\Omega^n_{\cdh} = W\Omega^n$ on $\Reg(k)$. 

Furthermore, we observe, that $W\Omega^n$ is in fact an $\eh$-sheaf. Here again we use the fact that under resolution of singularities every $\eh$-cover of $X\in\Reg(k)$ has a refinement of the form $\{U_i\rightarrow X'\rightarrow X\}$, where $\{U_i\rightarrow X'\}$ is an \'etale cover and $X'\rightarrow X$ is a composition of smooth blow-ups \cite[Cor.~2.6]{Geisser2006}. In particular, $X'\rightarrow X$ is a $\cdh$-morphism in $\Reg(k)$. Hence it follows now  by  \cite[Prop.~3.4.8(3)]{Kelly2012} with $\tau = \eh$, $\rho = \cdh$, and $\sigma = \et$ that $W\Omega^n_{\eh}=W\Omega^n$. 

It remains to show  descent in higher cohomological degrees.  We once again use that  any $\eh$-cover in $\Reg(k)$ has under resolution of singularities a refinement  $\{U_i\rightarrow X'\rightarrow X\}$, where $\{U_i\rightarrow X'\}$ is an \'etale cover and $X'\rightarrow X$ is a $\cdh$-morphism in $\Reg(k)$. While keeping in mind that $W\Omega^n$ is an $\eh$-sheaf  we make the following computation of \v{C}ech cohomology groups using a similar informal notation as in the proof of Lemma~\ref{Lem:Cech}(iii) 
	\begin{eqnarray*}
	\varinjlim_{\mathfrak{U}\in\eh(X)}\check{H}^i(\mathfrak{U}/X,W\Omega^n) & \cong & \varinjlim_{X'\in\cdh(X)} \varinjlim_{\mathfrak{U}\in\et(X')}\check{H}^i(\mathfrak{U}/X',W\Omega^n)\\
	& \cong & \varinjlim_{X'\in\cdh(X)}\varinjlim_{\mathfrak{U}\in\Zar(X')}\check{H}^i(\mathfrak{U}/X',W\Omega^n) \\
	& \cong & \varinjlim_{\mathfrak{U}\in\cdh(X)}\check{H}^i(\mathfrak{U}/X,W\Omega^n),
	\end{eqnarray*}
with $X'\in\Reg(k)$ where the second equality follows similarly as in the proof of Lemma~\ref{Lem:Cech}. 
This implies
	\begin{equation}\label{Eq:eh-descent}
	H^i_{\eh}(X,W\Omega_{\eh}^n) \cong H^i_{\cdh}(X,W\Omega_{\cdh}^n),
	\end{equation}
because	by Lemma~\ref{Lem:Cech} all cohomology groups considered are computed as colimits of \v{C}ech cohomology along covers in the respective topologies.
Putting (\ref{Equ:cdh-descent}) and (\ref{Eq:eh-descent}) together, we obtain $H^i_{\eh}(X,W\Omega^n_{\eh}) = H^i_{\Zar}(X,W\Omega^n)$.
\end{proof}

\begin{remark}
	(i)  One can also use the strategy of \cite[Thm.~4.3]{Geisser2006} to show the above result. The main point is to compare long exact blow-up sequences for Zariski and $\eh$-topology and then make a spectral sequence argument as in the proof of  \cite[Proof of Thm.~3.1]{Kohrita2017}. \\
	(ii)  In \cite[Thm.~3.5]{GeisserHesselholt2009} Thomas~Geisser and Lars~Hesselholt prove  under resolution of singularities a version of above's result for Witt differentials with finite coefficients. For $X\in\Reg(k)$ they deduce the isomorphisms
		$$
		H^i(X,W_m\Omega^n) \cong H^i_{\cdh}(X,W_m\Omega_{\cdh}^n) \cong H^i_{\et}(X,W_m\Omega^n) \cong H^i_{\eh}(X,W_m\Omega^n).
		$$
by induction on $m$ from \cite[Cor.~3.9]{CortinasHasemeyerSchlichtingWeibel2005} which was also used in the proof above.
\end{remark}	

We come now to the second part of the proof of the main result of this section.

\begin{proposition}\label{prop:eh-h}
For a smooth $k$-scheme $X$ the change of topology map induces for all $i,n\geqslant 0$ isomorphisms
	$$
	H^i_{\eh}(X, W\Omega^n_{\QQ,\eh}) \cong H^i_{\h}(X, W\Omega^n_{\QQ,\h}).
	$$
\end{proposition}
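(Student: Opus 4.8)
The plan is to reduce the statement to cohomological $\h$-descent for $W\Omega^n_{\QQ}$ on smooth $k$-schemes and then to run the \v{C}ech-theoretic argument of Lemma~\ref{Lem:Cech} and the preceding proposition, with one extra layer of finite covers handled by a trace. Since tensoring with $\QQ$ is exact and commutes with sheafification, the preceding proposition gives $H^i_{\eh}(X,W\Omega^n_{\QQ,\eh})\cong H^i(X,W\Omega^n_{\QQ})$ for smooth $X$, and $W\Omega^n_{\QQ,\h}=(W\Omega^n_{\QQ,\eh})_{\h}$; so it suffices to prove that the change of topology map induces isomorphisms $H^i(X,W\Omega^n_{\QQ})\cong H^i_{\h}(X,W\Omega^n_{\QQ,\h})$ for all $i,n\geqslant 0$ and all smooth $X$.

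Recall from the diagram in Section~2 that the $\h$-topology is generated by the $\eh$- and the $\qfh$-topology, so that the only content beyond the $\eh$-descent already established is descent along finite surjective morphisms. I would make this precise by refining, via Voevodsky's structure theory of $\h$-covers \cite{Voevodsky1996} together with resolution of singularities, an arbitrary $\h$-cover of a smooth $k$-scheme $X$ to a tower whose layers are, in a suitable order, an \'etale cover, a finite surjective morphism $f\colon Y\rightarrow X'$, and a composition of smooth blow-ups; applying resolution once more to $Y$, one may take $Y$ smooth. Iterating the \v{C}ech-type spectral sequence of \cite[Lem.~3.4.3]{Kelly2012} exactly as in the proof of Lemma~\ref{Lem:Cech}(iii) over the \'etale and the proper-birational layers --- both governed by the $\eh$-descent already in hand --- then reduces the claim to the vanishing of the higher \v{C}ech cohomology of $W\Omega^n_{\QQ}$ along a proper, generically finite, surjective morphism $f\colon Y\rightarrow X$ of smooth $k$-schemes.

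For this last point I would invoke the Gysin and trace formalism for the de~Rham--Witt complex of Gros \cite[IV]{Gros1985}: such an $f$ admits a transfer $f_{\ast}\colon H^i(Y,W\Omega^n)\rightarrow H^i(X,W\Omega^n)$ with $f_{\ast}f^{\ast}=(\deg f)\cdot\id$, so that after inverting $p$ the pullback $f^{\ast}$ is split injective; extending these traces over the simplicial nerve of $f$ and its (a priori singular, hence first resolved) fibre products yields a contracting homotopy exhibiting the augmented rationalised \v{C}ech complex of $f$ as a resolution of $W\Omega^n_{\QQ}(X)$. The purely inseparable finite covers --- Frobenius being the typical case --- are harmless, since the relation $FV=p$ makes $F$ an isomorphism on $W\Omega^n_{\QQ}$, so descent along such a cover is automatic. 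Feeding these observations into the \v{C}ech computation and using Lemma~\ref{Lem:Cech} to identify the sheaf cohomology groups in sight with colimits of \v{C}ech cohomology yields $H^i_{\h}(X,W\Omega^n_{\QQ,\h})\cong H^i(X,W\Omega^n_{\QQ})$, and hence the proposition. Alternatively, one could argue sheaf-theoretically, by checking that $W\Omega^n_{\QQ,\eh}$ is already an $\h$-sheaf with vanishing higher direct images under the change of topology map and concluding via the Leray spectral sequence.

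The step I expect to be the main obstacle is exactly the trace argument over the non-smooth fibre products $Y\times_{X}\cdots\times_{X}Y$: one must resolve these, carry the degree bookkeeping through the resulting generically finite alterations --- where it is crucial that we work with $\QQ$-coefficients, so that multiplicities and ramification become invertible --- and check that the traces so produced are compatible with the simplicial face maps, so that they assemble into an honest homotopy. Working rationally throughout, and using that the $\eh$-sheaf $W\Omega^n_{\QQ,\eh}$ already takes the correct values on the singular schemes that appear, is what should keep this manageable.
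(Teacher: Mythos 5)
Your main route has a genuine gap, and it is exactly the one you flag yourself: the construction of a contracting homotopy for the \v{C}ech complex of a generically finite cover $f\colon Y\to X$ out of Gros's trace. Gros's formalism produces $f_{\ast}$ on cohomology of \emph{smooth} schemes, but the terms $W\Omega^n_{\QQ}(Y\times_X\cdots\times_X Y)$ of your \v{C}ech complex live on singular schemes, where $W\Omega^n_{\QQ}$ is precisely the object this paper is trying to tame; resolving the fibre products replaces the \v{C}ech complex by something that is no longer the \v{C}ech complex of the cover, and the required compatibility of the resulting transfers with all simplicial face maps (so that $dh+hd=\id$ actually holds) is not established and is not a routine bookkeeping exercise. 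There is also a structural problem upstream of the homotopy: to pass from \v{C}ech to sheaf cohomology in the $\h$-topology via a Cartan-type criterion you would need \v{C}ech acyclicity on a cofinal family of objects, and for the $\h$-topology that family unavoidably contains singular schemes, unlike the $\eh$-situation of Lemma~\ref{Lem:Cech} where every cover refines to one by smooth affines. So the trace-homotopy route, as proposed, does not close.

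The one-sentence ``alternative'' at the end of your proposal is in fact the paper's proof, and it avoids both problems by separating two statements of different nature. First, $H^i_{\h}(X,\F)\cong H^i_{\eh}(X,\F)$ holds for \emph{any} $\h$-sheaf of $\QQ$-vector spaces by \cite[Prop.~6.1]{HuberJoerder2014} (its proof is characteristic-free); no de~Rham--Witt-specific Gysin maps are needed for the cohomological layer. Second, one only has to check the \emph{degree-zero} sheaf condition, namely that $W\Omega^n_{\QQ,\eh}$ is already an $\h$-sheaf. Using Voevodsky's normal form for $\h$-covers together with resolution of singularities, this reduces to the sheaf sequence for a single finite surjective map of irreducible normal varieties, which is handled by splitting generically into a Galois part (Galois descent for the \'etale sheaf $W\Omega^n_{\QQ}$, with the group action playing the role of your trace) and a purely inseparable part (where, as you correctly note, $FV=VF=p$ makes Frobenius invertible rationally); this is the computation of Proposition~\ref{prop:qfhDescent}. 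I would recommend promoting your last sentence to the actual argument and discarding the transfer-homotopy construction.
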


\begin{proof}
Note that \cite[Prop.~6.1]{HuberJoerder2014}, whose proof does not depend on the characteristic of the base field, implies immediately that  $H^i_{\h}(X,W\Omega^n_{\QQ,\h}) \cong H^i_{\eh}(X,W\Omega^n_{\QQ,\h})$. Thus the desired statement follows if one shows $W\Omega^n_{\QQ,\eh}\cong W\Omega^n_{\QQ,\h}$. 

This follows by  the same argument used in the proof of the first statement of \cite[Thm.~3.6]{HuberJoerder2014}.  Namely, we need to check that for any $X\in\Sch(k)$, the canonical morphism
	$$
	W\Omega^n_{\QQ,\eh}(X) \rightarrow \varinjlim_{\mathfrak{U}\in\h(X)}\check{H}^0(\mathfrak{U},W\Omega^n_{\QQ,\eh})
	$$
where $\mathfrak{U}$ runs over all $h$-covers of $X$ is an isomorphism. In fact it is enough to show the sheaf condition for $W\Omega^n_{\QQ,\eh}$ for a refinement of an arbitrary $\h$-cover $\mathfrak{U}$. 

By \cite[Thm.~3.1.9]{Voevodsky1996} any $\h$-cover of $X$ has a refinement of the form $\{U_i\rightarrow \bar{U}\rightarrow X'\rightarrow X\}$ with a blow-up $X'\rightarrow X$ in a closed subvariety, a finite surjective map $\bar{U}\rightarrow X'$, and a Zariski cover $\{U_i\rightarrow \bar{U}\}$. By resolution of singularities we may assume that $X'$ is smooth and $X'\rightarrow X$ a sequence of smooth blow-ups.  For every connected component $X'_j$ of $X'$ choose an irreducible component $\bar{U}_j$ of $\bar{U}$ which maps surjectively to $X'_j$. Replace $\bar{U}$ by the disjoint union of the normalizations of $\bar{U}_j$ in the normal hull of $k(\bar{U}_j)/k(X'_j)$ thereby refining the cover further to one of the form $\left\{U_{ij}\rightarrow \bar{U}_j\rightarrow  \cup X'_j=X'\rightarrow X\right\}$ such that for $j$ fixed $\{U_{ij}\rightarrow \bar{U}_j\}_i$  is an open Zariski cover, the $X'_j$ are the  irreducible components of $X'$,  for each $j$, $\bar{U}_j \rightarrow X'_j$ is a finite surjective map of irreducible normal varieties, and $X'\rightarrow X$ is as above. As in \cite[Thm.~3.6]{HuberJoerder2014}, it suffices to check the sheaf condition for $W\Omega^n_{\QQ,\eh}$ for each of the three intermediate maps. 

Since the first and the last map in the refinement are in particular $\eh$-covers, for which the sheaf condition for $W\Omega^n_{\QQ,\eh}$ holds trivially, it remains to show that $W\Omega^n_{\QQ,\eh}$ satisfies the sheaf condition for a finite surjective map $\bar{U}\rightarrow X'$ of irreducible normal varieties. Although this is checked on strict henselisations of discrete valuation rings in the proof of Proposition~\ref{prop:qfhDescent} for $W\Omega^n_{\QQ}$, the same proof holds for schemes as well. Furthermore,  it is clear that the sheaf condition then holds for  $W\Omega^n_{\QQ,\eh}$, too.

As the presented argument depends on later results, we note that nothing outside of Section 3 involves this proposition.
\end{proof}

\begin{corollary}
Let $X\in\Reg(k)$. Then one has for all $i,n\geqslant 0$ isomorphisms
	$$
	H^i_{\Zar}(X, W\Omega^n_{\QQ}) \cong H^i_{\h}(X, W\Omega^n_{\QQ,\h}).
	$$
\end{corollary}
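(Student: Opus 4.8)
The plan is to chain the two propositions of this section after rationalizing the integral $\eh$-descent statement. The first thing I would note is that since $k$ is perfect, every regular $k$-scheme of finite type is smooth over $k$; hence the unlabelled proposition at the start of this section and Proposition~\ref{prop:eh-h} both apply to $X\in\Reg(k)$, and one has the integral isomorphism $H^i(X,W\Omega^n)\cong H^i_{\eh}(X,W\Omega^n_{\eh})$ for all $i,n\geqslant 0$.

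The one step that needs a word is the passage to rational coefficients. Since $-\otimes\QQ$ is exact and commutes with filtered colimits, it commutes with $\tau$-sheafification, so $W\Omega^n_{\QQ,\eh}=(W\Omega^n_{\eh})\otimes\QQ$ (and similarly for the Zariski topology, where $W\Omega^n_{\QQ}=W\Omega^n\otimes\QQ$ by definition). Writing $\QQ=\varinjlim_{m}\tfrac1{m!}\ZZ$ as a filtered colimit and using that sheaf cohomology on the coherent (quasi-compact, quasi-separated) scheme $X$ commutes with filtered colimits of abelian sheaves, one gets $H^i_{\eh}(X,W\Omega^n_{\QQ,\eh})\cong H^i_{\eh}(X,W\Omega^n_{\eh})\otimes\QQ$ and $H^i_{\Zar}(X,W\Omega^n_{\QQ})\cong H^i_{\Zar}(X,W\Omega^n)\otimes\QQ$. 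Tensoring the integral $\eh$-descent isomorphism with $\QQ$ therefore yields
	$$
	H^i_{\Zar}(X,W\Omega^n_{\QQ})\cong H^i_{\eh}(X,W\Omega^n_{\QQ,\eh})
	$$
for all $i,n\geqslant 0$.

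Finally I would invoke Proposition~\ref{prop:eh-h}, which for smooth $X$ gives $H^i_{\eh}(X,W\Omega^n_{\QQ,\eh})\cong H^i_{\h}(X,W\Omega^n_{\QQ,\h})$, and compose it with the previous isomorphism to conclude. There is no real obstacle here: the substance lies entirely in the two propositions already proved (and through them in resolution of singularities, Gros's blow-up sequence, and the \v{C}ech-to-derived comparison of Lemma~\ref{Lem:Cech}); the only mildly delicate point is the interchange of $-\otimes\QQ$ with sheafification and with cohomology, which is routine for coherent topoi and which I would dispatch in a single sentence rather than elaborate.
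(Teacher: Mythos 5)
Your proof is correct and follows exactly the route the paper intends: the corollary is stated without proof as the composition of the section's first proposition (integral $\eh$-descent) with Proposition~\ref{prop:eh-h}. Your explicit treatment of the only non-trivial point --- that $-\otimes\QQ$ commutes with $\tau$-sheafification and with cohomology on these coherent sites, so the integral statement rationalizes --- is precisely the bookkeeping the paper leaves implicit, and it is handled correctly.
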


We cannot yet establish cohomological $\h$-descent for $W \Omega_{\QQ}^n$ without assuming resolution of singularities. The reason is that both approaches presented above use the refinement of an $\eh$-cover into an \'etale cover and a series of smooth blow-ups given by Geisser. To obtain such a refinement strong resolution of singularities is needed. 	If one attempts to use alterations here instead finite maps are introduced in the process, which cannot be controlled.

\section{$dvr$-Witt differentials}\label{sec:dvr}

In this section we apply the extension functor defined in  \cite[Sec.~4.1]{HuberKebekusKelly2016} to Witt differentials. It can be interpreted as the right adjoint of the restriction functor from presheaves on $\Sch(S)$ to presheaves on $\Reg(S)$, in other words the right Kan extension along the inclusion $\Reg(S)\rightarrow \Sch(S)$. As we will see, it is a useful tool to obtain descent properties on regular schemes.

\subsection{The extension functor on Witt differentials} 

We briefly recall the definition of the extension functor and important properties. For more details we refer to \textit{loc. cit.}. 

\begin{definition}
For a presheaf $\F$ on $\Reg(S)$, let $\F_{\text{dvr}}$ be the presheaf on $\Sch(S)$ given by
	$$
	\F_{\text{dvr}}(X):=\varprojlim_{Y\in\Reg(X)}\F(Y)
	$$
for $X\in\Sch(S)$. 
\end{definition} 

Huber, Kebekus, and Kelly show that if $\F$ is unramified in the sense of \cite[Def.~2.1]{Morel2012}, then  $\F_{\text{dvr}}$ can be expressed in a form which is particularly useful to establish descent results.
\begin{definition}\label{def:Unramified} A presheaf $\F$ on $\Reg(S)$ is {\bf unramified} if it is a Zariski sheaf and satisfies the following  conditions for all $X$ and $Y$ in $\Reg(S)$.
	\begin{enumerate}
	\item The natural morphism $\F( X \sqcup Y) \to \F(X) \times \F(Y)$ is an isomorphism.
	\item For a dense open immersion $U \to X$, the induced map $\F(X) \to \F(U)$ is injective.
	\item For every open immersion $U \to X$ containing all points of codimension at most $1$,  the induced map $\F(X) \to \F(U)$ is an isomorphism. 
	\end{enumerate}
A presheaf $\F$ on $\Sch(S)$ is unramified if its restriction to $\Reg(S)$ is unramified.
\end{definition}

If $\F$ is an unramified presheaf on $\Sch(S)$, then $\F_{\dvr}$ can be expressed as
	\begin{equation}\label{DvrSheaf}
	\F_{\dvr}(X) \cong \varprojlim_{W\in\Dvr(X)}\F^{\ess}(W)
	\end{equation}
according to \cite[Prop.~4.14]{HuberKebekusKelly2016}. Here $\F^{\ess}$ denotes the left Kan extension of the presheaf $\F$ to a presheaf on $\Sch(S)^{\ess}$ of $S$-schemes essentially of finite type.

Note that this allows us to view the extension functor  $(-)_{\dvr}$ as the right Kan extension along the inclusion $\Dvr(S) \to \Sch(S)^{\ess}$. We apply it now to the sheaves $W\Omega^n_{\QQ}$, $n\geqslant 0$, over a perfect field $k$ of positive characteristic $p$. 

\begin{proposition}\label{prop:Unramified}
Let $k$ be a perfect field of characteristic $p>0$. Both the presheaf $W\Omega^n$ and the presheaf $W\Omega_{\QQ}^n$ are unramified on $\Reg(k)$. 
\end{proposition}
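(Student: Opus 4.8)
The plan is to verify the three conditions of Definition~\ref{def:Unramified} for $W\Omega^n$ on $\Reg(k)$; the statement for $W\Omega^n_\QQ$ then follows immediately, since tensoring with $\QQ$ is exact and commutes with the finite products and the Zariski-sheaf conditions involved, and injectivity/isomorphism of a map of abelian groups is inherited after $-\otimes\QQ$ (in fact, since on regular schemes $W\Omega^n$ is $p$-torsion-free by Illusie, $W\Omega^n\hookrightarrow W\Omega^n_\QQ$, which makes the reductions even cleaner). First I would recall that $W\Omega^n$ is an \'etale, in particular a Zariski, sheaf on $\Reg(k)$ by \cite{Illusie1979}, so the base hypothesis of being a Zariski sheaf holds, and condition (i) (the finite-disjoint-union/additivity property) is then automatic from the sheaf axiom.

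Next I would treat conditions (ii) and (iii), which are the heart of the matter and say that $W\Omega^n$ on regular schemes is determined by its values in codimension $\leqslant 1$ and injects under restriction to dense opens. For a connected regular (hence integral) scheme $X$ with function field $K$, the key input is that $W\Omega^n_X$ has no sections supported on a proper closed subset: one reduces to the local case, where for a regular local ring $A$ with fraction field $K$ one has an injection $W\Omega^n_A\hookrightarrow W\Omega^n_K$ and, more precisely, $W\Omega^n_A=\bigcap_{\mathrm{ht}\,\mathfrak p=1} W\Omega^n_{A_{\mathfrak p}}$ inside $W\Omega^n_K$. This ``unramifiedness of the de~Rham--Witt sheaf'' is exactly the content one wants; it can be extracted from Illusie's structure theory, or alternatively deduced levelwise from the analogous, classical fact for K\"ahler differentials $\Omega^n$ on regular rings (which is \cite[Thm.~5.11]{HuberKebekusKelly2016}-adjacent and ultimately goes back to the normality/reflexivity of $\Omega^n$ over regular rings) combined with the exact sequences $0\to W_{m-1}\Omega^n\xrightarrow{V}W_m\Omega^n\to \Omega^n\to\cdots$ relating the truncations, by induction on the truncation level $m$ and then passing to the inverse limit. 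Given such a description, (ii) is immediate — $\F(X)\subset W\Omega^n_K$ and restriction to a dense open $U$ is the identity inclusion on the generic point — and (iii) follows because removing a closed subset of codimension $\geqslant 2$ does not change the set of height-$\leqslant 1$ primes, so the intersection defining $\F(X)$ is unchanged.

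Concretely, for (iii) I would argue as follows. Let $U\subset X$ be open containing all points of codimension $\leqslant 1$; restriction gives $\F(X)\to\F(U)$, injective by (ii) since $U$ is dense. For surjectivity, work Zariski-locally and assume $X=\Spec A$ with $A$ regular; a section over $U$ is a compatible family, hence by (ii) an element $\omega\in W\Omega^n_K$ lying in $W\Omega^n_{A_{\mathfrak p}}$ for every prime $\mathfrak p$ of $U$, in particular for every height-$1$ prime of $A$ (all of which lie in $U$); then $\omega\in\bigcap_{\mathrm{ht}\,\mathfrak p=1} W\Omega^n_{A_{\mathfrak p}}=W\Omega^n_A$, giving the required preimage.

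The main obstacle will be pinning down cleanly the identity $W\Omega^n_A=\bigcap_{\mathrm{ht}\,\mathfrak p=1}W\Omega^n_{A_{\mathfrak p}}$ for regular local $A$ (equivalently, the codimension-$\geqslant 2$ purity of $W\Omega^n$ on regular schemes): one must either cite it directly from Illusie's or Gros--Suwa's work on the de~Rham--Witt complex of smooth/regular schemes, or reconstruct it from the known normality of $\Omega^n$ over regular rings by dévissage through the truncations $W_m\Omega^n$ using the $V$- and $R$-filtration exact sequences, being careful that the $\varprojlim_m$ preserves the intersection (which it does, as all maps are injective on torsion-free modules and the system is Mittag-Leffler). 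Everything else — additivity, the Zariski-sheaf property, and the passage from $W\Omega^n$ to $W\Omega^n_\QQ$ — is formal.
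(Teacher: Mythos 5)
Your overall strategy is sound and conditions (i), (ii) and the reduction to the integral statement match the paper, but for the crucial condition (iii) you take a genuinely different route. You propose to prove a purity/reflexivity statement for the de~Rham--Witt sheaf itself, namely $W\Omega^n_A=\bigcap_{\mathrm{ht}\,\mathfrak p=1}W\Omega^n_{A_{\mathfrak p}}$ inside $W\Omega^n_K$ for $A$ regular local, and you correctly identify this as the main thing to be established. The paper avoids this entirely by a softer argument: since $X$ is regular (hence normal) and $U$ contains all points of codimension $\leqslant 1$, algebraic Hartogs gives $\OO(X)\xrightarrow{\sim}\OO(U)$, hence $W_{\sbt}\OO(X)\xrightarrow{\sim}W_{\sbt}\OO(U)$, and then the universal property of $W_{\sbt}\Omega^{\sbt}$ as the initial de~Rham-$V$-pro-complex on its degree-zero part (together with the surjections $\Omega^{\sbt}_{W_l\OO}\twoheadrightarrow W_l\Omega^{\sbt}$) forces $W_{\sbt}\Omega^{\sbt}(X)\to W_{\sbt}\Omega^{\sbt}(U)$ to be an isomorphism in all degrees. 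This buys a short proof that never needs the structure theory of the graded pieces; your approach buys a stronger and independently useful statement (codimension-$\geqslant 2$ purity of $W\Omega^n$ on regular schemes), at the cost of the d\'evissage you sketch. That d\'evissage does go through, but be careful with the exact sequence you quote: $V$ does not present $\Omega^n$ as the cokernel in the way you wrote; the correct tool is the restriction filtration $0\to\Fil^{m-1}W_m\Omega^n\to W_m\Omega^n\xrightarrow{R}W_{m-1}\Omega^n\to 0$ together with Illusie's description of $\gr^iW\Omega^n$ as an extension of $\Omega^{n-1}/Z_i$ by $\Omega^n/B_i$, which are locally free $\OO_X$-modules (via Frobenius twists and the Cartier isomorphism) precisely because regular finite-type schemes over a perfect field are smooth; the extension argument then needs the torsion-freeness of the successive quotients, which these locally free pieces provide, and the passage to $\varprojlim_m$ is harmless since inverse limits commute with the intersections in question. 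With that key lemma pinned down (or cited from Illusie/Gros--Suwa), your proof is complete; as written it is a correct plan with the hardest step deferred rather than carried out.
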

\begin{proof} 
It suffices to show that $W \Omega^n$ is unramified. Since $W\Omega^n$ is an \'etale sheaf, it remains to verify the three conditions defining unramified presheaves.  

To begin with, let $X,Y\in\Reg(k)$. We have to show that the natural  map $W\Omega^n(X\sqcup Y)\rightarrow W\Omega^n(X)\times W\Omega^n(Y)$ for each $n\geqslant 0$ is an isomorphism. Since $\{X,Y\}$ is an \'etale cover of $X\sqcup Y$ and $W\Omega^n$ is an \'etale sheaf, we have an exact sequence
	$$
	0\rightarrow W\Omega^n(X\sqcup Y) \rightarrow W\Omega^n(X)\times W\Omega^n(Y)\rightarrow W\Omega^n(X \times_{X\sqcup Y} Y).
	$$
But $X \times_{X\sqcup Y} Y=\varnothing$ and therefore $W\Omega^n(X \times_{X\sqcup Y} Y)=0$, and $W\Omega^n(X\sqcup Y) \rightarrow W\Omega^n(X)\times W\Omega^n(Y)$ is an isomorphism.

Next let $j:U\rightarrow X$ be a dense open immersion in $\Reg(k)$. We will show that for any $n\geqslant 0$ the induced morphism $W\Omega^n_X\rightarrow j_\ast W\Omega^n_U$ is injective. If this is not the case, then we can find an affine (Zariski) open $\Spec A=V\subset X$ such that for some $n\geqslant 0$ there is a nonzero element $\omega\in W\Omega^n(V)$ which maps to zero in $W\Omega^n(U\cap V)$. By restricting further we may assume that $W=U\cap V=\Spec B$ is also affine. Consequently, by \cite[I.Prop.~1.13.1]{Illusie1979} we can  identify $W\Omega^{\sbt}(V)$ with $W\Omega^{\sbt}_A$ and $W\Omega^{\sbt}(U\cap V)$ with $W\Omega^{\sbt}_B$. Moreover, by \cite[I.Thm.~1.3]{Illusie1979} the morphism $W\Omega^{\sbt}_A\rightarrow W\Omega^{\sbt}_B$ corresponds via the functorial isomorphism \cite[I.(1.3.1)]{Illusie1979} to the  map $A=\OO(V)\rightarrow B=\OO(U\cap V)$, which is injective by definition as $j(U)$ is dense in $X$.  It follows then easily that  $W\Omega^{\sbt}_A\rightarrow W\Omega^{\sbt}_B$ has to be injective as well, which is a contradiction to $\omega\neq 0$. This shows the claim.

To complete the proof we now consider  an open immersion $j:U\rightarrow X$ in $\Reg(k)$ containing all points of codimension $\leqslant 1$. Since this is a local question, we reduce without loss of generality to the case that $U$ and $X$ are affine, and so \cite[I.Prop.~1.13.1]{Illusie1979} applies again. In particular, for each $l\geqslant 1$, the complex $W_l\Omega^{\sbt}(X)$ is a quotient of $\Omega^{\sbt}_{W_l\OO(X)}$ and similarly for $U$. Consider the induced morphism of complexes $j:W\Omega^{\sbt}(X)\rightarrow W\Omega^{\sbt}(U)$. Before taking limits, $j:W_{\sbt}\Omega^{\sbt}(X)\rightarrow W_{\sbt}\Omega^{\sbt}(U)$ is a morphism in the category of de~Rham-$V$-pro-complexes defined in \cite[I.~Def.~1.1]{Illusie1979}. In degree zero it is given by the morphism of pro-objects
	$$
	j:W_{\sbt}\OO(X)\xrightarrow{\sim} W_{\sbt}\OO(U)
	$$
which by the definition of Witt vectors is an isomorphism since $\OO(X)\xrightarrow{\sim}\OO(U)$ is an isomorphism. Hence it has an inverse $h:W_{\sbt}\OO(U)\rightarrow W_{\sbt}\OO(X)$, which induces by the universal property of the de~Rham--Witt complex a morphism
	$$
	h:W_{\sbt}\Omega^{\sbt}(U)\rightarrow W_{\sbt}\Omega^{\sbt}(X)
	$$
in the category of de~Rham-$V$-pro-complexes. To see that $j$ and $h$ are inverse to each other in the category of de~Rham-$V$-pro-complexes, observe that for each $l\geqslant 1$, the morphisms $j:W_l\OO(X)\rightarrow W_l\OO(U)$ and $h:W_l\OO(U)\rightarrow W_l\OO(X)$  induce morphisms
	$$
	\xymatrix{\Omega_{W_l\OO(X)}^{\sbt} \ar@<.5ex>[r]^{j} & \Omega_{W_l\OO(U)}^{\sbt} \ar@<.5ex>[l]^{h}}
	$$
which are inverse to each other. By the surjectivity of the projections $\pi_l:\Omega_{W_l\OO(X)}^{\sbt}\rightarrow W_l\Omega^{\sbt}(X)$ and $\pi_l:\Omega_{W_l\OO(U)}^{\sbt}\rightarrow W_l\Omega^{\sbt}(U)$  for all $l\geqslant 1$, the morphisms
	$$
	\xymatrix{W_{\sbt}\Omega^{\sbt}(X)  \ar@<.5ex>[r]^{j} & W_{\sbt}\Omega^{\sbt}(U) \ar@<.5ex>[l]^{h}}
	$$
are inverse to each other, too, and $j$ is an isomorphism. After taking limits, we see that $j:W\Omega^{\sbt}(X)\rightarrow W\Omega^{\sbt}(U)$ is an isomorphism of complexes, in particular $W\Omega^n(X)\rightarrow W\Omega^n(U)$ is an isomorphism for every $n\geqslant0$. 
\end{proof}

We call the presheaf $W\Omega^n_{\QQ,\dvr}$ obtained by applying the extension functor to the presheaf $W\Omega^n_{\QQ}$ of rational Witt differentials on $\Sch(k)$ the {\bf $\mathbf{dvr}$-Witt differentials} of degree $n$.

\begin{corollary}\label{cor:wittdvr}
Let $X\in\Sch(k)$. The $dvr$-Witt differentials of $X$ can be written as
$$W\Omega^n_{\QQ,\dvr}(X) \cong \varprojlim_{Y\in\Dvr(X)} W\Omega^n_{\QQ}(Y).$$
\end{corollary}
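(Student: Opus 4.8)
The plan is to combine the general formula \eqref{DvrSheaf} for the extension functor on unramified presheaves with the fact, established in Proposition~\ref{prop:Unramified}, that $W\Omega^n_\QQ$ is unramified on $\Reg(k)$. Concretely, since $W\Omega^n_\QQ$ is unramified, \cite[Prop.~4.14]{HuberKebekusKelly2016} (recorded as \eqref{DvrSheaf} above) gives
$$
W\Omega^n_{\QQ,\dvr}(X)\cong\varprojlim_{W\in\Dvr(X)}(W\Omega^n_\QQ)^{\ess}(W),
$$
so the only thing left to check is that the left Kan extension $(W\Omega^n_\QQ)^{\ess}$ agrees on objects of $\Dvr(X)$ with the naive value $W\Omega^n_\QQ(W)=W\Omega^n_X(W)\otimes\QQ$ computed via Illusie's de~Rham--Witt complex for essentially-of-finite-type schemes.

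The key step is therefore the identification $(W\Omega^n_\QQ)^{\ess}(W)\cong W\Omega^n_\QQ(W)$ for $W\in\Dvr(X)$. An object of $\Dvr(X)$ is of the form $\Spec R$ with $R$ a regular local ring of dimension at most $1$, essentially of finite type over $k$; such an $R$ is a filtered colimit $\varinjlim_i A_i$ of smooth (finite-type) $k$-algebras $A_i$ (the coordinate rings of affine opens of a model, localized along the relevant point), and the left Kan extension is by definition $(W\Omega^n_\QQ)^{\ess}(\Spec R)=\varinjlim_i W\Omega^n_\QQ(\Spec A_i)$. So I would invoke the compatibility of Illusie's construction with filtered colimits of rings: by \cite[I.Prop.~1.13.1]{Illusie1979} (already used in the proof of Proposition~\ref{prop:Unramified}) one identifies $W\Omega^\sbt(\Spec A_i)$ with $W\Omega^\sbt_{A_i}$, and the de~Rham--Witt complex commutes with filtered colimits, whence $W\Omega^\sbt_R\cong\varinjlim_i W\Omega^\sbt_{A_i}$; tensoring with $\QQ$ (which is flat, hence commutes with the colimit) gives the desired identification in degree $n$. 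Rewriting the indexing category $\Dvr(X)$ back in terms of $\Reg(X)$ via the description of $(-)_\dvr$ as a right Kan extension along $\Dvr(S)\to\Sch(S)^{\ess}$ recorded after \eqref{DvrSheaf}, one gets exactly
$$
W\Omega^n_{\QQ,\dvr}(X)\cong\varprojlim_{Y\in\Dvr(X)}W\Omega^n_\QQ(Y).
$$

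The main obstacle I expect is purely bookkeeping rather than conceptual: one must be careful that "$W\Omega^n_\QQ(Y)$" for $Y\in\Dvr(X)$ genuinely means the value of Illusie's complex on an essentially-of-finite-type scheme (i.e. the $(W\Omega^n_\QQ)^{\ess}$ value) and that this coincides with the filtered-colimit formula — this is where the compatibility of the de~Rham--Witt complex with localization and filtered colimits is doing the real work. Everything else is a direct citation of \eqref{DvrSheaf} together with Proposition~\ref{prop:Unramified}. I would keep the write-up short, essentially a two-line deduction: apply \eqref{DvrSheaf}, then note that on $\Dvr(X)$ the essential-of-finite-type extension of $W\Omega^n_\QQ$ is computed by Illusie's complex directly.
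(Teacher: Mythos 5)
Your proposal is correct and follows essentially the same route as the paper: apply equation \eqref{DvrSheaf} using Proposition~\ref{prop:Unramified}, then identify $W\Omega^{n,\ess}_{\QQ}(Y)$ with $W\Omega^n_{\QQ}(Y)$ via the compatibility of the de~Rham--Witt complex with filtered colimits (the paper cites \cite[I.1.10]{Illusie1979} for this last point). No substantive differences.
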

\begin{proof} 
Because $W\Omega^n_{\QQ}$ is unramified as we have seen in Proposition~\ref{prop:Unramified},    equation (\ref{DvrSheaf}) applies and we obtain
	$$
	W\Omega^n_{\QQ,\dvr}(X) \cong \varprojlim_{Y\in\Dvr(X)} W\Omega^{n,\mathsf{ess}}_{\QQ}(Y).
	$$ 
	However, by \cite[I.1.10]{Illusie1979} the sheaf $W\Omega^n_{\QQ}$ commutes with filtered colimits which means that $W\Omega^{n,\mathsf{ess}}_{\QQ}(Y) \cong W \Omega^n_{\QQ}(Y)$. 
\end{proof}

\subsection{Descent results for $\dvr$-Witt differentials}\label{subsec:dvrdescent}

In \cite[Lem.~4.4]{HuberKebekusKelly2016}, Huber, Kebekus and Kelly observe that the extension functor preserves sheaves for topologies equal or coarser than the \'etale topology. This is a consequence of the fact that the inclusion $\Reg(S)\rightarrow \Sch(S)$ is cocontinuous for these topologies. Consequently, for a presheaf $\F$ it is enough to check the sheaf conditions for the mentioned topologies on $\Reg(S)$ in order to show that $\F_{\dvr}$ is a sheaf on $\Sch(S)$. 

 We would like to use a similar technique to prove that $W\Omega^n_{\QQ,\dvr}$ is a $\qfh$-sheaf. However as the $\qfh$-topology is finer than the \'etale topology, we have to realise the extension  functor as the right Kan extension along the inclusion $\Shdvr(k)\rightarrow \Sch(k)^{\ess}$ which is cocontinuous for the $\qfh$-topology. More precisely, we have the following.

\begin{lemma}\label{lem:slocNormalForm}
For a perfect field $k$ characteristic $p>0$, the inclusion $\Shdvr(k) \rightarrow \Sch(k)^{\ess}$ induces a  cocontinuous morphism of $\qfh$-sites. 
\end{lemma}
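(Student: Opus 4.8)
The plan is to unwind the definition of a cocontinuous morphism of sites and verify the lifting condition directly. Recall that for a functor $u \colon \cC \to \cC'$ between sites to be cocontinuous one must show: given any object $W \in \Shdvr(k)$ and any $\qfh$-covering $\{V_i \to u(W)\}$ in $\Sch(k)^{\ess}$, there exists a $\qfh$-covering $\{W_j \to W\}$ in $\Shdvr(k)$ such that the family $\{u(W_j) \to u(W)\}$ refines $\{V_i \to u(W)\}$. So I would fix $W = \Spec R^{\sh}$ with $\Spec R \in \Dvr(k)$, hence $R^{\sh}$ a strictly henselian discrete valuation ring (or a field, in the dimension-$0$ degenerate case), and analyse what a $\qfh$-cover of such a scheme looks like.

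The first key step is a normal-form reduction for $\qfh$-covers of $W$. Since the $\qfh$-topology is generated by \'etale covers and finite surjective morphisms, any $\qfh$-cover of $W$ admits a refinement of the shape $\{V_{i} \to \bar V \to W\}$ where $\bar V \to W$ is finite surjective and $\{V_i \to \bar V\}$ is an \'etale cover (this is the standard factorization, e.g.\ from Voevodsky's analysis of the $\qfh$-topology). Because $W$ is the spectrum of a strictly henselian local ring, any \'etale cover of $W$ (indeed of anything finite over $W$, which is semilocal henselian) splits: it has a section, so refines to the trivial cover. Thus the only content lies in the finite surjective morphism $\bar V \to W$. Now $\bar V$ is a finite $R^{\sh}$-scheme; passing to connected components and normalizing, the components of $\bar V$ that surject onto $W$ are spectra of finite extensions of the strictly henselian DVR $R^{\sh}$ inside their fraction fields, and any such normalization is again a strictly henselian DVR essentially of finite type over $k$ — here one uses that $k$ is perfect so that residue field extensions cause no trouble, and that finite extensions of henselian local rings are henselian and product of local. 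Hence each such component, or rather its normalization, lies in $\Shdvr(k)$, and the collection of these normalizations gives a $\qfh$-covering $\{W_j \to W\}$ inside $\Shdvr(k)$ whose image refines the original cover.

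The main obstacle I anticipate is bookkeeping around the non-normal or non-reduced components of $\bar V$ and the components that do not dominate $W$: a priori $\bar V \to W$ being finite surjective does not force every component to dominate, and $\bar V$ itself need not be normal, so it is not literally in $\Shdvr(k)$. The fix is to replace $\bar V$ by the disjoint union of the normalizations of those irreducible components that map onto $W$ (equivalently, pass to $\bar V^{\mathrm{red}}$, then normalize, then keep dominant components); one must check this still gives a surjection onto $W$ and is still a $\qfh$-refinement of $\{V_i \to u(W)\}$, which holds because normalization of a dominant finite map is a $\qfh$-cover (it is finite surjective) and the \'etale part $\{V_i \to \bar V\}$ pulls back. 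A second, more technical point is confirming that the normalization of $R^{\sh}$ in a finite field extension is essentially of finite type over $k$ and strictly henselian local of dimension $\leqslant 1$: finiteness of normalization holds since $R^{\sh}$ is an excellent (indeed Nagata) ring as it is essentially of finite type over the perfect field $k$, strict henselianness is inherited, and the dimension is preserved under finite extensions. Once these points are settled, the cocontinuity condition is verified.

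Finally, I would remark that the hypothesis that $k$ is perfect enters precisely to guarantee that $R^{\sh}$ and its finite normalizations remain within the excellent/Nagata world with separably (indeed algebraically) closed residue fields, so that the category $\Shdvr(k)$ is stable under the operations used; without perfectness one would have to worry about inseparable residue extensions spoiling the strict henselian property. With Lemma~\ref{lem:slocNormalForm} in hand, the promised consequence — realizing $(-)_{\dvr}$ as a right Kan extension along $\Shdvr(k) \to \Sch(k)^{\ess}$ that therefore preserves $\qfh$-sheaves — follows formally from the general nonsense on cocontinuous functors and pushforward of sheaves, and this is what feeds into Proposition~\ref{prop:qfhDescent}.
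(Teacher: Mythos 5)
Your argument is essentially the one in the paper: reduce a $\qfh$-cover of $W=\Spec R^{\sh}$ to the Suslin--Voevodsky normal form (the paper cites \cite[Lem.~10.4]{SuslinVoevodsky1996}, which already produces a \emph{Zariski} cover on top of the normalization of $W$ in a finite normal extension of its function field, so the splitting of the \'etale part that you carry out by hand is subsumed there), note that excellence of $R^{\sh}$ (via \cite[Cor.~5.6]{Greco1976}) makes that lemma applicable, and then check that the finite normalization stays inside $\Shdvr(k)$.

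The one soft spot is the last step. $\Shdvr(k)$ is defined by a presentation --- schemes of the form $\Spec R^{\sh}$ with $\Spec R\in\Dvr(k)$ --- not by intrinsic properties. Your argument shows that the normalization $V$ of $R^{\sh}$ in a finite extension is a strictly henselian DVR (finite local algebras over henselian local rings are henselian local; the residue field extension is purely inseparable over a separably closed field, hence still separably closed; normality plus dimension one plus Noetherianity via excellence gives a DVR), but it does not show that $V$ is literally the strict henselization of some discrete valuation ring essentially of finite type over $k$, which is what membership in $\Shdvr(k)$ requires. The paper closes exactly this gap by invoking \cite[Thm.~5.3~i)]{Greco1976} after observing that $V\to W$ is absolutely flat. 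So your proof is correct in structure and matches the paper's, but this membership claim needs either that citation or an argument exhibiting the required presentation of $V$; the general henselian-ring facts you list do not by themselves supply it.
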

\begin{proof} Let $X$ be  in $\Shdvr(k)$. We have to show that every $\qfh$-cover  $\{ U_i \rightarrow X \}$  in $\Sch(k)^{\ess}$ has a refinement to a $\qfh$-cover in $\Shdvr(k)$. Without loss of generality, we may assume that $X$ is connected.  Moreover, by \cite[Cor.~5.6]{Greco1976}  any object in $\Shdvr(k)$ is excellent. Hence according to \cite[Lem.~10.4]{SuslinVoevodsky1996}, every $\qfh$-cover as above  has a refinement of the form $\{V_j\rightarrow V\rightarrow X\}_{j\in J}$, where $V$ is the normalization of $X$ in a finite normal extension of its function field and $\{V_j \to V\}$ is a Zariski cover. In particular  $V \rightarrow X$ is absolutely flat. With \cite[Thm.~5.3~i)]{Greco1976} we can now deduce that $V$ is also the strict henselisation of a discrete valuation ring. 
\end{proof}

\begin{corollary}\label{cor:qfhSheaf}
For a perfect field $k$ characteristic $p>0$, if $\F$ is a $\qfh$-sheaf on $\Shdvr(k)$, then $\F_{\shdvr}$ is a $\qfh$-sheaf. 
\end{corollary}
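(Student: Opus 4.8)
The plan is to deduce this formally from Lemma~\ref{lem:slocNormalForm} in exactly the way \cite[Lem.~4.4]{HuberKebekusKelly2016} follows from the cocontinuity of the inclusion $\Reg(S)\to\Sch(S)$ for topologies no finer than the \'etale one. Write $i\colon\Shdvr(k)\to\Sch(k)^{\ess}$ for the inclusion. As recalled just before the corollary, for a presheaf $\F$ on $\Shdvr(k)$ the presheaf $\F_{\shdvr}$ is the right Kan extension of $\F$ along $i$ --- equivalently, the value on $\F$ of the right adjoint ${}_p i$ of the restriction functor $i^{p}$ on presheaf categories --- and then, if one wants it on $\Sch(k)$, its further restriction there.

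By Lemma~\ref{lem:slocNormalForm}, $i$ is a cocontinuous morphism of $\qfh$-sites, so in particular it is both continuous and cocontinuous for $\qfh$. Cocontinuity gives that $i^{p}$ carries $\qfh$-sheaves to $\qfh$-sheaves; continuity gives that $i^{p}$ is compatible with $\qfh$-sheafification. Consequently the functor $i^{p}\colon\mathrm{Sh}_{\qfh}(\Sch(k)^{\ess})\to\mathrm{Sh}_{\qfh}(\Shdvr(k))$ preserves colimits --- colimits of sheaves being sheafified presheaf colimits, on which $i^{p}$ is exact --- and hence admits a right adjoint, which a one-line computation with the presheaf adjunction $i^{p}\dashv{}_p i$ identifies with ${}_p i$ restricted to sheaves. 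Thus ${}_p i\,\F=\F_{\shdvr}$ is a $\qfh$-sheaf on $\Sch(k)^{\ess}$ whenever $\F$ is one on $\Shdvr(k)$. To land on $\Sch(k)$ I would observe that every $\qfh$-cover of a finite type $k$-scheme is refined by one built from \'etale covers and finite surjections, which remain of finite type; hence $\Sch(k)\to\Sch(k)^{\ess}$ is cocontinuous for $\qfh$, so restriction along it preserves $\qfh$-sheaves and $\F_{\shdvr}$ is a $\qfh$-sheaf on $\Sch(k)$.

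The real work is already done in Lemma~\ref{lem:slocNormalForm}, which supplies the only geometric input --- a normal form for $\qfh$-covers of spectra of strictly henselian discrete valuation rings (and of separably closed fields). The points that still deserve attention are formal rather than substantial: one must match up the conventions for the $\qfh$-site structures on $\Shdvr(k)$, $\Sch(k)^{\ess}$ and $\Sch(k)$, and one must use that $i$ is a genuine \emph{morphism} of $\qfh$-sites --- that is, continuous as well as cocontinuous --- since cocontinuity alone only makes the restriction functor $i^{p}$ preserve sheaves, whereas it is the right Kan extension ${}_p i$, going the other way, that computes $\F_{\shdvr}$. Continuity of $i$ in turn is precisely the statement, already implicit in Lemma~\ref{lem:slocNormalForm}, that a $\qfh$-cover of an object of $\Shdvr(k)$ and the relevant fibre products of its members in $\Sch(k)^{\ess}$ can be refined inside $\Shdvr(k)$.
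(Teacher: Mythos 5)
Your argument is correct and is essentially the paper's own (one-line) proof: the corollary is deduced from the cocontinuity established in Lemma~\ref{lem:slocNormalForm} via the standard fact that the right Kan extension along a cocontinuous functor of sites carries sheaves to sheaves, exactly as \cite[Lem.~4.4]{HuberKebekusKelly2016} follows from cocontinuity of $\Reg(S)\to\Sch(S)$. One small correction: you have the roles of continuity and cocontinuity interchanged --- it is \emph{continuity} that makes $i^{p}$ preserve sheaves and \emph{cocontinuity} that makes $\mathcal{G}\mapsto (i^{p}\mathcal{G})^{\#}$ exact and left adjoint to ${}_{p}i$ on sheaf categories; in fact cocontinuity alone already yields that ${}_{p}i\,\F$ is a $\qfh$-sheaf, so the appeal to continuity at the end is superfluous rather than a gap.
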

\begin{proof}
This follows from the previous lemma analogous to \cite[Lem.~4.4]{HuberKebekusKelly2016}.
\end{proof}

Note that $\Shdvr(k)$ is not a subcategory of $\Dvr(k)$, but the objects of $\Shdvr(k)$ are colimits of \'etale extensions of objects in $\Dvr(k)$. None the less, we will compare $\F_{\dvr}$ to the the right Kan extension along $\Shdvr(k) \to \Sch(k)^{\ess}$ given by 
	$$
	\F_{\shdvr}(X) := \varprojlim_{ Y \in \Shdvr(X)} \F(Y)
	$$ 
for \'etale sheaves $\F$ which commute with filtered colimits.

\begin{lemma}\label{lem:NewHKK4.4}
Let $k$ be a perfect field of characteristic $p>0$. For $X\in \Sch(k)^{\ess}$ and  any \'etale sheaf $\F$ on $\Sch(k)$ which commutes with filtered colimits there is a natural morphism  $\F_{\dvr}(X) \xrightarrow{\sim} \F_{\shdvr}(X)$ which is an isomorphism
\end{lemma}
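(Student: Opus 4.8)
The plan is to compare the two inverse limits $\F_{\dvr}(X) = \varprojlim_{Y \in \Dvr(X)} \F(Y)$ and $\F_{\shdvr}(X) = \varprojlim_{Y \in \Shdvr(X)} \F(Y)$ by exhibiting a natural transformation and then checking it is an isomorphism via a cofinality-type argument on the indexing categories. First I would observe that there is a natural functor from the opposite of $\Dvr(X)$ to the opposite of $\Shdvr(X)$ — or rather, I would set things up so that each object $W = \Spec R$ of $\Dvr(X)$ maps to $\Spec R^{\sh}$ in $\Shdvr(X)$, equipped with the natural structure map to $X$. Using equation~(\ref{DvrSheaf}) and the fact that $\F$ is unramified (so that $\F_{\dvr}$ is computed as a limit over $\Dvr(X)$ of the essentially-of-finite-type extension, which by the hypothesis that $\F$ commutes with filtered colimits agrees with $\F$ itself on these essentially-of-finite-type local schemes), this gives a canonical map $\F_{\dvr}(X) \to \F_{\shdvr}(X)$: the strict henselization $\Spec R^{\sh} \to \Spec R$ is a filtered limit of étale neighborhoods, and since $\F$ is an étale sheaf commuting with filtered colimits, $\F(\Spec R^{\sh}) = \varinjlim \F(\text{\'etale nbhds})$, each of which receives a compatible map from $\F(\Spec R) \supseteq \F_{\dvr}(X)$; one checks compatibility of these over morphisms in $\Shdvr(X)$.

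For surjectivity and injectivity, the key point is that every object of $\Shdvr(X)$ is, as noted in the paragraph preceding the lemma, a filtered colimit of étale extensions of an object of $\Dvr(X)$, hence of the form $\Spec R^{\sh}$ with $\Spec R \in \Dvr(X)$. I would argue that the assignment $W \mapsto W^{\sh}$ realizes (the opposite of) $\Shdvr(X)$ as essentially a localization of (the opposite of) $\Dvr(X)$ at the étale morphisms, and that since $\F$ is an étale sheaf, these localized morphisms become isomorphisms after applying $\F$; more precisely, an étale morphism $\Spec R' \to \Spec R$ of dvrs over $X$ need not be covered componentwise, but after strict henselization $\Spec (R')^{\sh} = \Spec R^{\sh}$ when $\Spec R' \to \Spec R$ is an étale neighborhood, so the transition maps in the $\Shdvr$-limit are among those already present (in the colimit sense) in the $\Dvr$-limit. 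Concretely I would show: (a) every compatible system over $\Dvr(X)$ extends uniquely to a compatible system over $\Shdvr(X)$, by defining its value on $\Spec R^{\sh}$ as the image in $\varinjlim \F(\text{\'etale nbhds of } \Spec R)=\F(\Spec R^{\sh})$ of the given value on $\Spec R$, and checking this is well-defined and compatible; (b) conversely a compatible system over $\Shdvr(X)$ restricts along $W \mapsto W^{\sh}$ to one over $\Dvr(X)$, giving the inverse map. These two constructions are mutually inverse essentially by construction, using that $\F(\Spec R) \to \F(\Spec R^{\sh})$ need not be injective but the ambient limit $\F_{\dvr}(X)$ factors through the compatible choices.

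The main obstacle I anticipate is the bookkeeping around the indexing categories: $\Dvr(X)$ and $\Shdvr(X)$ are not cofinal in one another in the naive sense (a generic object of $\Shdvr(X)$ is a filtered limit, not a single object, of objects over $\Dvr(X)$), so the cleanest route is probably to pass through an intermediate description — e.g. writing $\F_{\shdvr}(X) = \varprojlim_{W \in \Dvr(X)} \F(W^{\sh}) = \varprojlim_{W \in \Dvr(X)} \varinjlim_{W' \to W \text{ \'etale nbhd}} \F(W')$ and then using that the étale-neighborhood colimit is filtered together with the assumption that $\F$ commutes with filtered colimits and is an étale sheaf to identify this double limit with $\varprojlim_{W \in \Dvr(X)} \F(W)$. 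Making rigorous the claim that the filtered colimit over étale neighborhoods does not change the inverse limit — i.e. that one may interchange or absorb it — is where care is needed; one should check that the relevant diagram of dvrs and their étale neighborhoods over $X$ is cofiltered enough for the interchange, which ultimately rests on excellence (via \cite[Cor.~5.6]{Greco1976}) and the structure of strict henselizations of dvrs already invoked in Lemma~\ref{lem:slocNormalForm}. The remainder is routine verification of naturality in $X$.
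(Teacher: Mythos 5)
Your construction of the natural map $\F_{\dvr}(X) \to \F_{\shdvr}(X)$ matches the paper's: write $R^{\sh}$ as a filtered colimit of \'etale neighbourhoods $A$ of $R$, use that $\F$ commutes with filtered colimits to get $\F(\Spec R^{\sh}) \cong \varinjlim \F(\Spec A)$, and map the $\Dvr$-limit in via its projections. (The paper additionally invokes \cite[Cor.~2.17]{HuberKelly2017} so that the rings $A$ in this colimit may be taken to be valuation rings, hence objects of $\Dvr(X)$; without that reduction the projections $\F_{\dvr}(X)\to\F(\Spec A)$ are not available, so this is not a point you can skip.)

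The genuine gap is in your step (b), the construction of the inverse. A compatible system over $\Shdvr(X)$ assigns $t_W \in \F(W)$ for $W = \Spec R^{\sh}$, and ``restricting along $W \mapsto W^{\sh}$'' does not produce an element of $\F(\Spec R)$: the only map available is $\F(\Spec R) \to \F(\Spec R^{\sh})$, which points the wrong way. What is actually needed --- and what the paper supplies --- is a descent argument: the automorphisms of $\Spec R^{\sh}$ over $X$ are morphisms in $\Shdvr(X)$, so compatibility forces $t_{R^{\sh}}$ to be fixed by $\Gal(\Frac(R^{\sh})/\Frac(R))$, and \'etale (Galois) descent for $\F$ then shows this invariant element lies in the image of $\F(R) \hookrightarrow \F(R^{\sh})$. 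Two of your supporting claims are also off: the ``localized'' \'etale morphisms do \emph{not} become isomorphisms after applying $\F$ (for a nontrivial \'etale neighbourhood $R \to R'$ of a dvr, $\F(R)\to\F(R')$ is injective but generally not surjective, e.g.\ for $\F=\OO$), and $\F(\Spec R)\to\F(\Spec R^{\sh})$ \emph{is} injective (an \'etale neighbourhood of the closed point of a local scheme is automatically an \'etale cover), contrary to your parenthetical. Finally, your diagnosis that the remaining difficulty is a cofilteredness/interchange-of-limits issue resting on excellence misidentifies the mechanism: the identification $\varprojlim_{W}\varinjlim_{A}\F(A)\cong\varprojlim_{W}\F(W)$ is precisely Galois descent together with injectivity of $\F(\kappa)\to\F(\lambda)$ for finite separable $\lambda/\kappa$ (the latter also giving injectivity of the natural map, which your sketch leaves implicit); excellence enters only in Lemma~\ref{lem:slocNormalForm}, not here.
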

\begin{proof}
Recall that a sheaf which commutes with filtered colimits satisfies $\F^{\ess} \cong  \F$ as presheaves. We first define a natural transformation $\F_{\dvr} \rightarrow \F_{\shdvr}$. We may write an  object  $W\in \Shdvr(X)$ as  $W = \Spec R^{\sh}$ where $R\in\Dvr(X)$  and  
	$$
	R^{\sh} = \varinjlim_{R \rightarrow A \rightarrow \kappa^s} A
	$$ 
is  the colimit  over all factorizations of $R \rightarrow \kappa^s$ for a separable closure  $\kappa^s$ of the field $\kappa$ corresponding to the generic point of $\Spec R$  such that $R \to A$ is  \'etale. Since $R$ is a valuation ring of finite Krull dimension, it suffices by  \cite[Cor.~2.17]{HuberKelly2017}  to consider the valuation rings $A$ in the above colimit. In this case $A$ is again in $\Dvr(X)$.

By the hypothesis that $\F$ commutes with colimits we have
	$$
	\F(W) = \F(\Spec R^{\sh}) \cong  \varinjlim_{R \rightarrow A \rightarrow \kappa^s} \F(\Spec A).
	$$ 
 Since we have natural projections $\F_{\dvr}(X) \rightarrow \F(\Spec A)$ for each $\Spec A\in \Dvr(X)$, there is a morphism $\F_{\dvr}(X) \to \F(W)$. These maps are compatible by functoriality  as $W$ varies over $\Shdvr(X)$. Consequently  there is a morphism 
 	$$
 	\F_{\dvr}(X) \rightarrow \F_{\shdvr}(X)  \cong  \varprojlim_{W \in \Shdvr(X)} \F(W)
 	$$ 
 which itself is obviously functorial as desired.  

The rest of the proof follows mutatis mutandis from \cite[Prop.~3.8]{HuberKelly2017}, however we explain the critical changes. Fix a section $s$ in $\F_{\dvr}(X)$ mapping to the zero section in $\F_{\shdvr}(X)$. For a geometric point $x \in X$ with residue field $\kappa$ and a separable closure $\kappa^s$, we have $\F(\kappa^s) = \varinjlim_{\lambda/\kappa} \F(\lambda)$  where $\lambda$ runs over all finite subextensions of $\kappa^s/\kappa$. Because $s\big|_{\kappa^s}$ is zero by assumption, there is $\lambda$ such that $s\big|_{\lambda} = 0$. By \'etale descent, $\F(\kappa) \rightarrow \F(\lambda)$ is injective for all $\lambda$, and hence $s\big|_\kappa = 0$ as desired. A similar argument works for valuation rings of $\kappa$ and their strict henselisations. This shows that the morphism $\F_{\dvr}\rightarrow \F_{\shdvr}$ defined above is injective.

 Now fix $t \in \F_{\shdvr}(X)$. For any $Y = \Spec R \to X$ let $R^{\sh}$ be its strict henselisation. The element $t_{R^{\sh}}$ is fixed by $\Gal(\Frac(R^{\sh})/\Frac(R))$, so lifts to $\F(R)$ in a compatible way to define a preimage. 
\end{proof}

\begin{remark}
The proofs of Lemma~\ref{lem:slocNormalForm} and Lemma~\ref{lem:NewHKK4.4} also work over Nagata base schemes, but this will not be needed. 
\end{remark}

With these facts established we can now prove $\qfh$-descent for the extension $W\Omega^n_{\QQ,\dvr}$ of the sheaf of Witt differentials following an approach used in the study of regular group schemes \cite[Prop.~C.2]{AnconaHuberPepinLehalleur2016}.

\begin{proposition}\label{prop:qfhDescent}
 For a perfect field $k$ of characteristic $p > 0$, the extension $W\Omega^n_{\QQ,\dvr}$ is a $\qfh$-sheaf on $\Sch(k)$.
\end{proposition}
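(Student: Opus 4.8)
I would realise $W\Omega^n_{\QQ,\dvr}$ as a right Kan extension along $\Shdvr(k)\to\Sch(k)^{\ess}$, reduce the $\qfh$-sheaf condition to covers in Suslin--Voevodsky normal form, and split the latter into a Zariski, a Galois and a purely inseparable part, the last of which is the heart of the matter. Since $W\Omega^n_{\QQ}$ is an \'etale sheaf commuting with filtered colimits --- by Proposition~\ref{prop:Unramified} and \cite[I.1.10]{Illusie1979}, as already used in Corollary~\ref{cor:wittdvr} --- Lemma~\ref{lem:NewHKK4.4} gives a natural isomorphism $W\Omega^n_{\QQ,\dvr}\cong W\Omega^n_{\QQ,\shdvr}$, so by Corollary~\ref{cor:qfhSheaf} it suffices to prove that $W\Omega^n_{\QQ}$ restricted to $\Shdvr(k)$ is a $\qfh$-sheaf, i.e.\ to verify the sheaf axiom on a cofinal family of $\qfh$-covers of a connected $X\in\Shdvr(k)$. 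By Lemma~\ref{lem:slocNormalForm} we may take covers of the form $\{V_j\to V\to X\}$ with $\{V_j\to V\}$ a Zariski cover and $V\to X$ the normalisation of $X$ in a finite normal extension $L/K$ of its function field, $V$ again in $\Shdvr(k)$; since $W\Omega^n_{\QQ}$ is a Zariski sheaf, the standard reduction to the intermediate maps (as in the proof of \cite[Thm.~3.6]{HuberJoerder2014}) leaves only the finite map $f\colon V\to X$. Writing $K\subseteq L^{\mathrm{sep}}\subseteq L$ for the separable closure of $K$ in $L$, the extension $L^{\mathrm{sep}}/K$ is Galois with group $G$ (because $L/K$ is normal) and $L/L^{\mathrm{sep}}$ is purely inseparable; normalising $X$ in $L^{\mathrm{sep}}$ factors $f$ as $V\to V'\to X$ inside $\Shdvr(k)$, and it suffices to treat $V'\to X$ and $V\to V'$ separately.

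For the Galois map $V'\to X$ I would follow \cite[Prop.~C.2]{AnconaHuberPepinLehalleur2016}: on the strictly henselian valuation rings $\OO_X=S_0'\hookrightarrow S_0=\OO_{V'}$ one has $\OO_{V'}^{G}=\OO_X$ by normality, the cover is generically a $G$-torsor, and because $W\Omega^n_{\QQ}$ is a sheaf of $\QQ$-vector spaces the averaging operator $\tfrac{1}{|G|}\sum_{\sigma\in G}\sigma^{\ast}$, together with the vanishing of higher cohomology of the finite group $G$ with $\QQ$-coefficients, identifies $W\Omega^n_{\QQ}(X)$ with $W\Omega^n_{\QQ}(V')^{G}$ and with the equaliser of the two projections from $W\Omega^n_{\QQ}(V'\times_X V')$ --- even in the wildly ramified case, where $V'\times_X V'$ is non-reduced.

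The purely inseparable map $f\colon V\to V'$ is the main obstacle, and here the $p$-typical structure of the de~Rham--Witt complex enters through the Frobenius $\Frob$ and Verschiebung $\Ver$. Write $\OO_{V'}=S_0\hookrightarrow S_1=\OO_V$; purely inseparable means $S_1^{p^r}\subseteq S_0$ for some $r$, so the $r$-fold absolute ring Frobenius of $S_0$ (resp.\ of $S_1$) factors as $S_0\hookrightarrow S_1\xrightarrow{x\mapsto x^{p^r}}S_0$ (resp.\ $S_1\xrightarrow{x\mapsto x^{p^r}}S_0\hookrightarrow S_1$), and $V\times_{V'}V$ is, in a suitable coordinate, obtained from $V$ by adjoining an element $z$ with $z^{p^r}=0$ along the diagonal. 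I would first show that $f^{\ast}\colon W\Omega^n_{\QQ}(V')\to W\Omega^n_{\QQ}(V)$ is injective, combining the Frobenius factorisation with injectivity of the absolute Frobenius on $W\Omega^n_{\QQ}$ of regular rings --- for which the unramifiedness from Proposition~\ref{prop:Unramified} reduces one to the generic point --- and then, crucially, that the two projections $p_1^{\ast},p_2^{\ast}\colon W\Omega^n_{\QQ}(V)\rightrightarrows W\Omega^n_{\QQ}(V\times_{V'}V)$ have equaliser exactly the image of $f^{\ast}$. The difficulty is that $W\Omega^n_{\QQ}$ does not kill nilpotents, so $W\Omega^n_{\QQ}(V\times_{V'}V)$ is in general strictly larger than $W\Omega^n_{\QQ}(V)$ and $p_1^{\ast}\neq p_2^{\ast}$; passing from $p_1^{\ast}$ to $p_2^{\ast}$ amounts to the infinitesimal substitution $u\mapsto u+z$ of a local coordinate, and $p_1^{\ast}\omega=p_2^{\ast}\omega$ forces the vanishing of all ``higher differences'' of $\omega$ along $z$. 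Using the relations linking $\Frob$, $\Ver$ and the absolute ring Frobenius --- in particular $\Frob\,\Ver=p$, invertible over $\QQ$, and $\Frob\,d[x]=[x]^{p-1}d[x]$ --- one identifies the part of $W\Omega^n_{\QQ}(V)$ cut out by this vanishing with the image of $f^{\ast}$, which gives the sheaf axiom for $f$. Assembling the Zariski, Galois and purely inseparable cases yields the sheaf axiom for $\{V_j\to V\to X\}$, and with it the proposition.
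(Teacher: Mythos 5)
Your overall skeleton matches the paper's: reduce via Lemma~\ref{lem:NewHKK4.4} and Corollary~\ref{cor:qfhSheaf} to the $\qfh$-sheaf condition for $W\Omega^n_{\QQ}$ on $\Shdvr(k)$, pass to Suslin--Voevodsky normal form, discard the Zariski layer, and split the finite normal cover into a generically Galois and a purely inseparable piece, with Frobenius and Verschiebung ($\Frob\Ver=p$) doing the work in the inseparable case. But there is a genuine gap at the step you yourself identify as ``the heart of the matter.'' You propose to verify the equaliser condition against the full, non-reduced fibre product $V\times_{V'}V$ and to ``identify the part of $W\Omega^n_{\QQ}(V)$ cut out by the vanishing of higher differences with the image of $f^{\ast}$''; this computation is not carried out, and it is exactly the sheaf axiom you are trying to prove, restated. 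The de~Rham--Witt complex of a non-reduced ring is not something you can control by the substitution $u\mapsto u+z$ alone, and no relation among $\Frob$, $\Ver$ and $d[x]$ is offered that actually pins down the equaliser. The paper sidesteps this entirely: it first shows $W\Omega^n_{\QQ}\big|_{\Shdvr(k)}$ is $\qfh$-\emph{separated} (injectivity of $W\Omega^n_{\QQ}(Y)\to W\Omega^n_{\QQ}(X)$ for dominant $X\to Y$, plus \cite[Prop.~3.1.4]{Voevodsky1996}). Separatedness lets one replace $V\times_X V$ by the normalization $\widetilde V$ of its reduction in the sheaf sequence; for a purely inseparable cover the reduced diagonal is then an isomorphism, so the difference of the two projections is the zero map and the only thing left is \emph{surjectivity} of $\pi^{\ast}$, which follows from Koll\'ar's factorization of the relative Frobenius and the invertibility of $p$. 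Without the separatedness step your reduction to the reduced situation is unavailable, and the inseparable case does not close.

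A secondary issue is the Galois case: the averaging operator $\tfrac{1}{|G|}\sum_{\sigma}\sigma^{\ast}$ only produces a descent datum if you already have a transfer $W\Omega^n_{\QQ}(V')\to W\Omega^n_{\QQ}(X)$, which you do not construct, and in the wildly ramified case $A\to B=\OO_{V'}$ is not \'etale, so \'etale descent does not apply directly to the valuation rings. The paper instead applies Galois descent at the generic points (where it is genuine \'etale descent) and then lifts the resulting section from $W\Omega^n_{\QQ}(k(X))$ back to the valuation ring by an inductive argument with the exact sequence of \cite[I.3.15]{Illusie1979}, using unramifiedness to see the intersection $W\Omega^n_{\QQ}(k(X))\cap W\Omega^n_{\QQ}(V)$ inside $W\Omega^n_{\QQ}(k(V))$. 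You should supply this lifting step; as written, your Galois paragraph asserts the conclusion rather than proving it.
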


\begin{proof}
By Lemma~\ref{lem:NewHKK4.4}, $W\Omega_{\QQ,\dvr}^n \cong  W\Omega_{\QQ,\shdvr}^n$ and thus it is equivalent to show that $W\Omega_{\QQ,\shdvr}^n$ is a $\qfh$-sheaf. But according to Corollary~\ref{cor:qfhSheaf} this follows if we show that $W\Omega^n_{\QQ}$ is a $\qfh$-sheaf on $\Shdvr(k)$. 

To see this we check the sheaf condition for a $\qfh$-cover $\{U_i\rightarrow X\}_{i\in I}$ in $\Shdvr(k)$. Without loss of generality, we may assume that $X$ is connected and therefore integral as it is Noetherian. As in the proof of Lemma~\ref{lem:slocNormalForm}, we may replace this with a refinement of the form $\{V_j\rightarrow V\rightarrow X\}_{j\in J}$, where $V$ is the normalization of $X$ in a finite normal extension of its function field. In particular, $V\rightarrow X$ is finite surjective and $\{V_j\rightarrow V\}$ is a Zariski cover.  As $W\Omega^n_{\QQ}$ is a Zariski sheaf,  it suffices to show that the  sheaf condition holds for the cover $\pi:V\rightarrow X$.

To begin with, we argue as in \cite{AnconaHuberPepinLehalleur2016} that $W\Omega^n_{\QQ}\big|_{\Shdvr(k)}$ is separated for the $\qfh$-topology. To see this we first observe that for a dominant morphism $f:X\rightarrow Y$ the induced morphism $W\Omega_{\QQ}^n(Y)\rightarrow W\Omega_{\QQ}^n(X)$ is injective, because $\OO(Y)\rightarrow\OO(X)$ is, and then apply a result due to Voevodsky \cite[Prop.~3.1.4]{Voevodsky1996}. 

Let $\widetilde{V}$ be the normalization of the reduction of $V\times_X V$ and consider the sequence
\begin{equation}\label{ExactSeq?}
0\rightarrow W\Omega^n_{\QQ}(X)\rightarrow W\Omega^n_{\QQ}(V) \rightarrow W\Omega^n_{\QQ}(\widetilde{V})
\end{equation}
where the rightmost map is induced by the difference of the two projections $\widetilde{V}\rightarrow V$. Our goal is to show this sequence is exact. 

We may find a finite surjective morphism $\pi':Y \rightarrow V$ with $Y$ normal, such that the composition with $\pi:V\rightarrow X$ factors into a generically purely inseparable morphism followed by a generically Galois morphism $\pi'\pi=\pi_s\pi_i.$ Since $W\Omega_{\QQ}^n$ is $\qfh$-separated, the sheaf condition for $\pi'\pi$ follows if we can verify the sheaf condition for $\pi_i$ and $\pi_s$ separately, and this on the other hand implies the sheaf condition for $\pi$.

Thus assume first that  $\pi:V\rightarrow X$ is a generically Galois morphism in $\Shdvr(k)$ with Galois group $\Gal(\kappa(V)/\kappa(X))=\Gamma$. By definition $V$ is the normalization of $X$ in $\kappa(V)$ and the action of $\Gamma$ extends by functoriality from $\kappa(V)$ to  $V$. Furthermore, the canonical map  $\OO_X\xrightarrow{\sim}(\pi_\ast\OO_V)^\Gamma$ is an isomorphism. With $X=\Spec A$ and $V=\Spec B$ we observe that $\pi$ corresponds to an extension $A\rightarrow B$ with $A=B^\Gamma$, where $A$ and $B$ are strict henselisations of discrete valuation rings, hence in  particular valuation rings, and we have a commutative diagram
	$$
	\xymatrix{A \ar[r]^\pi \ar[d] & B \ar[d]\\ k(X) \ar[r]^\pi & k(V)}
	$$
Our goal is to show that the sequence (\ref{ExactSeq?}) is exact in this case. Since $W\Omega^n_{\QQ}$ is an \'etale sheaf and therefore satisfies Galois descent, we deduce immediately that that the sequence
	\begin{equation}\label{ExactSeq!}
	0\rightarrow W\Omega^n_{\QQ}(k(X))\rightarrow W\Omega^n_{\QQ}(k(V)) \rightarrow W\Omega^n_{\QQ}(k(V)\otimes_{k(X)}k(V))
	\end{equation}
is exact. Moreover, the canonical morphisms induce a commutative diagram 
	$$
	 \xymatrix{W\Omega^n_{\QQ}(X) \ar@{^{(}->}[d] \ar[r]^\pi & W\Omega^n_{\QQ}(V)  \ar@{^{(}->}[d] \\
	 W\Omega^n_{\QQ}(k(X)) \ar@{^{(}->}[r]^\pi & W\Omega^n_{\QQ}(k(V))  }
	 $$
where the vertical maps are injective because $W\Omega^n_{\QQ}$ is unramified. It follows that $W\Omega^n_{\QQ}(X)\xrightarrow{\pi} W\Omega^n_{\QQ}(V)$ is injective too, and that (\ref{ExactSeq?}) is exact at the first term. 

To show that it is exact at the second term, let $\omega$ be a section  in the kernel of $W\Omega^n_{\QQ}(V) \rightarrow W\Omega^n_{\QQ}(\widetilde{V})$. We have to show that it has a preimage under $\pi$. Its restriction $\omega_{k(V)}$ to  $\Spec k(V)$, i.e., its image under the canonical injective map $W\Omega^n_{\QQ}(V) \hookrightarrow  W\Omega^n_{\QQ}(k(V))$ has a preimage $\upsilon_{k(X)}$ under $\pi$ by the short exact sequence (\ref{ExactSeq!}). But $\upsilon_{k(X)}$ is in fact contained in $W\Omega^n_{\QQ}(k(X))\cap W\Omega^n_{\QQ}(V)$ and as $V\rightarrow X$ corresponds to a Galois extension of valuation rings, one can show by an inductive argument using  the short exact sequence of \cite[I.3.15]{Illusie1979} that  it lifts indeed to a section $\upsilon$ of $W\Omega^n_{\QQ}(X)$ over the valuation ring $A$. This is a preimage of $\omega$ under $\pi$ as desired.

Now we look at the case, when $\pi$ is generically purely inseparable. As $X$ is in $\Shdvr(k)$, $\pi$ is actually purely inseparable as there is only one fiber. Hence its diagonal map is a surjective closed immersion. After reduction, we may assume that it is an isomorphism
$\Delta(\pi): V^{\text{red}}\xrightarrow{\sim} (V\times_X V)^{\text{red}}=\widetilde{V}.$
The map $W\Omega^n_{\QQ}(V)\rightarrow W\Omega^n_{\QQ}(\widetilde{V})$ being induced by the difference of the two projections $\widetilde{V}\rightarrow V$ consequently is the zero map, so that it remains to show that 
$\pi^\ast: W\Omega^n_{\QQ}(X)\hookrightarrow W\Omega^n_{\QQ}(V)$ 
is surjective. If $\pi$ is an isomorphism this is clear. Assume therefore that $\pi$ is not an isomorphism. Similar to \cite{AnconaHuberPepinLehalleur2016} we reduce this to the case where $\pi$ is a relative Frobenius as follows. 

Denote by $V^{(p)}$ the base change of $V$ along the absolute Frobenius $F_k$ of $\Spec(k)$. We have a commutative diagram 
	$$
	\xymatrix{V\ar[d] & V^{(p)} \ar[l]_{W} \ar[d] & V  \ar[dl] \ar[l]_{F_{V/k}} \ar@/_2pc/[ll]_{F_V}\\ k & k\ar[l]^{F_k}}
	$$
where $F_V$ is the absolute Frobenius of $V$, $F_{V/k}$ the relative Frobenius of $V$ over $k$ and $W$ the canonical projection. As $k$ is perfect, $F_k$ is an automorphism and it follows that $V\cong V^{(p)}$ and the relative Frobenius $F_{V/k}$ coincides with the absolute Frobenius $F_V$ up to isomorphism. For a $p$-power $q=p^r$ we obtain a similar diagram by iteration. 

By a result due to Koll\'ar \cite[Prop.~6.6]{Kollar1997} there is a $p$-power $q$ and a morphism $\tilde{\pi}: X\rightarrow V^{(q)}$ such that the relative Frobenius factors as
 $F_{V/k}^{r}=\tilde{\pi}\circ\pi: V\rightarrow X\rightarrow V^{(q)}$.
Thus it suffices to show that $(F_{V/k}^r)^\ast$ is surjective. 

Since $V$ as an irreducible scheme is an $k$-scheme, the Witt vector Frobenius is induced by the absolute Frobenius $F_V^\ast=\Frob: W\Omega^n(V)\rightarrow W\Omega^n(V)$,
which commutes with the Verschiebung map $\Ver$. In particular, $\Frob\Ver=\Ver\Frob=p$. Consequently, after inverting $p$, all of the maps $\Frob=F_V^\ast$, $F_{V/k}^\ast$, and the maps $(F_{V/k}^r)^\ast$ for all $q=p^r$ are surjective. 
\end{proof}

\begin{remark}
One notes from the proof that $W \Omega^n$ is a $\qfh$-sheaf after inverting only the Witt vector Frobenius. In fact, one can use the same argumentation as above for unramified \'etale sheaves $\F$ which commute with colimits such that any finite morphism $V \to X$ of objects in $\Shdvr(k)$ which is generically Galois with Galois group $G$ satisfies $\F(X) \cong \F(V)^G$ and for which the relative Frobenius $F_{V/k}^r$  induces surjections for all $r \geqslant 0$. 
\end{remark}

While our result along with \cite[Lem.~4.4]{HuberKebekusKelly2016}  shows that the extension functor preserves sheaves for certain topologies, Huber, Kebekus, and Kelly prove even a stronger result in \cite[Prop.~4.18]{HuberKebekusKelly2016}. 

\begin{proposition}\label{HKKProp418}
Let $S$ be a Noetherian scheme. If $\F$ is an unramified presheaf on $\Sch(S)$, then $\F_{\dvr}$ is an $\rh$-sheaf. In particular, if $\F$ is an unramified Nisnevich, respectively \'etale sheaf on $\Sch(S)$, then $\F_{\dvr}$ is a $\cdh$-sheaf, respectively $\eh$-sheaf.
\end{proposition}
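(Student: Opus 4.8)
The plan is to reduce the statement to the cocontinuity result already recorded for the inclusion $\Reg(S) \to \Sch(S)$ together with the explicit description of $\F_{\dvr}$ on valuation rings in equation (\ref{DvrSheaf}), and then to verify the sheaf condition directly on the generating covers of the $\rh$-topology. Since the $\rh$-topology is generated by Zariski covers and abstract blow-ups (equivalently, $\cdp$-morphisms), it suffices to check two things: that $\F_{\dvr}$ restricts to a Zariski sheaf on $\Sch(S)$, and that $\F_{\dvr}$ takes abstract blow-up squares $(X' \xrightarrow{f} X, Z \xrightarrow{i} X)$ with exceptional locus $E = f^{-1}(Z)$ to exact sequences
	$$
	0 \to \F_{\dvr}(X) \to \F_{\dvr}(X') \times \F_{\dvr}(Z) \to \F_{\dvr}(E).
	$$
The Zariski sheaf property is inherited because $\F$ is a Zariski sheaf on $\Reg(S)$ by hypothesis (unramified presheaves are Zariski sheaves) and because the limit defining $\F_{\dvr}(X) = \varprojlim_{Y \in \Reg(X)} \F(Y)$ behaves well under Zariski localization: a Zariski cover $\{X_\alpha \to X\}$ induces compatible cofinal systems on the index categories $\Reg(X_\alpha)$.

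The heart of the argument is the blow-up square condition, and here I would pass to the description
	$$
	\F_{\dvr}(X) \cong \varprojlim_{W \in \Dvr(X)} \F^{\ess}(W)
	$$
from (\ref{DvrSheaf}), valid because $\F$ is unramified. The key point is that a discrete valuation ring $W \to X$ essentially of finite type, being local of dimension $\leqslant 1$, has its generic point landing in exactly one of $X \setminus Z$ or $Z$, and its closed point landing somewhere in $X$; using the valuative criterion of properness for $f \colon X' \to X$, any such $W \to X$ lifts essentially uniquely to $W \to X'$ unless $W$ factors through $Z$, in which case it is already accounted for by the $\F_{\dvr}(Z)$ term, with the overlap measured precisely by maps factoring through $E$. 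Chasing these lifting properties through the limit $\varprojlim_{W \in \Dvr(X)}$ and matching them against the corresponding limits for $X'$, $Z$, and $E$ yields exactness of the Mayer--Vietoris sequence. This is essentially the content of \cite[Prop.~4.18]{HuberKebekusKelly2016}, so I would cite it and only sketch the valuative-criterion bookkeeping.

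For the ``in particular'' clause: once $\F_{\dvr}$ is known to be an $\rh$-sheaf, and given that the inclusion $\Reg(S) \to \Sch(S)$ is cocontinuous for the Nisnevich (resp.\ \'etale) topology as observed in \cite[Lem.~4.4]{HuberKebekusKelly2016}, the extension $\F_{\dvr}$ of an unramified Nisnevich (resp.\ \'etale) sheaf is simultaneously a Nisnevich (resp.\ \'etale) sheaf and an $\rh$-sheaf; since the $\cdh$-topology (resp.\ $\eh$-topology) is generated by Nisnevich (resp.\ \'etale) covers together with abstract blow-ups, a presheaf that is a sheaf for both generating families is a sheaf for the generated topology, so $\F_{\dvr}$ is a $\cdh$-sheaf (resp.\ $\eh$-sheaf). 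The main obstacle is the blow-up square exactness: controlling how discrete valuation rings mapping to $X$ interact with the abstract blow-up, particularly handling those $W$ whose closed point lands in $Z$ but whose generic point does not, which is exactly where the valuative criterion does the work.
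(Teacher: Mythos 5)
The paper gives no proof of this statement at all---it is quoted directly from \cite[Prop.~4.18]{HuberKebekusKelly2016}---and your sketch accurately reproduces the strategy of that reference: reduce to Zariski covers plus abstract blow-up squares via the cd-structure presentation of the $\rh$-topology, describe $\F_{\dvr}$ through $\varprojlim_{W\in\Dvr(X)}\F^{\ess}(W)$, and use the valuative criterion of properness to sort each $W\in\Dvr(X)$ into those factoring through $Z$ and those lifting uniquely to $X'$. Since you, like the paper, ultimately defer the decisive blow-up-square bookkeeping to \cite[Prop.~4.18]{HuberKebekusKelly2016}, your proposal is correct and takes essentially the same (citation-based) route.
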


As a consequence we immediately obtain the following statement.

\begin{corollary}
For a perfect field $k$ of positive characteristic $p$, the presheaves $W\Omega^n_{\dvr}$ and $W\Omega^n_{\QQ,\dvr}$  are $\eh$-sheaves on $\Sch(k)$. 
\end{corollary}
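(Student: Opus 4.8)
The plan is to deduce this directly from Proposition~\ref{HKKProp418}, once we know that $W\Omega^n$ and $W\Omega^n_{\QQ}$ are unramified étale sheaves on $\Sch(k)$. The unramifiedness is precisely the content of Proposition~\ref{prop:Unramified}: there we showed that $W\Omega^n$ and $W\Omega^n_{\QQ}$ are unramified on $\Reg(k)$, and by Definition~\ref{def:Unramified} a presheaf on $\Sch(S)$ is declared unramified exactly when its restriction to $\Reg(S)$ is, so both presheaves qualify as unramified presheaves on $\Sch(k)$.

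It therefore remains only to record that $W\Omega^n$ and $W\Omega^n_{\QQ}$ are genuine étale sheaves on $\Sch(k)$, not merely presheaves. For $W\Omega^n$ this is built into Illusie's construction recalled in the preliminaries: $W\Omega^{\sbt}_X$ is a complex of étale sheaves, so each term $W\Omega^n$ is an étale sheaf. For the rational version we write $W\Omega^n_{\QQ} = W\Omega^n\otimes_{\ZZ}\QQ$ and express $\QQ = \varinjlim_m \tfrac1m\ZZ$ as a filtered colimit along the multiplication maps; since for $X\in\Sch(k)$ the small étale site is essentially small and coherent, filtered colimits of étale sheaves of abelian groups are again étale sheaves, and hence $W\Omega^n_{\QQ}$ is an étale sheaf. (Equivalently, one may invoke that $W\Omega^n_{\QQ}$ already commutes with filtered colimits, which was used in the proof of Corollary~\ref{cor:wittdvr}.)

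With these two inputs in hand, Proposition~\ref{HKKProp418} applies verbatim: for an unramified étale sheaf $\F$ on $\Sch(k)$ the extension $\F_{\dvr}$ is an $\eh$-sheaf. Taking $\F = W\Omega^n$ and $\F = W\Omega^n_{\QQ}$ yields the corollary. There is essentially no obstacle in this argument; the only point deserving a word of care is the sheaf — as opposed to merely presheaf — property of the rational Witt differentials, which, as just explained, follows from the exactness of localization together with the compatibility of the étale topos of a Noetherian scheme with filtered colimits.
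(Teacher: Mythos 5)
Your argument is correct and is exactly the route the paper takes: the corollary is stated there as an immediate consequence of Proposition~\ref{HKKProp418} applied to the unramified étale sheaves $W\Omega^n$ and $W\Omega^n_{\QQ}$, with unramifiedness supplied by Proposition~\ref{prop:Unramified}. Your extra remark justifying that $W\Omega^n_{\QQ}$ remains an étale sheaf (as a filtered colimit of étale sheaves on a Noetherian, hence coherent, site) is a detail the paper leaves implicit, and it is handled correctly.
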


\begin{remark}\label{Rem:4.6}
By the universal property of sheafification there are canonical morphisms
	 $$
	 W\Omega_{\QQ}^n\rightarrow W\Omega_{\QQ,\rh}^n \rightarrow W\Omega_{\QQ,\cdh}^n\rightarrow W\Omega_{\QQ,\eh}^n\rightarrow W\Omega_{\QQ,\dvr}^n.
	 $$ 
Moreover, if $X$ is regular, $W\Omega_{\QQ}^n(X) \cong W\Omega_{\QQ,\dvr}^n(X)$ by \cite[Rem.~4.3.3]{HuberKebekusKelly2016} and the above composition is an isomorphism on $X$. 
\end{remark}

To close this section we put the pieces together and deduce $\h$-descent for $W\Omega^n_{\QQ,\dvr}$.  The following lemma  is well-known to eperts and follows from \cite[Thm.~3.1.1]{Voevodsky1996} and Stein factorization. 

\begin{lemma}\label{lemHRefine} 
Let $X \in \Sch(S)$ for a base scheme $S$. Every $\h$-cover $\{U_i\xrightarrow{p_i} X\}_i$ has a refinement of the form 
$$\{V_i\rightarrow \overline{V}\rightarrow X'\rightarrow X\}_i$$
where $\{V_i\rightarrow \overline{V}\}_i$ is a Zariski cover, $\overline{V}\rightarrow X'$ is a modification and $X'\rightarrow X$ is a finite morphism.
\end{lemma}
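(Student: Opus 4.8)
The plan is to start from Voevodsky's structure theorem for $\h$-covers and then clean up the finite/proper part using Stein factorization. First I would invoke \cite[Thm.~3.1.1]{Voevodsky1996} (or its companion \cite[Thm.~3.1.9]{Voevodsky1996}), which tells us that any $\h$-cover $\{U_i \xrightarrow{p_i} X\}_i$ admits a refinement of the form $\{V_i \to Y \to X\}_i$, where $\{V_i \to Y\}_i$ is a Zariski (indeed open) cover and $Y \to X$ is a proper morphism which is an $\h$-cover, hence surjective. Passing to a further refinement we may assume $X$ is reduced, and by working over each connected component of $X$ we may assume $X$ is integral; replacing $Y$ by a suitable union of its irreducible components dominating $X$ we may assume $Y \to X$ is a surjective proper morphism of integral (or at least reduced equidimensional) schemes.

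Next I would apply Stein factorization to the proper morphism $g\colon Y \to X$: it factors as $Y \xrightarrow{g'} X' \xrightarrow{h} X$ where $g'$ has geometrically connected fibers (and $g'_* \OO_Y = \OO_{X'}$) and $h\colon X' \to X$ is finite. Since $g$ is surjective, so is $h$, and therefore $X' \to X$ is a finite surjective morphism as required. The remaining point is to check that $g'\colon Y \to X'$ qualifies as a modification, i.e. is proper and an isomorphism over a dense open. Properness of $g'$ is automatic from properness of $g$ and separatedness of $h$. For the birational part: $h$ is finite surjective, so $X'$ is integral with function field a finite extension $L/k(X)$; over the generic point of $X'$ the fiber of $g'$ is geometrically connected and (being the generic fiber of a proper morphism with connected fibers over an integral base) is in fact the spectrum of $L$ itself, so $g'$ is birational, hence an isomorphism over some dense open $U \subseteq X'$. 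Thus $\overline{V} := Y \to X'$ is a modification, and $\{V_i \to \overline{V} \to X' \to X\}_i$ is the desired refinement.

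The main obstacle I anticipate is bookkeeping rather than a deep difficulty: one must be careful that the reduction to $X$ integral does not cost us, since $\h$-covers and refinements behave well under passing to components and reductions, and that "modification" is used in the sense intended in the paper (a proper morphism that is an isomorphism over a dense open subscheme of the target). A second minor subtlety is that Voevodsky's theorem naturally produces the proper map $Y \to X$ together with the Zariski cover on top; one should make sure the ordering $\{V_i \to \overline V \to X' \to X\}$ with the finite map innermost is exactly what Stein factorization delivers, which it does. Since everything here is standard, I would keep the write-up short and simply cite \cite[Thm.~3.1.1]{Voevodsky1996} and the standard properties of Stein factorization, noting that the connectedness of the generic fiber forces $g'$ to be birational.
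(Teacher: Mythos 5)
Your strategy---Voevodsky's normal form for $\h$-covers followed by Stein factorization of the proper part---is exactly the route the paper intends; indeed the paper offers no written proof beyond the remark that the lemma ``follows from \cite[Thm.~3.1.1]{Voevodsky1996} and Stein factorization'', so your write-up is supplying precisely the details the authors leave to the reader.

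There is, however, one step whose stated justification does not hold. You claim that the generic fiber of $g'\colon Y\to X'$, ``being the generic fiber of a proper morphism with connected fibers over an integral base, is in fact the spectrum of $L$''. That implication is false in general: a $\mathbf{P}^1$-bundle over an integral base is proper with geometrically connected fibers, its Stein factorization is the identity on the base, and $g'$ is nowhere birational. The problem is that you retained only ``$Y\to X$ is proper surjective'' from Voevodsky's theorem, and properness plus surjectivity is not enough to make the Stein part a modification. What rescues the argument is the finer content of the normal form: it exhibits $Y\to X$ as a finite surjective morphism followed by a blow-up, hence as a \emph{generically finite} proper surjection. Once generic finiteness is carried through, the generic fiber of $g'$ is finite, connected, and has global sections $k(X')$ (pushforward commutes with the flat base change to the generic point), hence equals $\Spec k(X')$; combined with $g'_{*}\OO_Y=\OO_{X'}$ and the fact that a proper morphism quasi-finite over a point is finite over a neighbourhood, $g'$ is an isomorphism over a dense open of $X'$, i.e.\ a modification. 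So the gap is real but confined to that one sentence and is repaired by quoting the normal form in full rather than only its properness. Two smaller bookkeeping points: the reduction should be to \emph{irreducible} rather than connected components (the disjoint union of the reduced irreducible components is a cdp-, hence $\h$-cover, and a finite disjoint union of finite morphisms to $X$ is finite, so the componentwise refinements reassemble); and when you discard the components of $Y$ not dominating $X$ you should note the remaining ones still surject onto $X$, since finitely many non-dominant components have nowhere dense closed image while the image of $Y$ is all of $X$.
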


It induces a descent result which we hope is of independent interest. 

\begin{proposition}\label{prop:dvrHDescent} 
For a base scheme $S$   let $\F$ be an unramified \'etale sheaf on $\Reg(S)$. If $\F_{\dvr}$ is a $\qfh$-sheaf, then $\F_{\dvr}$ is also an $\h$-sheaf on $\Sch(S)$.
\end{proposition}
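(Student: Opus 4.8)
The plan is to observe that, by this stage, $\F_{\dvr}$ is already a sheaf for two topologies whose supremum is the $\h$-topology. On the one hand it is an $\eh$-sheaf by Proposition~\ref{HKKProp418} — strictly speaking one applies that proposition to the right Kan extension of $\F$ along $\Reg(S)\hookrightarrow\Sch(S)$, which is an \'etale sheaf by cocontinuity of the inclusion, is unramified since its restriction to $\Reg(S)$ is $\F$, and has $\F_{\dvr}$ as its own $\dvr$-extension. On the other hand it is a $\qfh$-sheaf by hypothesis. Now recall the general principle that a presheaf which is a sheaf for each of two Grothendieck topologies is a sheaf for the topology they generate: the associated sheaf for the generated topology can be built by transfinitely iterating the plus construction over the union of the two families of covering sieves, and this process does not move a presheaf that is already a sheaf for both. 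Hence it suffices to know that the $\eh$- and $\qfh$-topologies generate the $\h$-topology on $\Sch(S)$, and this is exactly what Lemma~\ref{lemHRefine} supplies: every $\h$-cover of an $X\in\Sch(S)$ is refined by one of the form $\{V_i\to\overline V\xrightarrow{b} X'\xrightarrow{f} X\}_i$.

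So the concrete verification is to check the sheaf condition for $\F_{\dvr}$ on the three layers of this refinement. The Zariski cover $\{V_i\to\overline V\}$ is an $\eh$-cover, and $f\colon X'\to X$ is finite surjective, hence a $\qfh$-cover; these two layers are handled by the $\eh$- and $\qfh$-sheaf properties respectively, the latter being precisely where Proposition~\ref{prop:qfhDescent} does the work. For the modification $b\colon\overline V\to X'$ one sets $Z\subset X'$ to be the reduced closed locus over which $b$ fails to be an isomorphism; then $(\overline V,Z)$ is an abstract blow-up of $X'$, so $\{\overline V\sqcup Z\to X'\}$ is a $\cdp$-cover, a fortiori an $\eh$-cover, and $\F_{\dvr}$ satisfies descent for it. A Noetherian induction on $\dim X'$ then absorbs the lower-dimensional $Z$-contributions appearing in this descent datum — the restriction of $b$ to $b^{-1}(Z)\to Z$ is again a proper surjective, hence $\h$-, cover, so the inductive hypothesis applies on $Z$ — thereby reducing the sheaf condition for $b$ to the already-established facts, the base case $\dim X'=0$ being trivial since there $b$ is an isomorphism. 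Assembling the three layers gives the sheaf condition for the refinement, and hence, by the previous paragraph, $\F_{\dvr}$ is an $\h$-sheaf.

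The point demanding care — and the one I expect to be the real obstacle — is the modification layer: a proper birational morphism is in general only guaranteed to be an $\h$-cover, not an $\rh$- or $\eh$-cover, so one genuinely needs the abstract blow-up square $(\overline V,Z)$ together with the Noetherian induction rather than a direct appeal to $\rh$-descent for $\{b\}$ itself. Everything else is formal: the reduction via Lemma~\ref{lemHRefine} and the assembly of the sheaf condition from the three layers, whose only substantive input is the $\qfh$-descent of Proposition~\ref{prop:qfhDescent}.
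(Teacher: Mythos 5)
Your argument is correct and follows essentially the same route as the paper: refine an arbitrary $\h$-cover via Lemma~\ref{lemHRefine}, use that $\F_{\dvr}$ is an $\eh$-sheaf (Proposition~\ref{HKKProp418}) and a $\qfh$-sheaf (by hypothesis), and assemble the sheaf condition layer by layer. The paper delegates the assembly --- including the treatment of the modification layer via abstract blow-ups, which you carry out explicitly by Noetherian induction --- to \cite[Prop.~3.4.8(3)]{Kelly2012}, so your added detail fills in rather than departs from the published proof.
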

\begin{proof}
Any $\h$-cover $\U \to X$ may be refined as $\{ W_i \to X' \to X\}_{i\in I}$ where the first is an $\eh$-cover and the latter is a $\qfh$-cover by Lemma~\ref{lemHRefine}. By hypothesis $(\F_{\dvr})_{\qfh} \cong \F_{\dvr}$. Also the hypothesis and Theorem~\ref{HKKProp418} gives $(\F_{\dvr})_{\eh} \cong \F_{\dvr}$. The theorem now follows by \cite[Prop.~3.4.8(3)]{Kelly2012} with $\tau = \h$, $\rho = \qfh$, and $\sigma = \eh$. 
 \end{proof}

\begin{corollary}\label{Cor:WOmega_dvr-h-descent}
Let $k$ be a perfect field of characteristic $p>0$. For each $n \geqslant 0$, the sheaves $W\Omega^n_{\QQ,\dvr}$ are $\h$-sheaves on $\Sch(k)$. 
 \end{corollary}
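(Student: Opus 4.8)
The plan is to combine the $\qfh$-sheaf property established in Proposition~\ref{prop:qfhDescent} with the general mechanism of Proposition~\ref{prop:dvrHDescent}. First I would note that by Proposition~\ref{prop:Unramified} the presheaf $W\Omega^n_{\QQ}$ is unramified on $\Reg(k)$, and it is an \'etale sheaf by construction (it is the rationalization of an \'etale sheaf, see \cite{Illusie1979}); moreover it commutes with filtered colimits by \cite[I.1.10]{Illusie1979}. Hence $W\Omega^n_{\QQ}$ satisfies all the hypotheses needed to invoke Proposition~\ref{prop:dvrHDescent}.

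Next I would recall that Proposition~\ref{prop:qfhDescent} says precisely that $W\Omega^n_{\QQ,\dvr}$ is a $\qfh$-sheaf on $\Sch(k)$. Feeding this into Proposition~\ref{prop:dvrHDescent} with $S = \Spec k$ and $\F = W\Omega^n_{\QQ}$ immediately yields that $W\Omega^n_{\QQ,\dvr}$ is an $\h$-sheaf on $\Sch(k)$, for each $n \geqslant 0$. This is really just a matter of checking that the two inputs line up: the unramified \'etale sheaf hypothesis of Proposition~\ref{prop:dvrHDescent} and the $\qfh$-sheaf conclusion of Proposition~\ref{prop:qfhDescent} are exactly what the deduction needs.

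Since all the work has already been done in the preceding propositions, there is no genuine obstacle here; the only thing to be careful about is that the extension functor $(-)_{\dvr}$ as applied to $W\Omega^n_{\QQ}$ agrees with the one to which both propositions refer, which is guaranteed by Corollary~\ref{cor:wittdvr} and the unramifiedness from Proposition~\ref{prop:Unramified}. So the proof is a one-line citation: apply Proposition~\ref{prop:dvrHDescent} to $\F = W\Omega^n_{\QQ}$, using Proposition~\ref{prop:Unramified} for unramifiedness and Proposition~\ref{prop:qfhDescent} for the $\qfh$-sheaf hypothesis.
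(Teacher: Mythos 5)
Your proposal is correct and is essentially identical to the paper's own proof: both verify that $W\Omega^n_{\QQ}$ is an unramified \'etale sheaf via Proposition~\ref{prop:Unramified}, take the $\qfh$-sheaf property of $W\Omega^n_{\QQ,\dvr}$ from Proposition~\ref{prop:qfhDescent}, and conclude by Proposition~\ref{prop:dvrHDescent}. No issues.
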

 \begin{proof}
We have already seen that $W\Omega^n_{\QQ}$ is an unramified \'etale sheaf by Proposition~\ref{prop:Unramified} and by Proposition~\ref{prop:qfhDescent} a $\qfh$-sheaf as well. Thus Proposition \ref{prop:dvrHDescent} applies.
\end{proof}

\subsection{Topological Torsion}\label{sec:Tor}

For a presheaf $\F$ on $\Sch(S)$, we denote as in \cite{HuberKebekusKelly2016} by $\tor\F(X)$  the sections of $\F(X)$ which vanish on a dense open subscheme and we call a presheaf $\F$ torsion free if $\tor \F(X) = 0$. An important observation of \cite[Sec.~5]{HuberKebekusKelly2016} is that understanding the torsion of a presheaf $\F$ on $\Sch(S)$ is crucial to link $\F_{\dvr}$ with various sheafifications. Therefore, we will look at the torsion forms for $W \Omega^n_{\QQ}$ over a perfect field of positive characteristic. 

\begin{lemma}\label{RemTorReg} 
An unramified presheaf $\F$ has no torsion forms on $\Reg(S)$. 
\end{lemma}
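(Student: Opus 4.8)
The plan is to unwind the definitions and reduce to condition (ii) of Definition~\ref{def:Unramified}. Recall that for $X \in \Reg(S)$, a torsion form is a section $\omega \in \F(X)$ whose restriction to some dense open subscheme $U \subseteq X$ vanishes; we must show $\omega = 0$. Since $\F$ is by assumption unramified, in particular it is a Zariski sheaf, so we may work locally and assume $X$ is connected, hence irreducible (a connected regular scheme is irreducible). Then any dense open immersion $U \to X$ is automatically a \emph{dense} open immersion in the sense of condition (ii).

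First I would reduce to the connected case: write $X = \coprod_j X_j$ as a disjoint union of its connected components. By condition (i) of Definition~\ref{def:Unramified} (extended to arbitrary finite disjoint unions, or directly since $\F$ is a Zariski sheaf) the map $\F(X) \to \prod_j \F(X_j)$ is injective, and a dense open $U \subseteq X$ meets every component, so $U \cap X_j$ is dense open in $X_j$. Thus it suffices to treat each $X_j$, i.e.\ we may assume $X$ is connected and therefore irreducible since it is regular. Second, given a torsion form $\omega \in \F(X)$ vanishing on the dense open $U$, condition (ii) says precisely that $\F(X) \to \F(U)$ is injective for dense open immersions $U \to X$; since $\omega \mapsto 0$ under this map, we conclude $\omega = 0$. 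Hence $\tor \F(X) = 0$ for all $X \in \Reg(S)$.

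There is essentially no obstacle here: the statement is almost a tautology once one notices that ``torsion form'' is the failure of exactly condition (ii), modulo passing to connected components, which is handled by conditions (i) and the Zariski sheaf property. The only point requiring a word of care is the reduction to the irreducible case, so that an arbitrary dense open really does restrict to a dense open on each piece; this is immediate from the fact that a regular connected scheme is integral. I would write the argument in two or three sentences along exactly these lines.
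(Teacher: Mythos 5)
Your proof is correct and is essentially the paper's own argument: a torsion form is by definition a section killed by restriction to a dense open, and condition~(ii) of Definition~\ref{def:Unramified} says exactly that this restriction map is injective. The preliminary reduction to connected components is harmless but unnecessary, since condition~(ii) is stated for arbitrary dense open immersions $U \to X$ in $\Reg(S)$ and so applies directly.
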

\begin{proof}
Let $\F$ be an unramified sheaf on $\Reg(S)$ and $X\in\Reg(S)$. Suppose $\omega \in \F$ is torsion. This means that there is an open dense subscheme $U\hookrightarrow X$ such that $\omega\big\vert_U=0$. However, the second property for unramified presheaves implies that the induced morphism $\F(X)\rightarrow \F(U)$ is injective. Thus $\omega=0$.
\end{proof}

This means in particular that the sheaves $W\Omega^n$ and $W\Omega^n_{\QQ}$ have no torsion forms on $\Reg(k)$.

\begin{corollary}\label{Cor:Torsion}
Let $\F$ be an unramified presheaf on $\Reg(S)$. If $\F_{\dvr}$ is an $\h$-sheaf, then $\F_{\dvr}$ is torsion free.
\end{corollary}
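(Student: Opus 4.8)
The statement to prove is: if $\F$ is an unramified presheaf on $\Reg(S)$ and $\F_{\dvr}$ is an $\h$-sheaf, then $\F_{\dvr}$ is torsion free. The plan is to exploit the fact that a torsion section of $\F_{\dvr}$ over some $X$ becomes more visible once we $\h$-locally resolve $X$, together with the rigidity coming from Lemma~\ref{RemTorReg}: an unramified presheaf on $\Reg(S)$ has no torsion forms at all. So the key point is that being an $\h$-sheaf lets us detect sections of $\F_{\dvr}(X)$ on regular schemes $\h$-covering $X$, where torsion is forced to vanish.

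Concretely, let $X \in \Sch(S)$ and let $\omega \in \tor\F_{\dvr}(X)$, so $\omega$ vanishes on a dense open $U \hookrightarrow X$. I would first reduce to $X$ reduced and, by working on irreducible components (which, being finitely many closed subschemes whose union is $X$, form part of an $\h$-cover together with the normalization), assume $X$ is integral. By de~Jong's alteration theorem there is a proper surjective generically finite map $X' \to X$ with $X'$ regular (in fact one only needs a modification followed by a finite map, as in Lemma~\ref{lemHRefine}), and such a map is an $\h$-cover. Its restriction over the dense open $U$ remains an $\h$-cover, and one can arrange that the preimage $U'$ of $U$ in $X'$ is dense (since $X' \to X$ is dominant). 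Now because $\F_{\dvr}$ is an $\h$-sheaf, the map $\F_{\dvr}(X) \to \F_{\dvr}(X')$ is injective: indeed an $\h$-cover by a single map gives an equalizer diagram, and the two pullbacks to $X' \times_X X'$ agree a fortiori on the image of the injection, so injectivity follows, or more directly one uses that $\F_{\dvr}(X) \hookrightarrow \F_{\dvr}(X')$ because $\{X' \to X\}$ is an $\h$-cover and $\F_{\dvr}$ is separated for the $\h$-topology.

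The image $\omega' = \omega\big|_{X'} \in \F_{\dvr}(X') = \F(X')$ (the last identity by Remark~\ref{Rem:4.6}, since $X'$ is regular) then vanishes on the dense open $U' \subseteq X'$, so $\omega'$ is a torsion form on the regular scheme $X'$. By Lemma~\ref{RemTorReg}, $\omega' = 0$. Injectivity of $\F_{\dvr}(X) \to \F_{\dvr}(X')$ then gives $\omega = 0$, proving $\tor\F_{\dvr}(X) = 0$.

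The main obstacle is making precise the injectivity of $\F_{\dvr}(X) \to \F_{\dvr}(X')$ from the $\h$-sheaf property, i.e., checking that a single surjective $\h$-cover $X' \to X$ induces an injection on sections — this is the separatedness of $\F_{\dvr}$ for the $\h$-topology, which is immediate from the sheaf axiom. A secondary technical point is the reduction to the integral case and ensuring the chosen alteration (or the modification-then-finite refinement of Lemma~\ref{lemHRefine}) has regular source with the preimage of $U$ still dense; both are standard, the latter because a dominant map of integral schemes pulls back dense opens to dense opens.
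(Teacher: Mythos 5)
Your proposal is correct and follows essentially the same route as the paper: use de~Jong's alteration theorem to produce a regular $\h$-cover $X'\to X$, note that $\h$-separatedness makes $\F_{\dvr}(X)\to\F_{\dvr}(X')$ injective, and conclude because the pulled-back section is a torsion form on the regular $X'$, hence zero by Lemma~\ref{RemTorReg}. The only difference is your explicit reduction to integral $X$ before invoking de~Jong, a point the paper leaves implicit.
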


\begin{proof}
Since $\F_{\dvr}$ agrees with $\F$ on $\Reg(S)$ and $\F$ is torsion free on $\Reg(S)$ by the previous lemma, it is immediately clear that $\F_{\dvr}$ is torsion free on $\Reg(S)$. As  $\F_{\dvr}$ is $\h$-separated by assumption and alterations are $\h$-covers, it follows directly from de~Jong's alteration theorem that $\F_{\dvr}$ is torsion free on $\Sch(S)$. 

More precisely, let $X\in\Sch(S)$ and $\omega\in \F_{\dvr}(X)$ be a torsion form, meaning that there is an open dense subscheme $U\hookrightarrow X$ such that $\omega\big\vert_U=0$. One has  to show that $\omega=0$.  Because $\F_{\dvr}$ satisfies $\h$-descent, it suffices to show that there is an $\h$-cover on which $\omega$ vanishes. 

By de Jong's alteration \cite{deJong1996} theorem, there exists an alteration $\alpha:X'\rightarrow X$, such that $X'$ is regular. Let $U'$ be the preimage of $U$ under $\alpha$, which is a dense open in $X'$. The hypothesis $\omega\big\vert_U=0$ implies that $\alpha^\ast\omega \big\vert_{U'}=\omega_{X'} \big\vert_{U'}=0$. But as $X'$ is regular and $\F_{\dvr}$ torsion free on $\Reg(S)$ it follows, that $\alpha^\ast\omega=0$ and hence $\omega=0$.
\end{proof}

\begin{corollary}\label{Cor:torfree}
The sheaf $W\Omega^n_{\QQ,\dvr}$ is torsion free on $\Sch(k)$. 
\end{corollary}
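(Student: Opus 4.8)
The plan is to obtain this as an immediate formal consequence of Corollary~\ref{Cor:Torsion}, which has already done the conceptual work: it asserts that for any unramified presheaf $\F$ on $\Reg(S)$ whose extension $\F_{\dvr}$ is an $\h$-sheaf, the presheaf $\F_{\dvr}$ is automatically torsion free on all of $\Sch(S)$. So the task reduces to verifying that $\F = W\Omega^n_{\QQ}$, with $S = \Spec k$, satisfies the two hypotheses of that corollary.

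First I would cite Proposition~\ref{prop:Unramified}, which says precisely that $W\Omega^n_{\QQ}$ is an unramified presheaf on $\Reg(k)$; this is the first hypothesis. Then I would cite Corollary~\ref{Cor:WOmega_dvr-h-descent}, which states that $W\Omega^n_{\QQ,\dvr}$ is an $\h$-sheaf on $\Sch(k)$ for every $n \geqslant 0$; this supplies the second hypothesis. With both in place, Corollary~\ref{Cor:Torsion} applies word for word and yields that $W\Omega^n_{\QQ,\dvr}$ is torsion free on $\Sch(k)$.

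There is no real obstacle: the statement is a bookkeeping corollary assembled from results established earlier in the section. The only mild point of care is that the cited inputs are stated over the expected base — Proposition~\ref{prop:Unramified} and Corollary~\ref{Cor:WOmega_dvr-h-descent} are phrased for a perfect field $k$ of characteristic $p > 0$, matching the present hypothesis, while Corollary~\ref{Cor:Torsion} allows an arbitrary Noetherian base $S$, so specializing to $S = \Spec k$ is harmless. For orientation one may recall that, unwound, the torsion-freeness ultimately rests on de~Jong's alteration theorem: alterations are $\h$-covers, pulling a torsion form back along an alteration to a regular scheme kills it by Lemma~\ref{RemTorReg}, and the $\h$-separatedness of $W\Omega^n_{\QQ,\dvr}$ then forces the form itself to vanish; but all of this is already encapsulated in Corollary~\ref{Cor:Torsion}, so no new argument is needed here.
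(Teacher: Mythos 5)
Your proposal is correct and matches the paper's own proof, which likewise deduces the statement by combining Corollary~\ref{Cor:WOmega_dvr-h-descent} (the $\h$-sheaf property) with Corollary~\ref{Cor:Torsion}, the unramifiedness of $W\Omega^n_{\QQ}$ having been established in Proposition~\ref{prop:Unramified}. No further comment is needed.
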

\begin{proof}
As $W\Omega^n_{\QQ,\dvr}$ is an $\h$-sheaf by Corollary~\ref{Cor:WOmega_dvr-h-descent}, the previous corollary applies. 
\end{proof}

\section{Properties of $\h$-Witt differentials} 

We come now to the main result of this paper. Equipped with the findings from the previous section we can show that $W\Omega^n_{\QQ}$ satisfies $\h$-descent on regular schemes over a perfect field of positive characteristic. We also highlight several  consequences of this which are in line with some of the results in \cite{HuberJoerder2014} for $\Omega^n$ in characteristic $0$. 

\subsection{Regular $\h$-descent for Witt differentials}

As we have observed, $W\Omega^n$ and $W\Omega^n_{\QQ}$ as unramified presheaves are torsion free on regular schemes. The following general lemma and its corollary show how this property might help to understand the $\h$-sheafification of a Zariski sheaf. 

\begin{lemma}\label{lem:KillingTorsion} 
Let $\F$ be a Zariski sheaf on $\Sch(S)$ which is torsion free  on $\Reg(S)$. For $X \in \Sch(S)$ and $\omega \in \tor \F(X)$ there is an alteration $\alpha:  \widetilde{X} \to X$ such that $\alpha^{\ast}\omega$ vanishes in $\F(\widetilde{X})$. 
\end{lemma}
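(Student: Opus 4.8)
The plan is to use de~Jong's alteration theorem to find a regular model and then exploit torsion-freeness on regular schemes. First I would apply de~Jong's alteration theorem \cite{deJong1996} to obtain an alteration $\alpha \colon \widetilde{X} \to X$ with $\widetilde{X}$ regular. Since $\alpha$ is an alteration, it is proper, surjective, and generically finite; in particular there is a dense open $U \subseteq X$ over which $\alpha$ restricts to a finite flat (hence surjective) morphism, and $U' := \alpha^{-1}(U)$ is dense open in $\widetilde{X}$.

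Next I would track the torsion form through the pullback. By hypothesis $\omega \in \tor\F(X)$ means there is a dense open $V \hookrightarrow X$ with $\omega\big|_V = 0$. Shrinking if necessary we may assume $V \subseteq U$, so $V' := \alpha^{-1}(V)$ is dense open in $\widetilde{X}$. Functoriality of $\F$ (it is a presheaf on $\Sch(S)$, not merely on $\Reg(S)$) gives $\alpha^\ast \omega \in \F(\widetilde{X})$ with $(\alpha^\ast\omega)\big|_{V'} = (\omega\big|_V)$ pulled back $= 0$. Thus $\alpha^\ast\omega \in \tor\F(\widetilde{X})$.

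Finally, since $\widetilde{X}$ is regular and $\F$ is torsion free on $\Reg(S)$, we conclude $\alpha^\ast\omega = 0$ in $\F(\widetilde{X})$, which is exactly the claim.

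The main subtlety to address is that $\F$ is only assumed to be a Zariski sheaf on $\Sch(S)$, so one should be a little careful about what ``torsion free on $\Reg(S)$'' buys us: it is the vanishing of sections supported on a proper closed subset, which is precisely what applies to $\alpha^\ast\omega$ once we know its restriction to the dense open $V'$ is zero. No finiteness or flatness of $\alpha$ is actually needed for this argument — only that $\alpha$ is dominant with regular source and that $\widetilde{X}$ has a dense open pulled back from $X$, both of which are immediate from de~Jong's theorem. So in fact there is no real obstacle here; the lemma is a direct packaging of de~Jong plus the definition of torsion-freeness, and the only thing to get right is the bookkeeping of dense opens under the alteration.
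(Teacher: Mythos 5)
Your proof is correct and follows essentially the same route as the paper: pull $\omega$ back along a regular alteration, note that the preimage of the dense open where $\omega$ vanishes is dense open in $\widetilde{X}$, and conclude by torsion-freeness on $\Reg(S)$. The extra remarks about finiteness/flatness being unnecessary are accurate but not needed; the argument matches the paper's.
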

\begin{proof}
As $\omega$ is torsion, there is a Zariski open subset $U\subset X$ such that the restriction $\omega\big\vert_U=0$. If $\alpha:\widetilde{X}\rightarrow X$ is  a regular alteration, then $\alpha^\ast\omega\in \F(\widetilde{X})$ vanishes on $\alpha^{-1}U$, which is dense open in $\widetilde{X}$. But since $\widetilde{X}$ is regular, $\F(\widetilde{X})$ is torsion free by assumption and thus $\alpha^\ast\omega=0$. 
\end{proof}

\begin{corollary}\label{Cor:KillingTorsion}
Let $\F$ be a Zariski sheaf on $\Sch(S)$ which is torsion free on $\Reg(S)$. For $X\in\Sch(S)$, let $\omega\in\F(X)$ be an element which vanishes on the generic points of $X$. Then there is an $\h$-morphism $X'\rightarrow X$, such that $\omega\big\vert_{X'}=0$.
\end{corollary}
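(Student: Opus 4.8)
The plan is to deduce this directly from Lemma~\ref{lem:KillingTorsion}, together with the fact that alterations are $\h$-morphisms. The only discrepancy between the two statements is that Lemma~\ref{lem:KillingTorsion} assumes $\omega \in \tor\F(X)$, i.e.\ that $\omega$ vanishes on a \emph{dense open} subscheme, whereas here we merely assume $\omega$ vanishes at the generic points of $X$; so the first step is to promote the weaker hypothesis to the stronger one.

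First I would check that $\omega$ is in fact a torsion form. Since $X \in \Sch(S)$ is Noetherian it has only finitely many generic points $\eta_1,\dots,\eta_r$ (the generic points of its irreducible components). Saying that $\omega$ vanishes at $\eta_i$ means that its image in the stalk $\F_{\eta_i} = \varinjlim_{\eta_i \in U}\F(U)$ is zero, so there is a Zariski open $U_i \ni \eta_i$ with $\omega\big\vert_{U_i} = 0$. Put $U = \bigcup_{i=1}^r U_i$: this is open and meets every irreducible component of $X$, hence is dense, and $\{U_i\}$ is a Zariski cover of $U$ on which $\omega$ restricts to $0$. As $\F$ is a Zariski sheaf (separatedness already suffices), it follows that $\omega\big\vert_U = 0$, so $\omega \in \tor\F(X)$.

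Now apply Lemma~\ref{lem:KillingTorsion}: there is an alteration $\alpha\colon \widetilde{X} \to X$ with $\alpha^{\ast}\omega = 0$ in $\F(\widetilde{X})$. Since $\alpha$ is proper and surjective it is a universal topological epimorphism of finite type, i.e.\ an $\h$-morphism (as recalled after de~Jong's theorem), so $X' := \widetilde{X}$ has the required property. There is essentially no serious obstacle here: all the real content sits in Lemma~\ref{lem:KillingTorsion} and de~Jong's alteration theorem. The one point worth spelling out is the equivalence, for a Noetherian scheme and a Zariski sheaf, between ``vanishing at the generic points'' and ``vanishing on a dense open'', which rests on the finiteness of the set of generic points and the separatedness of $\F$ as above; the reducible or non-reduced case needs no extra care since it is already absorbed into Lemma~\ref{lem:KillingTorsion}.
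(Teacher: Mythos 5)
Your proof is correct and follows essentially the same route as the paper: reduce the hypothesis that $\omega$ vanishes at the generic points to the statement that $\omega\in\tor\F(X)$, and then invoke Lemma~\ref{lem:KillingTorsion} together with the fact that alterations are $\h$-covers. The only cosmetic difference is that the paper first passes $\h$-locally to the irreducible components of $X_{\mathrm{red}}$ before making this identification, whereas you carry it out directly on $X$ using the separatedness of the Zariski sheaf $\F$ over the dense open union of neighbourhoods of the generic points; both are fine.
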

\begin{proof}
Without loss of generality we may assume that $X$ is integral by considering each irreducible component of $X$ separately and using the fact that each scheme is $\h$-locally reduced. In this case, the hypothesis on $\omega$ means that $\omega$ is torsion on $X$ and we can now apply Lemma~\ref{lem:KillingTorsion}.
\end{proof}

We are now ready to give the  descent theorem for Witt differentials.

\begin{theorem}\label{dRWSmHDescent}
Let $k$ be a perfect field of positive characteristic $p$. For any $X\in\Reg(k)$, the change of topology maps induce  isomorphisms $W\Omega^n_{\QQ}(X)\cong W\Omega^n_{\QQ,\h}(X)$ for all $n\geqslant 0$.
\end{theorem}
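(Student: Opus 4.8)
The plan is to run a diagram chase combining the $\h$-sheafification $W\Omega^n_{\QQ,\h}$, the intermediate sheaf $W\Omega^n_{\QQ,\eh}$, and the $\dvr$-extension $W\Omega^n_{\QQ,\dvr}$, all connected by the canonical comparison maps of Remark~\ref{Rem:4.6}. Fix $X\in\Reg(k)$. By Remark~\ref{Rem:4.6} the composite $W\Omega^n_{\QQ}(X)\to W\Omega^n_{\QQ,\dvr}(X)$ is an isomorphism, and it factors through $W\Omega^n_{\QQ,\h}(X)$ via the chain
$$
W\Omega^n_{\QQ}(X)\to W\Omega^n_{\QQ,\h}(X)\to W\Omega^n_{\QQ,\dvr}(X).
$$
Since the composite is an isomorphism, the first map is automatically injective and the second is surjective. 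Thus it suffices to prove that $W\Omega^n_{\QQ,\h}(X)\to W\Omega^n_{\QQ,\dvr}(X)$ is injective, for then the second map is an isomorphism and hence so is the first.

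To prove this injectivity, I would argue as follows. An element $\omega\in W\Omega^n_{\QQ,\h}(X)$ in the kernel of the map to $W\Omega^n_{\QQ,\dvr}(X)$ restricts to zero on every $Y\in\Dvr(X)$, in particular on the generic points of $X$ (after passing to irreducible components, which we may do since everything is $\h$-locally reduced as in Corollary~\ref{Cor:KillingTorsion}). Now $W\Omega^n_{\QQ,\h}$ is a Zariski sheaf which is torsion free on $\Reg(k)$: indeed on regular schemes it agrees with the unramified presheaf $W\Omega^n_{\QQ}$ by Remark~\ref{Rem:4.6}, which has no torsion forms by Lemma~\ref{RemTorReg}. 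Hence Corollary~\ref{Cor:KillingTorsion} supplies an $\h$-cover $X'\to X$ on which $\omega$ vanishes. But $W\Omega^n_{\QQ,\h}$ is an $\h$-sheaf and $X'\to X$ is an $\h$-cover, so $\omega=0$. This establishes injectivity and completes the diagram chase.

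Alternatively — and this is really the same argument packaged differently — one observes directly that $W\Omega^n_{\QQ,\h}$ is torsion free on all of $\Sch(k)$: it is an $\h$-sheaf, and for a torsion form $\omega$ one uses de~Jong's alteration theorem to find a regular alteration $\alpha\colon\widetilde X\to X$, on which $\alpha^\ast\omega$ vanishes generically, hence vanishes since $\widetilde X$ is regular and $W\Omega^n_{\QQ,\h}$ is torsion free there; since alterations are $\h$-covers, $\omega=0$. Combined with the factorization of the isomorphism $W\Omega^n_{\QQ}(X)\xrightarrow{\sim}W\Omega^n_{\QQ,\dvr}(X)$ through $W\Omega^n_{\QQ,\h}(X)$, injectivity of $W\Omega^n_{\QQ,\h}(X)\to W\Omega^n_{\QQ,\dvr}(X)$ follows because any kernel element is a torsion form (it dies on every $\dvr$ of $X$, so in particular generically).

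The substance of the theorem is entirely carried by the earlier sections: the identification of $W\Omega^n_{\QQ}$ as unramified (Proposition~\ref{prop:Unramified}), the $\qfh$-descent for $W\Omega^n_{\QQ,\dvr}$ (Proposition~\ref{prop:qfhDescent}), and the resulting $\h$-descent and torsion-freeness of $W\Omega^n_{\QQ,\dvr}$ (Corollary~\ref{Cor:WOmega_dvr-h-descent}, Corollary~\ref{Cor:torfree}). Given all of that, the only remaining obstacle is the bookkeeping in the diagram chase — specifically, checking that the comparison map $W\Omega^n_{\QQ,\h}(X)\to W\Omega^n_{\QQ,\dvr}(X)$ is injective — and this I expect to dispatch cleanly via Corollary~\ref{Cor:KillingTorsion} (or equivalently the alteration argument of Lemma~\ref{lem:KillingTorsion}). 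There is essentially no hard analytic or geometric step left; the work is in correctly assembling the pieces.
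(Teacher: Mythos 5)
Your overall strategy---factor the canonical map $W\Omega^n_{\QQ}\to W\Omega^n_{\QQ,\dvr}$ through $W\Omega^n_{\QQ,\h}$ using Corollary~\ref{Cor:WOmega_dvr-h-descent}, invoke Remark~\ref{Rem:4.6} to see that the composite is an isomorphism on regular $X$, and reduce to injectivity of $W\Omega^n_{\QQ,\h}(X)\to W\Omega^n_{\QQ,\dvr}(X)$---is exactly the paper's. But your injectivity step is circular. Both of your packagings apply Corollary~\ref{Cor:KillingTorsion} (respectively Lemma~\ref{lem:KillingTorsion}) to $\F=W\Omega^n_{\QQ,\h}$, which requires $W\Omega^n_{\QQ,\h}$ to be torsion free on $\Reg(k)$. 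You justify this by saying that on regular schemes $W\Omega^n_{\QQ,\h}$ agrees with the unramified presheaf $W\Omega^n_{\QQ}$ ``by Remark~\ref{Rem:4.6}''---but that remark only asserts that the composite $W\Omega^n_{\QQ}(X)\to W\Omega^n_{\QQ,\dvr}(X)$ is an isomorphism for regular $X$ (its chain runs through $\rh$, $\cdh$, $\eh$, $\dvr$ and never mentions $\h$); the identification $W\Omega^n_{\QQ,\h}(X)\cong W\Omega^n_{\QQ}(X)$ on regular $X$ is precisely the theorem you are proving. A priori the factorization only exhibits $W\Omega^n_{\QQ}(X)$ as a direct summand of $W\Omega^n_{\QQ,\h}(X)$, the complement being the kernel of the map to $W\Omega^n_{\QQ,\dvr}(X)$, and nothing yet rules out torsion sections living in that kernel. (There is also a smaller unaddressed point: restricting a section of $W\Omega^n_{\QQ,\h}$ to objects of $\Dvr(X)$, which are only essentially of finite type, needs the left Kan extension; the paper sidesteps this by testing on finite-type regular $V\in\Reg(X')$ and their generic points.)

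The missing move, which is how the paper closes the gap, is to first pass to an $\h$-cover $f\colon X'\to X$ on which $f^{\ast}\omega$ is represented by an honest section $\omega'\in W\Omega^n_{\QQ}(X')$; such a cover exists by the construction of the sheafification. Then $\omega'$ lies in the kernel of $W\Omega^n_{\QQ}(X')\to W\Omega^n_{\QQ,\dvr}(X')$, hence vanishes on all generic points of $X'$, and Corollary~\ref{Cor:KillingTorsion} now applies legitimately to $\F=W\Omega^n_{\QQ}$, whose torsion freeness on $\Reg(k)$ is genuinely known from Proposition~\ref{prop:Unramified} and Lemma~\ref{RemTorReg}. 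This yields an $\h$-cover of $X'$ killing $\omega'$, whence $f^{\ast}\omega=0$ by $\h$-separatedness of $W\Omega^n_{\QQ,\h}$, and so $\omega=0$. With that insertion your diagram chase is correct and coincides with the paper's proof; without it, the torsion-freeness you rely on has no independent source.
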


\begin{proof}
Since $W\Omega^n_{\QQ,\dvr}$ is an $\h$-sheaf by Corollary \ref{Cor:WOmega_dvr-h-descent}, we have by the universal property of sheafification a factorization
	\begin{equation}\label{MapsOfSheaves}
	W\Omega^n_{\QQ}\rightarrow W\Omega^n_{\QQ,\h}\rightarrow W\Omega^n_{\QQ,\dvr}
	\end{equation}
of the canonical morphism $W\Omega^n_{\QQ}\rightarrow W\Omega^n_{\QQ,\dvr}$. We show in two steps that this is actually an isomorphism on $\Reg(k)$.

The first step is to show, that the second map is a monomorphism. This follows by  a similar argument to \cite[Prop.~5.12]{HuberKebekusKelly2016}. Let $X\in\Sch(k)$ and let $\omega$ be a section in the kernel of $W\Omega^n_{\QQ,\h}(X)\rightarrow W\Omega^n_{\QQ,\dvr}(X)$. Choose an $\h$-cover $f:X'\rightarrow X$, such that $f^\ast\omega$ is in the image of $W\Omega^n_{\QQ}(X')\rightarrow W\Omega^n_{\QQ,\h}(X')$. This way we obtain an element $\omega'$ in the kernel of $W\Omega^n_{\QQ}(X')\rightarrow W\Omega^n_{\QQ,\dvr}(X')$, and have to show that it vanishes on some $\h$-cover of $X'$. By  Corollary~\ref{Cor:KillingTorsion}, it is enough to show that $\omega'$ is trivial on every generic point $x\in X'$. Such a point is isomorphic to the generic point of a regular scheme $V\in\Reg(X')$. Since $\omega'$ is trivial in $W\Omega^n_{\QQ,\dvr}(X')$ it vanishes by definition of the extension functor on every regular scheme over $X'$, specifically on $V$ as well as on its generic point and therefore on $x$.

For the second step, let $X\in\Reg(k)$. Apply the global section functor with $X$ to the the factorization (\ref{MapsOfSheaves}) to obtain
	$$
	W\Omega^n_{\QQ}(X)\rightarrow W\Omega^n_{\QQ,\h}(X)\hookrightarrow W\Omega^n_{\QQ,\dvr}(X).
	$$
By Remark~\ref{Rem:4.6} the canonical map $W\Omega^n_{\QQ}\rightarrow W\Omega^n_{\QQ,\dvr}$ is an isomorphism on regular schemes. Consequently, the above composition on $X$ is an isomorphism  where the second map is injective by the first step. This implies that the second map is surjective and therefore an isomorphism. It follows right away that the same is true for the first map.
\end{proof}

\begin{remark}
In the case when $n=0$ there are stronger $\h$-descent results known. Namely, the sheaf $W\OO_{\QQ}$ satisfies cohomological $\h$-descent on $\Sch(k)$ by \cite[Thm.~2.4]{BerthelotBlochEsnault2007}. This is even true for $W \OO$  after just inverting the Witt vector Frobenius \cite[Thm.~5.40]{BhattScholze2015}. This is unlike the situation for $\Omega^0=\OO$ in characteristic $0$, where one has such a result only for semi-normal schemes \cite[Prop~4.5]{HuberJoerder2014}.
\end{remark}

It is well known to experts that $\h$-sheaves on $k$ can be interpreted in terms of Zariski sheaves on smooth schemes. The reason is that by de~Jong's alteration theorem $k$-schemes are $\h$-locally smooth. More precisely we have the following statement.

\begin{proposition}
Let $k$ be a perfect field of characteristic $p>0$. For  a $k$-variety $X$ and an unramified Zariski sheaf $\F$ on $\Reg(k)$ which  satisfies regular $\h$-descent, we have an isomorphism
	$$ 
	\F_{\h}(X) \cong  \varprojlim_{Y \in \Reg(X)} \F(Y).
	$$
\end{proposition}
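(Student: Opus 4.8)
The plan is to construct the natural comparison map $\F_{\h}(X)\to\varprojlim_{Y\in\Reg(X)}\F(Y)$ and to show it is a bijection. Note first that $\Reg(X)\subseteq\Reg(k)$ because $X$ is of finite type over $k$, so every $\F(Y)$ is defined. For $Y\in\Reg(X)$ with structure morphism $g_Y\colon Y\to X$, compose the restriction $g_Y^{\ast}\colon\F_{\h}(X)\to\F_{\h}(Y)$ with the inverse of the regular $\h$-descent isomorphism $\F(Y)\xrightarrow{\sim}\F_{\h}(Y)$; since the change of topology map $\F\to\F_{\h}$ is natural on $\Reg(k)$, these are compatible along morphisms in $\Reg(X)$ and assemble into the desired map. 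I will use repeatedly that proper surjective morphisms of finite type are $\h$-covers --- in particular alterations, the canonical map $X_{\mathrm{red}}\to X$, and the map from the coproduct of the reduced irreducible components of a scheme --- that $\h$-covers are stable under base change and composition, and that the $\h$-sheaf $\F_{\h}$ is separated (hence sends single-morphism $\h$-covers to injections) and satisfies the equalizer condition for such covers. In particular, applying de~Jong's theorem to the reduced irreducible components, every $X\in\Sch(k)$ admits a \emph{regular alteration}: a proper surjective $\alpha\colon X'\to X$ with $X'$ regular, which is then an $\h$-cover with $X'\in\Reg(X)$.

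\emph{Injectivity.} Let $\omega\in\F_{\h}(X)$ map to $0$ and pick a regular alteration $\alpha\colon X'\to X$. By construction the $X'$-component of the image of $\omega$ is $\alpha^{\ast}\omega$ viewed in $\F(X')=\F_{\h}(X')$; as it vanishes, separatedness of $\F_{\h}$ along the $\h$-cover $\alpha$ gives $\omega=0$.

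\emph{Surjectivity.} Fix a compatible system $t=(t_Y)_{Y\in\Reg(X)}$ and a regular alteration $\alpha\colon X'\to X$. I first descend $t_{X'}\in\F(X')=\F_{\h}(X')$ along $\alpha$: this requires that the two pullbacks of $t_{X'}$ along the projections $p_1,p_2\colon X'\times_X X'\to X'$ agree in $\F_{\h}(X'\times_X X')$. Pick a regular alteration $\beta\colon Z'\to X'\times_X X'$; it is an $\h$-cover and $Z'\in\Reg(X)$ (via either projection followed by $\alpha$, which agree). The composites $p_i\circ\beta\colon Z'\to X'$ are morphisms in $\Reg(X)$ for $i=1,2$, so compatibility of $t$ gives $(p_i\beta)^{\ast}t_{X'}=t_{Z'}$; hence the two pullbacks of $t_{X'}$ agree in $\F_{\h}(Z')$, and separatedness along $\beta$ yields the cocycle condition. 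The equalizer property of $\F_{\h}$ for the cover $\alpha$ then produces $\omega\in\F_{\h}(X)$ with $\alpha^{\ast}\omega=t_{X'}$. To verify $\omega\mapsto t$, fix $Y\in\Reg(X)$, base-change $\alpha$ along $g_Y$, and pick a regular alteration $\gamma\colon W\to Y\times_X X'$; this gives an $\h$-cover $\delta\colon W\to Y$ and a morphism $\epsilon\colon W\to X'$, both over $X$, with $W\in\Reg(X)$ and $g_Y\delta=\alpha\epsilon$. Then in $\F_{\h}(W)=\F(W)$ one computes $\delta^{\ast}(\omega|_Y)=\omega|_W=\epsilon^{\ast}(\alpha^{\ast}\omega)=\epsilon^{\ast}t_{X'}=t_W=\delta^{\ast}t_Y$, using compatibility of $t$ along $\epsilon$ and along $\delta$. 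As $\delta$ is an $\h$-cover of the regular scheme $Y$, the injection $\F(Y)=\F_{\h}(Y)\hookrightarrow\F_{\h}(W)=\F(W)$ forces $\omega|_Y=t_Y$. Together with injectivity this shows the comparison map is an isomorphism.

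The main obstacle is this surjectivity argument, specifically the cocycle verification on $X'\times_X X'$: fibre products of regular $X$-schemes are in general neither regular nor reduced, so one cannot evaluate $\F$ on them and must pass to a further regular alteration; it is precisely the compatibility of the system $t$ over all of $\Reg(X)$ --- not merely regular $\h$-descent for a single scheme --- that makes the two pullbacks coincide there, and the same mechanism is what pins down the components of the descended section.
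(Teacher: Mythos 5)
Your proof is correct and follows essentially the same route as the paper's: the natural comparison map, injectivity via a regular $\h$-cover furnished by de~Jong together with separatedness of $\F_{\h}$, and surjectivity by descending $t_{X'}$ along a regular alteration after checking the cocycle condition on a further regular alteration of $X'\times_X X'$, then pinning down the remaining components via a regular cover of $Y\times_X X'$. The only cosmetic difference is that in the last step you invoke $\h$-separatedness of $\F_{\h}$ along $W\to Y$ where the paper appeals to unramifiedness of $\F$ for the injectivity of $\F(Y)\to\F(W)$; both are valid.
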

\begin{proof}
The proof follows almost  the same steps as \cite[Cor.~3.9]{HuberJoerder2014}. Explicitly, we may write the projective limit in the statement as
	$$
	 \varprojlim_{Y \in \Reg(X)} \F(Y) = \left\{(\alpha_{f}) \in\prod_{\substack{f \colon Y \to X \\ Y \text{regular}}} \F(Y)  \quad\bigg|\quad \forall \psi:\begin{aligned} \xymatrix{ Y' \ar[d]_\psi \ar[dr]^{f'} & \\ Y \ar[r]_f & X}\end{aligned} \Rightarrow \psi^\ast \alpha_{f}=\alpha_{f'} \right\}.
	$$
 By de Jong's alteration theorem \cite{deJong1996}, $X$ has a regular $\h$-cover. Hence there is a natural injection
	$$
	\F_{\h}(X)\hookrightarrow \varprojlim_{Y \in \Reg(X)} \F(Y)
	$$
because a section $\beta\in\F_{\h}(X)$ determines uniquely a compatible family $\beta_f:=f^\ast\beta\in\F_{\h}(Y)$ where $f:Y\rightarrow X$ with $Y$ regular varies. By  hypothesis, $\F$ has regular $\h$-descent, and hence $\beta_f\in \F_{\h}(Y) =  \F(Y)$ for all $Y \in \Reg(X)$. It remains to show that this natural injection is  surjective. 

We choose regular $\h$-covers $g:X'\rightarrow X$ and $h:X''\rightarrow X'\times_X X'$, and denote by $i:X'\times_X X'\rightarrow X$ the canonical map and by $\pr_i X'\times_X X'\rightarrow X'$, $i=1,2$, the projections, fitting into the diagram
	$$
	\xymatrix{X''\ar[dr]^h & & \\
	& X'\times_X X' \ar[r]^{\pr_2}\ar[d]^{\pr_1}\ar[dr]^i & X'\ar[d]^g\\
	& X'\ar[r]^g& X.}
	$$ 
For a compatible family $(\alpha_f)_{f}$, where $f$ runs over all $f \colon Y \to X$ and $Y \in \Reg(X)$, we thus have
$$(\pr_1\circ h)^\ast \alpha_g=\alpha_{i\circ h}=(\pr_2\circ h)^\ast \alpha_g.$$
Consequently, $\F_{\h}(X)$ has a unique section $\beta$ such that $\alpha_g=g^\ast\beta$. 

To show that $\beta$ is the desired preimage of $(\alpha_f)_{f}$, it suffices now to show the equality
	$$
	f^\ast(\beta)=\alpha_f
	$$
for any morphism $f:Y\rightarrow X$ with $Y$ regular. For this let $e:Y'\rightarrow Y\times_X X'$ be a regular $\h$-cover,  and denote by $j:Y\times_X X'\rightarrow X$ the canonical map, and by  $\pr_Y: Y\times_X X'\rightarrow Y$ and  $\pr_{X'}:Y\times_X X'\rightarrow X'$ respectively the projections, which again fit in a diagram
	$$
	\xymatrix{Y'\ar[dr]^e &&\\
	& Y\times_X X' \ar[r]^{\pr_{X'}} \ar[d]^{\pr_Y} \ar[dr]^j & X' \ar[d]^g\\
	& Y\ar[r]^f & X.}
	$$
We therefore have equalities
$$(\pr_Y\circ p)^\ast \alpha_f=\alpha_{j\circ p} = (\pr_{X'}\circ p)^\ast \alpha_g=(\pr_{X'}\circ p)^\ast g^\ast\beta=(\pr_Y\circ p)^\ast f^\ast\beta.$$
As $\F$ is unramified, the map $(\pr_Y\circ p)^\ast:\F(Y)\rightarrow \F(Y)$ is injective, and we can conclude that 
$\alpha_f=f^\ast\beta$. 
\end{proof}

\begin{corollary}\label{cor:h-decomp}
Let $k$ be a perfect field of characteristic $p>0$. For $X \in \Sch(k)$ and each $n\geqslant 0$, 
$$
W \Omega_{\QQ,\h}^n(X) \cong  \varprojlim_{Y \in \Reg(X)} W \Omega_{\QQ}^n(Y).
$$
\end{corollary}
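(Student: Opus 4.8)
The plan is to deduce this directly from the preceding proposition applied to the sheaf $\F = W\Omega^n_{\QQ}$. The first observation I would make is that the right-hand side $\varprojlim_{Y\in\Reg(X)} W\Omega^n_{\QQ}(Y)$ is, by the very definition of the extension functor, nothing but $W\Omega^n_{\QQ,\dvr}(X)$; so the corollary amounts to reading off the identification $W\Omega^n_{\QQ,\h}\cong W\Omega^n_{\QQ,\dvr}$ at the level of sections over an arbitrary $X\in\Sch(k)$, rather than only on regular schemes as in Theorem~\ref{dRWSmHDescent}.

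Concretely, I would check the two hypotheses that the preceding proposition requires of $\F$. First, $W\Omega^n_{\QQ}$ is an unramified Zariski sheaf on $\Reg(k)$: this is Proposition~\ref{prop:Unramified}, together with the elementary remark that an unramified presheaf is, by Definition~\ref{def:Unramified}, in particular a Zariski sheaf. Second, $W\Omega^n_{\QQ}$ satisfies regular $\h$-descent, that is, $W\Omega^n_{\QQ,\h}(X) = W\Omega^n_{\QQ}(X)$ for every $X\in\Reg(k)$; but this is precisely the statement of Theorem~\ref{dRWSmHDescent}. Both inputs are available because $k$ is assumed perfect of characteristic $p>0$ throughout this section.

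With both hypotheses verified, the proposition applies verbatim and yields the asserted isomorphism $W\Omega^n_{\QQ,\h}(X)\cong\varprojlim_{Y\in\Reg(X)} W\Omega^n_{\QQ}(Y)$ for every $X\in\Sch(k)$. I do not expect any genuine obstacle here: the argument is purely a matter of invoking the right earlier results in the right order. The only point worth flagging is a matter of terminology, namely that the proposition is phrased for a ``$k$-variety'' while the corollary speaks of $X\in\Sch(k)$; since in this paper a $k$-variety simply means an object of $\Sch(k)$ (possibly reducible or non-reduced), there is no gap, and one may in any case reduce to the integral case componentwise using that every scheme is $\h$-locally reduced, exactly as in the proof of Corollary~\ref{Cor:KillingTorsion}.
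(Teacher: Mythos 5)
Your proposal is correct and follows exactly the route the paper intends: the corollary is the preceding proposition applied to $\F = W\Omega^n_{\QQ}$, whose hypotheses are supplied by Proposition~\ref{prop:Unramified} (unramified Zariski sheaf) and Theorem~\ref{dRWSmHDescent} (regular $\h$-descent). Your side remarks identifying the right-hand side with $W\Omega^n_{\QQ,\dvr}(X)$ and addressing the variety-versus-scheme wording match what the paper itself notes immediately after the corollary.
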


\begin{remark}
Note that the limit appearing in the above proposition coincides in fact with the definition of the functor $(-)_{\dvr}$, so that one has in the situation of the corollary an identification
	$$
	W \Omega_{\QQ,\h}^n(X)\cong W \Omega_{\QQ,\dvr}^n(X).
	$$
\end{remark}

We end this section with a simple formula for the global sections of $W \Omega_{\QQ,\h}^n$ (c.f. \cite[Rem.~3.8]{HuberJoerder2014}).

\begin{proposition}\label{prop:regulardescentExact} 
Let $k$ be a perfect field of characteristic $p>0$. Then for $X \in \Sch(k)$, and regular $\h$-covers $X' \rightarrow X$ and  $X'' \rightarrow X' \times_X X'$, the sequence 
	\begin{equation}\label{SheafProperty4}
	0\rightarrow W \Omega_{\QQ,\h}^n(X) \rightarrow W \Omega_{\QQ}^n(X')\rightarrow W \Omega_{\QQ}^n(X'')
	\end{equation}
is exact for all $n\geqslant 0$.
\end{proposition}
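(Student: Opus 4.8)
The plan is to obtain this as a formal consequence of the $\h$-sheaf property of $W\Omega^n_{\QQ,\h}$ together with regular $\h$-descent (Theorem~\ref{dRWSmHDescent}). Write $q\colon X''\to X'\times_X X'$ for the given regular $\h$-cover and let $a=\pr_1\circ q$ and $b=\pr_2\circ q$ be the two induced maps $X''\to X'$, so that the second arrow in (\ref{SheafProperty4}) is $a^\ast-b^\ast$. Since $X'$ and $X''$ are regular, Theorem~\ref{dRWSmHDescent} provides canonical identifications $W\Omega^n_{\QQ}(X')\cong W\Omega^n_{\QQ,\h}(X')$ and $W\Omega^n_{\QQ}(X'')\cong W\Omega^n_{\QQ,\h}(X'')$, so it suffices to prove exactness of the analogous sequence with each term replaced by the corresponding value of the $\h$-sheaf $W\Omega^n_{\QQ,\h}$.

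The first step is to use that $\{X'\to X\}$ is a one-element $\h$-cover: the sheaf axiom for $W\Omega^n_{\QQ,\h}$ applied to this cover says exactly that
\begin{equation*}
0\to W\Omega^n_{\QQ,\h}(X)\to W\Omega^n_{\QQ,\h}(X')\xrightarrow{\ \pr_1^\ast-\pr_2^\ast\ }W\Omega^n_{\QQ,\h}(X'\times_X X')
\end{equation*}
is exact. The second step is to use that $\{X''\to X'\times_X X'\}$ is also an $\h$-cover; the separatedness half of the sheaf axiom then shows that $q^\ast\colon W\Omega^n_{\QQ,\h}(X'\times_X X')\to W\Omega^n_{\QQ,\h}(X'')$ is injective. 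Consequently, for $\omega\in W\Omega^n_{\QQ,\h}(X')$ one has $(a^\ast-b^\ast)\omega=q^\ast(\pr_1^\ast-\pr_2^\ast)\omega=0$ if and only if $(\pr_1^\ast-\pr_2^\ast)\omega=0$, i.e.\ the kernels of the two difference maps coincide. Feeding this back into the displayed exact sequence and invoking the identifications of the first paragraph yields the exactness of (\ref{SheafProperty4}).

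This argument is essentially bookkeeping, and the only genuinely needed input beyond the sheaf axiom is Theorem~\ref{dRWSmHDescent}. The point requiring care — and the reason the auxiliary regular cover $X''$ figures in the statement at all — is that $X'\times_X X'$ need not be regular, so one cannot directly rewrite its sections of $W\Omega^n_{\QQ,\h}$ as sections of $W\Omega^n_{\QQ}$; the cover $q$ serves only to transport the equalizer computation onto a regular scheme, where the two sheaves agree. One should also check that the second arrow of (\ref{SheafProperty4}) is indeed the difference $a^\ast-b^\ast$ of the two composites $X''\rightrightarrows X'$, as above, and not some other pair of maps.
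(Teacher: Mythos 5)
Your proof is correct and follows essentially the same route as the paper's: apply the sheaf axiom to the cover $X'\to X$ to get the equalizer sequence ending in $W\Omega^n_{\QQ,\h}(X'\times_X X')$, use separatedness along the cover $X''\to X'\times_X X'$ to replace that last term by $W\Omega^n_{\QQ,\h}(X'')$, and then invoke regular $\h$-descent on the regular schemes $X'$ and $X''$ to identify the $\h$-sheaf sections with $W\Omega^n_{\QQ}$. Your closing remark correctly identifies the role of $X''$, namely that $X'\times_X X'$ need not be regular.
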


\begin{proof} 
For the $\h$-cover $X' \rightarrow X$ we obtain by the sheaf condition an exact sequence
	$$
	0\rightarrow W \Omega_{\QQ,\h}^n(X)\rightarrow W \Omega_{\QQ,\h}^n(X')\rightarrow W \Omega_{\QQ,\h}^n(X'\times_X X').
	$$
 For the $\h$-cover $X''\rightarrow X'\times_X X'$, the induced map $W \Omega_{\QQ,\h}^n(X'\times_X X')\rightarrow W \Omega_{\QQ,\h}^n(X'')$ is injective, too,  by the same reasoning. Together, this gives an exact sequence
	$$
	0\rightarrow W \Omega_{\QQ,\h}^n(X)\rightarrow W \Omega_{\QQ,\h}^n(X')\rightarrow W \Omega_{\QQ,\h}^n(X'').
	$$
As $X'$ and $X''$ are regular,  $W \Omega_{\QQ,\h}^n(X')\cong W \Omega_{\QQ}^n(X')$ and $W \Omega_{\QQ,\h}^n(X'')\cong W \Omega_{\QQ}^n(X'')$ because of regular $\h$-descent and we obtain the desired exact sequence.
\end{proof}

\subsection{Basic properties and some examples}

Throughout this section fix a perfect field  $k$  of characteristic $p>0$. We investigate now some properties of $\h$-Witt differentials in the spirit of \cite[Prop.~4.2]{HuberJoerder2014}.  For $X \in \Sch(k)$, we denote by  $W \Omega_{\QQ,\h}^n|_X$  the push-forward of $W \Omega_{\QQ,\h}^n$ to the Zariski-site of $X$. Furthermore, if $j: X^{\text{reg}}\rightarrow X$ is the inclusion of the regular locus, we use the notation $W\Omega_{\QQ,X}^{[n]} := j_\ast W\Omega^n_{\QQ,X^{\text{reg}}}$.

\begin{proposition}\label{prop:BasicProperties}
For $X\in\Sch(k)$, the sheaf $W \Omega_{\QQ,\h}^n|_X$ satisfies the following properties.
	\begin{enumerate}
	\item The sheaf $W\Omega_{h,\QQ}^n|_X$ is a quasi-coherent torsion free sheaf of $W\OO_{\QQ,X}$-modules, 
	\item If $X$ is regular, then 
		$$
		W\Omega^n_{\QQ,\h}\big|_X\cong W\Omega^n_{\QQ,X}.
		$$
	\item Let $r:X_{\text{red}}\rightarrow X$ be the reduction. Then 
		$$
		r_\ast W\Omega^n_{\QQ,\h}\big|_{X_{\text{red}}} \cong W\Omega^n_{\QQ,\h}\big|_X.
		$$
	\item If $X$ is reduced, there is a natural injection
		$$
		W\Omega_{\QQ,X}^n/\text{torsion}\hookrightarrow W\Omega_{\QQ,\h}^n\big|_X.
		$$
	\item Let $X$ be normal and $j:X^{\text{reg}}\rightarrow X$ the inclusion of the regular locus. There is a natural injection 
		$$
		W\Omega_{\QQ,\h}^n\hookrightarrow W\Omega_{\QQ,X}^{[n]}.
		$$
	\item For $n>\dim(X)$, $W\Omega_{\QQ,\h}^n\big|_X=0$.
	\end{enumerate}
\end{proposition}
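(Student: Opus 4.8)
The six statements form a package modeled on \cite[Prop.~4.2]{HuberJoerder2014}, and most of them follow formally from what has already been established---namely Theorem~\ref{dRWSmHDescent} (regular $\h$-descent), Corollary~\ref{cor:h-decomp} (the identification $W\Omega^n_{\QQ,\h}(X)\cong\varprojlim_{Y\in\Reg(X)}W\Omega^n_{\QQ}(Y)$), the remark identifying this with $W\Omega^n_{\QQ,\dvr}$, and Corollary~\ref{Cor:torfree} (torsion-freeness of $W\Omega^n_{\QQ,\dvr}$). So the proof should be organized item by item, with the early items feeding the later ones.

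First I would dispatch (ii): for $X$ regular it is literally Theorem~\ref{dRWSmHDescent} applied Zariski-locally, since $W\Omega^n_{\QQ,\h}|_X$ and $W\Omega^n_{\QQ,X}$ are both Zariski sheaves agreeing on every open of a cover by regular affines. For (i), quasi-coherence: using Corollary~\ref{cor:h-decomp} resp.\ the identification with $W\Omega^n_{\QQ,\dvr}$, the restriction to the small Zariski site is a limit of quasi-coherent sheaves (pulled back along regular covers), but the cleaner route is to invoke Theorem~\ref{HKKProp418}/Corollary~\ref{Cor:WOmega_dvr-h-descent} together with the fact---already used in Section~\ref{sec:res}, where $W\Omega^n_{\QQ,\h}$ is called quasi-coherent---that $W\Omega^n_{\QQ,\dvr}$ restricted to small Zariski sites is quasi-coherent because it is so on regular schemes and the $\dvr$-construction is a limit over regular covers; the $W\OO_{\QQ,X}$-module structure is inherited from the module structure of $W\Omega^n_{\QQ}$ over $W\OO_{\QQ}$ on regular schemes and passes through the limit. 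Torsion-freeness in (i) is exactly Corollary~\ref{Cor:torfree}. For (iii): reduction $X_{\mathrm{red}}\to X$ is an $\h$-cover which is an isomorphism on all generic points (in fact a universal homeomorphism), so $\Reg(X)=\Reg(X_{\mathrm{red}})$ as categories, and Corollary~\ref{cor:h-decomp} gives the isomorphism directly; alternatively one quotes that every scheme is $\h$-locally reduced, as already done in the proof of Corollary~\ref{Cor:KillingTorsion}.

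For (iv), $X$ reduced: one wants $W\Omega^n_{\QQ,X}/\mathrm{torsion}\hookrightarrow W\Omega^n_{\QQ,\h}|_X$. The natural map $W\Omega^n_{\QQ,X}\to W\Omega^n_{\QQ,\h}|_X$ exists by sheafification; its kernel contains torsion sections since the target is torsion-free by (i). Conversely any section in the kernel restricts to zero on a regular (hence dense, as $X$ is reduced) $\h$-cover, and---working with the normalization, or with a dense regular open---one sees it must already be torsion on $X$; here I would mimic the argument in \cite[Prop.~4.2]{HuberJoerder2014}, reducing to generic points where $X$ is the spectrum of a product of fields and $W\Omega^n_{\QQ}$ of a field injects into its $\h$-sheafification by Theorem~\ref{dRWSmHDescent}. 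For (v), $X$ normal: since $X^{\mathrm{reg}}\to X$ is an open immersion whose complement has codimension $\geqslant 2$, every $Y\in\Reg(X)$ dominating $X$ factors generically through $X^{\mathrm{reg}}$, and the limit formula of Corollary~\ref{cor:h-decomp} maps to $W\Omega^n_{\QQ}(X^{\mathrm{reg}})=W\Omega^n_{\QQ,X}^{[n]}(X)$; injectivity follows because $W\Omega^n_{\QQ,\h}|_X$ is torsion-free and the map is an isomorphism over the dense open $X^{\mathrm{reg}}$, using part (ii).

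The main obstacle---and the only step requiring genuine input about the de~Rham--Witt complex rather than formal sheaf theory---is (vi): vanishing of $W\Omega^n_{\QQ,\h}|_X$ for $n>\dim X$. By Corollary~\ref{cor:h-decomp} it suffices to show $W\Omega^n_{\QQ}(Y)=0$ for all $Y\in\Reg(X)$, but $Y$ regular of finite type over $X$ need \emph{not} have dimension $\leqslant\dim X$---it can have larger dimension, e.g.\ $Y\to X$ with positive fiber dimension. So the naive bound fails, and I would instead argue as in \cite[Prop.~4.2(vi)]{HuberJoerder2014}: by de~Jong's alteration theorem choose a regular alteration $\widetilde X\to X$ with $\dim\widetilde X=\dim X$, so $W\Omega^n_{\widetilde X}=0$ for $n>\dim X$ by the classical bound on the de~Rham--Witt complex of a smooth variety (Illusie \cite{Illusie1979}, $W\Omega^n$ vanishes above the dimension); then a section of $W\Omega^n_{\QQ,\h}$ over $X$ restricts to zero on this $\h$-cover $\widetilde X\to X$, hence is zero by $\h$-descent. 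One must be a little careful that the relevant covers in the limit can be taken of the correct dimension---this is where the alteration, rather than the limit formula directly, does the work---and that $W\Omega^n$ for smooth $k$-schemes vanishes in degrees $n>\dim$, which is standard.
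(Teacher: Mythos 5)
Most of your argument tracks the paper's own proof quite closely: (ii) is indeed immediate from Theorem~\ref{dRWSmHDescent}; (iii) follows because $X_{\text{red}}\to X$ is an $\h$-cover (your observation that $\Reg(X)=\Reg(X_{\text{red}})$ is a fine alternative); (iv) and (v) are handled, as in the paper, by combining torsion-freeness with restriction to the dense regular locus and regular $\h$-descent; and your treatment of (vi) via a de~Jong alteration of the same dimension is a valid variant of the paper's argument, which instead reduces to a dense smooth open of $X_{\text{red}}$ and invokes torsion-freeness. Your warning that the limit formula of Corollary~\ref{cor:h-decomp} cannot be applied naively in (vi), because objects of $\Reg(X)$ may well have dimension exceeding $\dim X$, is correct and worth making explicit.

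The genuine gap is in the quasi-coherence part of (i). Both routes you propose reduce to the assertion that an inverse limit of quasi-coherent sheaves indexed by the regular (or dvr) covers of $X$ is again quasi-coherent. That is not a valid inference: quasi-coherence is preserved by finite limits and arbitrary colimits, but not by infinite inverse limits (already an infinite product of quasi-coherent modules need not be quasi-coherent), and the index categories $\Reg(X)$ and $\Dvr(X)$ are not finite. The paper circumvents this by using Proposition~\ref{prop:regulardescentExact}: one chooses a single regular $\h$-cover $g\colon X'\to X$ and a regular $\h$-cover $X''\to X'\times_X X'$ with composite $\widetilde h\colon X''\to X$, arranges $g$ and $\widetilde h$ to be proper (hence quasi-compact and quasi-separated, so that $g_\ast$ and $\widetilde h_\ast$ preserve quasi-coherence), and then reads off from the exact sequence (\ref{SheafProperty4}) that $W\Omega^n_{\QQ,\h}\big|_X$ is the kernel of a morphism of quasi-coherent $\OO_X$-modules $g_\ast W\Omega^n_{\QQ}\to \widetilde h_\ast W\Omega^n_{\QQ}$, i.e.\ a finite limit of quasi-coherent sheaves. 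Some such finite presentation is needed; the bare limit formula does not deliver quasi-coherence.
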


\begin{proof}
To show (i),  we choose as before smooth $\h$-covers $g:X'\rightarrow X$ and $h:X''\rightarrow X'\times_X X'$, denote by $i:X'\times_X X'\rightarrow X$ the canonical map and set $\widetilde{h}=i\circ h$. Without loss of generality, we may assume that $g$ and $\widetilde{h}$ are proper. Hence they are quasi-compact and quasi-separated. It follows that the direct image sheaves $g_\ast W\Omega^n_{\QQ}$ and $\widetilde{h}_\ast W\Omega^n_{\QQ}$ are quasi-coherent, because this is the case for $W\Omega^n_{\QQ}$. According to the exact sequence (\ref{SheafProperty4}) $W\Omega_{h,\QQ}^n|_X$ is the kernel of a morphism $g_\ast W\Omega^n_{\QQ}\rightarrow \widetilde{h}_\ast W\Omega^n_{\QQ}$ of quasi-coherent sheaves, and thus  itself quasi-coherent. Torsion freeness  for $W\Omega^n_{\QQ,\dvr}=W\Omega^n_{\QQ,\h}$ was already shown in  Corollary~\ref{Cor:torfree}.  

Item (ii) follows immediately from regular $\h$-descent in Theorem~\ref{dRWSmHDescent}. Item (iii) is true as the inclusion of the reduced subscheme is an $\h$-morphism. 

For (iv) consider the natural map $	W\Omega^n_{\QQ,X}\rightarrow W\Omega^n_{\QQ,\h}\big|_X$. Since $W\Omega_{\QQ,\h}^n\big|_X$ is torsion free by (i), we may quotient out torsion in the domain to obtain 
	$$
	W\Omega^n_{\QQ,X}/\text{torsion}\rightarrow W\Omega^n_{\QQ,\h}\big|_X.
	$$
Let $j:X^{\text{reg}}\hookrightarrow X$ be the regular locus. By the definition of torsion freeness we have 
	$$
	W\Omega_{\QQ,\h}^n\big|_X\hookrightarrow  j_\ast W\Omega_{\QQ,\h}^n\big|_{X^{\text{reg}}}\cong j_\ast W\Omega^n_{\QQ,X^{\text{reg}}},
	$$
where the last isomorphism comes from (ii). Note that $j_\ast W\Omega^n_{\QQ,{X^{\text{reg}}}}$ is torsion free and that it agrees with $W\Omega^n_{\QQ,X}/\text{torsion}$ on ${X^{\text{reg}}}$. Hence
	$$
	W\Omega^n_{\QQ,X}/\text{torsion} \hookrightarrow j_\ast W\Omega^n_{\QQ,{X^{\text{reg}}}}
	$$
and the diagram $$\xymatrix{W\Omega_{\QQ,\h}^n\big|_X \ar@{^{(}->}[r] & j_\ast W\Omega^n_{\QQ,{X^{\text{reg}}}}\\ W\Omega^n_{\QQ,X}/\text{torsion} \ar[u] \ar@{^{(}->}[ru]&}$$ commutes, so that the vertical map is injective as well. 

Now we consider a normal scheme $X$  and let again $j:{X^{\text{reg}}}\hookrightarrow X$ be its regular locus. Because $W\Omega_{\QQ,\h}^n\big|_X$ is torsion free, the restriction $W\Omega_{\QQ,\h}^n(X)\hookrightarrow W\Omega_{\QQ,\h}^n(X^{\text{reg}}) \cong W\Omega_{\QQ}^n(X^{\text{reg}})$ is injective. As this is a local property (v) follows. 

For (vi), assume without loss of generality that $X$ is reduced, and by torsion freeness restrict to an open subset where it is smooth. The vanishing of $W\Omega^n_{\QQ,\h}\big|_X$ for $n>\dim(X)$ then follows from the vanishing of $W\Omega^n_{\QQ}$ for smooth schemes. 
\end{proof}

To conclude we record two corollaries  for mildly singular cases. 

\begin{corollary}
For $X\in\Sch(k)$ with normal crossings, there is an isomorphism
	$$
	W\Omega^n_{\QQ,\h}\big|_X \cong W\Omega^n_{\QQ,X}\slash{\text{torsion}}.
	$$
\end{corollary}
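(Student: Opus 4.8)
The plan is to reduce the assertion to an explicit local computation by means of the exact sequence of Proposition~\ref{prop:regulardescentExact}. Both sheaves in question are quasi-coherent (the left-hand side by Proposition~\ref{prop:BasicProperties}(i)), so the statement is Zariski-local on $X$; if the crossings are not simple one first passes to an étale neighbourhood, which changes nothing. We may therefore assume $X$ is in the standard model $X=\{x_1\cdots x_r=0\}\subseteq\mathbb{A}^d_k$, a union $X=\bigcup_{i=1}^{r}D_i$ of smooth affine components $D_i=V(x_i)$ meeting transversally; in particular $X$ is reduced, so the natural injection $W\Omega^n_{\QQ,X}/\text{torsion}\hookrightarrow W\Omega^n_{\QQ,\h}|_X$ of Proposition~\ref{prop:BasicProperties}(iv) is available, and the point is to show it is surjective.

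First I would make $W\Omega^n_{\QQ,\h}$ explicit near $X$. Let $\nu\colon\widetilde X=\coprod_{i=1}^{r}D_i\to X$ be the normalisation; it is finite and surjective, hence a $\qfh$-cover and in particular an $\h$-cover, and $\widetilde X$ is regular. The double fibre product is $\widetilde X\times_X\widetilde X=\coprod_{i,j}(D_i\times_X D_j)$, where the diagonal terms $D_i\times_X D_i$ equal $D_i$ and, for $i\neq j$, $D_i\times_X D_j=V(x_i,x_j)$ is smooth by transversality; thus $\widetilde X\times_X\widetilde X$ is again regular. Applying Proposition~\ref{prop:regulardescentExact} with $X'=\widetilde X$ and $X''=\widetilde X\times_X\widetilde X$ yields an exact sequence
$$
0\to W\Omega^n_{\QQ,\h}(X)\to\bigoplus_{i}W\Omega^n_{\QQ}(D_i)\xrightarrow{\ \mathrm{pr}_1^{\ast}-\mathrm{pr}_2^{\ast}\ }\bigoplus_{i,j}W\Omega^n_{\QQ}(D_i\times_X D_j)
$$
whose right-hand map vanishes on the $(i,i)$-components and sends a system $(\omega_k)_k$ to the one with $(i,j)$-entry $\omega_i|_{D_i\cap D_j}-\omega_j|_{D_i\cap D_j}$ for $i\neq j$. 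Hence $W\Omega^n_{\QQ,\h}(X)$ is precisely the group of compatible systems $(\omega_i)\in\bigoplus_i W\Omega^n_{\QQ}(D_i)$ with $\omega_i|_{D_i\cap D_j}=\omega_j|_{D_i\cap D_j}$ for all $i\neq j$; the same description holds over every open subset, so it computes the sheaf $W\Omega^n_{\QQ,\h}|_X$.

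It remains to identify this group of compatible systems with $W\Omega^n_{\QQ,X}(X)/\text{torsion}$. Restriction to the components gives a map $W\Omega^n_{\QQ,X}(X)\to\bigoplus_i W\Omega^n_{\QQ}(D_i)$ whose kernel consists of the forms vanishing on every $D_i$; since a form vanishing on a dense open of $X$ restricts to one vanishing on a dense open of each irreducible component $D_i$ and each $W\Omega^n_{\QQ}(D_i)$ is torsion free by Lemma~\ref{RemTorReg}, this kernel equals $\text{tor}\,W\Omega^n_{\QQ,X}(X)$, so the map factors through the injection of Proposition~\ref{prop:BasicProperties}(iv), whose image visibly lies in the group of compatible systems. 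The content of the corollary is thus the \emph{surjectivity}: every compatible system of rational Witt differentials on the components of the normal crossings scheme is the restriction of a rational Witt differential on $X$. This is the main obstacle. I would settle it by an explicit computation on the standard model $A=k[x_1,\dots,x_d]/(x_1\cdots x_r)$, using Illusie's description of $W\Omega^{\sbt}$ of polynomial rings and of their quotients together with the simplifications available after inverting $p$ (where $\Frob$ is invertible), so as to obtain a Mayer--Vietoris-type presentation of $W\Omega^n_{\QQ}(A)/\text{torsion}$; organising this as an induction on the number $r$ of components — peeling off $D_1$ and using the $\cdp$-square $D_1\sqcup V(x_2\cdots x_r)\to X$ together with torsion freeness — is a natural way to proceed. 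The argument parallels the characteristic-zero computation for Kähler differentials in \cite{HuberJoerder2014}, the de~Rham--Witt input replacing the de~Rham one.
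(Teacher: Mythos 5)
Your reduction to the standard local model, the description of $W\Omega^n_{\QQ,\h}(X)$ as compatible systems of forms on the components via the normalisation cover and Proposition~\ref{prop:regulardescentExact}, and the identification of the kernel of restriction-to-components with $\tor W\Omega^n_{\QQ,X}(X)$ are all correct, and this framing is a legitimate alternative to the paper's. But the proof stops exactly where the actual content of the corollary begins: the surjectivity statement --- that every compatible system $(\omega_i)$ on the components lifts to a section of $W\Omega^n_{\QQ,X}$ modulo torsion --- is not proved. You name it as ``the main obstacle'' and then describe a computation you \emph{would} do (Illusie's presentation of $W\Omega^{\sbt}$ of polynomial rings, a Mayer--Vietoris presentation of $W\Omega^n_{\QQ}(A)/\text{torsion}$), without carrying it out. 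That is a genuine gap, since without it nothing beyond the injection already supplied by Proposition~\ref{prop:BasicProperties}(iv) has been established.

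For comparison, the paper does not attack the full normalisation at once. It works \'etale-locally with $X$ a union of smooth hyperplane sections and inducts on the number of irreducible components: peeling off one component $Z$, setting $X'=\overline{X\setminus Z}$ and $E$ the preimage of $Z$, the abstract blow-up square $(X'\sqcup Z,E)$ gives an exact sequence
$$
0\to W\Omega^n_{\QQ,\h}\big|_X \to W\Omega^n_{\QQ,\h}\big|_{X'}\oplus W\Omega^n_{\QQ,\h}\big|_Z \to W\Omega^n_{\QQ,\h}\big|_E ,
$$
which is compared with the corresponding sequence of torsion-free quotients; the outer vertical maps are injections by (iv), the middle one is an isomorphism by (ii) and induction, and a diagram chase concludes once one knows the lower row is exact at the middle term --- a single, much more localised computation (two components meeting transversally) than the all-at-once surjectivity you defer. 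Your closing remark about ``peeling off $D_1$ and using the $\cdp$-square'' is precisely this strategy, so you have correctly identified the right architecture; to have a complete proof you would need to either execute that induction (including the exactness of the torsion-free row, which is where the de~Rham--Witt input actually enters) or genuinely perform the direct computation you sketch.
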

\begin{proof}
The proof is a direct translation of \cite[Prop.~4.9]{HuberJoerder2014} with their \cite[Prop.~4.2]{HuberJoerder2014} replaced by Proposition~\ref{prop:BasicProperties}. We repeat the argument for convenience of the reader. 

Since $X$ is reduced there is by Proposition~\ref{prop:BasicProperties}(iv) an injection 
	$$
	W\Omega_{\QQ,X}^n/\text{torsion}\hookrightarrow W\Omega_{\QQ,\h}^n\big|_X.
	$$
Working \'etale locally, one may assume that $X$ is a union of smooth hyperplane sections and use induction with respect to the number of irreducible components $c(X)$ of $X$. 

If $c(X)\leqslant 1$ then we are in the smooth case, and the statement is just Proposition~\ref{prop:BasicProperties}(ii). Thus let $c(X)>1$. Choose an irreducible component $Z\subset X$, let $X'=\overline{X\backslash Z}$ and let $E$ be the inverse image of $Z$ under the map $X'\rightarrow X$. Then $(X',Z)$ is an abstract blow-up of $X$ and by  \cite[Prop.~3.3]{HuberJoerder2014}  there is an exact sequence 
	$$
	0\rightarrow W\Omega^n_{\QQ,\h}\big|_X \rightarrow W\Omega^n_{\QQ,\h}\big|_{X'}\oplus W\Omega^n_{\QQ,\h}\big|_Z \rightarrow W\Omega^n_{\QQ,\h}\big|_E.
	$$
As in the classical case, we see that the pull-back of torsion free Witt differentials is again torsion free. 
Hence, the above sequence fits into a commutative diagram
	$$
	\xymatrix{ 0\ar[r] & W\Omega^n_{\QQ,\h}\big|_X \ar[r] & W\Omega^n_{\QQ,\h}\big|_{X'}\oplus W\Omega^n_{\QQ,\h}\big|_Z \ar[r]& W\Omega^n_{\QQ,\h}\big|_E\\
	& W\Omega^n_{\QQ,X}\slash{\text{torsion}} \ar[r] \ar@{^{(}->}[u] & W\Omega^n_{\QQ,X'}\slash{\text{torsion}}\oplus W\Omega^n_{\QQ,Z}\slash{\text{torsion}} \ar[r] \ar[u]^{\sim} & 		W\Omega^n_{\QQ,E}\slash{\text{torsion}} \ar@{^{(}->}[u]}
	$$
where the first and last vertical map are inclusions by (iv) of Proposition~\ref{prop:BasicProperties} and the middle vertical map is an isomorphism by (ii) and by induction respectively. 

By local calculations the second line in the diagram is also exact. Thus a diagram chase shows that the first vertical map is an isomorphism as well. 
\end{proof}

\begin{corollary}
Let $X\in\Reg(k)$ be quasi-projective with an action of a finite group $G$. Then one has 
	$$
	W\Omega^n_{\QQ,\h}\big|_{X/G}\cong (W\Omega^n_{\QQ,X})^G.
	$$
\end{corollary}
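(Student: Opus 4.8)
The plan is to reduce the statement about the quotient $X/G$ to an application of $\qfh$-descent (Proposition~\ref{prop:qfhDescent}), exactly in the spirit of how one handles $\Omega^n$ under finite group quotients in characteristic zero (cf.\ the analogous arguments in \cite{HuberJoerder2014} and \cite{HuberKebekusKelly2016}). First I would observe that since $X\in\Reg(k)$ is quasi-projective with a $G$-action, the quotient $Y:=X/G$ exists as a scheme in $\Sch(k)$, and the canonical map $\pi\colon X\to Y$ is finite and surjective. In particular $\pi$ is a $\qfh$-cover of $Y$ (finite surjective maps are $\qfh$-covers), and likewise the projections from $X\times_Y X$ (or rather from a normalization/reduction thereof) are $\qfh$-morphisms. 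Because $W\Omega^n_{\QQ,\h}=W\Omega^n_{\QQ,\dvr}$ on all of $\Sch(k)$ by Theorem~\ref{dRWSmHDescent} together with Corollary~\ref{Cor:WOmega_dvr-h-descent}, and this sheaf is a $\qfh$-sheaf by Proposition~\ref{prop:qfhDescent}, the sheaf condition for the cover $\pi$ gives an exact sequence
$$
0\rightarrow W\Omega^n_{\QQ,\h}(Y)\rightarrow W\Omega^n_{\QQ,\h}(X)\rightarrow W\Omega^n_{\QQ,\h}(X\times_Y X).
$$
Since $X$ is regular, Proposition~\ref{prop:BasicProperties}(ii) identifies $W\Omega^n_{\QQ,\h}(X)$ with $W\Omega^n_{\QQ,X}(X)=W\Omega^n_{\QQ}(X)$, so I would be left to identify the equalizer of the two pull-back maps $W\Omega^n_{\QQ}(X)\rightrightarrows W\Omega^n_{\QQ,\h}(X\times_Y X)$ with the $G$-invariants $(W\Omega^n_{\QQ}(X))^G$.

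For that identification, the key point is that $G$ acts on $W\Omega^n_{\QQ}(X)$ by functoriality, and an element $\omega\in W\Omega^n_{\QQ}(X)$ descends to $Y$ in the $\h$-topology (equivalently the $\qfh$-topology) precisely when it is $G$-invariant. One inclusion is immediate: the two pull-backs of an invariant $\omega$ agree already on the diagonal-dominated locus, and because $W\Omega^n_{\QQ,\h}$ is torsion free on $\Sch(k)$ (Corollary~\ref{Cor:torfree}) — hence injects into its sections over a dense open — equality on a dense open forces equality on $X\times_Y X$. For the reverse inclusion, I would use that the graphs $\Gamma_g\subset X\times_Y X$ of the elements $g\in G$ cover $X\times_Y X$ generically (since $X\to Y$ is generically a $G$-torsor), so an element in the equalizer restricts to something whose pull-back along $\Gamma_g$ computes $g^\ast\omega$; matching these against the two projections gives $g^\ast\omega=\omega$ for all $g$. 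Again torsion freeness of the relevant sheaves promotes this generic identity to an honest identity of sections. This shows $W\Omega^n_{\QQ,\h}(Y)\cong(W\Omega^n_{\QQ,X}(X))^G$, and running the same argument Zariski-locally on $Y$ (using that $W\Omega^n_{\QQ,\h}|_Y$ is a Zariski sheaf and that quotients localize) upgrades it to the claimed isomorphism of Zariski sheaves $W\Omega^n_{\QQ,\h}|_{X/G}\cong(W\Omega^n_{\QQ,X})^G$, where on the right one pushes forward to $X/G$ and takes invariants.

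The main obstacle I anticipate is the careful bookkeeping around $X\times_Y X$: in positive characteristic this fiber product need not be reduced or normal, and $\pi$ may be inseparable along some strata, so one genuinely needs to pass to the (normalization of the reduction of the) fiber product as in the proof of Proposition~\ref{prop:qfhDescent}, and to know that the graphs of the $g\in G$ still dominate the generic points of $X\times_Y X$ — i.e.\ that $X\to Y$ is generically a torsor, which holds on the locus where the $G$-action is free, a dense open since $X$ is integral and $G$ acts faithfully on each component. Once one is working generically, everything becomes a statement about the $\qfh$-sheaf $W\Omega^n_{\QQ}$ on $\Shdvr(k)$ and its Galois descent, which is already packaged in Proposition~\ref{prop:qfhDescent}; the torsion-freeness corollaries then bridge the gap between the generic statement and the global one. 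A secondary, more routine, point is to check that forming $(W\Omega^n_{\QQ,X})^G$ commutes with the Zariski localizations used, which follows from $W\Omega^n_{\QQ}$ being a Zariski sheaf together with $G$ acting through isomorphisms over $G$-stable opens.
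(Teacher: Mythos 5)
Your proposal is correct and follows essentially the same route as the paper: the paper simply invokes ``Galois descent'' for the finite generically Galois cover $X\to X/G$ (which is exactly the $\qfh$-sheaf condition for $W\Omega^n_{\QQ,\h}$ that you unwind via the equalizer over $X\times_{X/G}X$ and the graphs $\Gamma_g$), and then identifies $W\Omega^n_{\QQ,\h}\big|_X$ with $W\Omega^n_{\QQ,X}$ by Proposition~\ref{prop:BasicProperties}(ii). Your version just makes explicit the bookkeeping that the paper leaves implicit; note also that since $k(X)/k(X)^G$ is automatically a separable (Galois) extension by Artin's lemma, the inseparability worry you raise does not arise generically.
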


\begin{proof}
As $X/G$ is normal and the projection $X\rightarrow X/G$ is ramified with Galois group $G$, we have by Galois descent
	$$
	W\Omega^n_{\QQ,\h}\big|_{X/G}\cong \left( W\Omega^n_{\QQ,\h}\big|_{X} \right)^G
	$$
and by Proposition~\ref{prop:BasicProperties}(ii) $W\Omega^n_{\QQ,\h}\big|_{X}\cong W\Omega^n_{\QQ,X}$. 	
\end{proof}

\begin{remark}
Considering that $X/G$ is normal there is according to Proposition~\ref{prop:BasicProperties}(v) an inclusion 
	$$
	W\Omega^n_{\QQ,\h}\big|_{X/G}\subset W\Omega^{[n]}_{\QQ,X/G}.
	$$ 
In analogy to the classical case, one expects this to be an isomorphism.
\end{remark}

\end{document}